\Crefname{ALC@unique}{Step}{Lines} 
\def\BA{{\bf A}}
\def\BAT{{\bf A}^{\!\!\top}}
\def\RR{\mathbb{R}}
\def\S{\mathbb{S}}
\def\tn{\mathop{{\widetilde{n}}}}
\DeclareMathOperator{\rank}{rank}
\newtheorem{Assumption}{Assumption}[section]
\newtheorem{Example}{Example}[section]
\newtheorem*{remark}{Remark}
\title{Barrier and penalty methods for low-rank semidefinite programming with application to truss topology design\thanks{{This work has been supported by European Union’s Horizon 2020 research and innovation programme under the Marie Skłodowska-Curie Actions, grant agreement 813211 (POEMA)}}}
\author{Soodeh Habibi\thanks{School of Mathematics, University of Birmingham, United Kingdom} \and Arefeh Kavand\thanks{Applied Mathematics (Continuous Optimization), Friedrich-Alexander-Universität Erlangen-Nürnberg, Germany} \and Michal Ko\v{c}vara\thanks{School of Mathematics, University of Birmingham, United Kingdom and Institute of Information Theory and Automation, Prague, Czech Republic} \and Michael Stingl\thanks{Applied Mathematics (Continuous Optimization), Friedrich-Alexander-Universität Erlangen-Nürnberg, Germany}}
\date{\today}
\begin{document}

\maketitle
\begin{abstract}
    The aim of this paper is to solve large-and-sparse linear Semidefinite Programs (SDPs) with low-rank solutions. We propose to use a preconditioned conjugate gradient method within second-order SDP algorithms and introduce a new efficient preconditioner fully utilizing the low-rank information. We demonstrate that the preconditioner is universal, in the sense that it can be efficiently used within a standard interior-point algorithm, as well as a newly developed primal-dual penalty method. 
    The efficiency is demonstrated by numerical experiments using the truss topology optimization problems of growing dimension.
\end{abstract}
\begin{keywords}
Semidefinite optimization, interior-point methods, penalty methods, augmented Lagrangian methods, preconditioned conjugate gradients, truss topology optimization
\end{keywords}
\begin{AMS}
90C22, 90C51, 65F08, 74P05
\end{AMS}

\section{Introduction}\label{sec:intro}

First efficient solvers for semidefinite optimization emerged more than twenty years ago. All of the general purpose solvers	have been using second-order algorithms, predominantly the interior-point (IP) method. In every iteration of such an algorithm, one has to solve a system of linear equations 
\begin{equation*}\label{eq:eq}
  Hx = g
\end{equation*}
of size $n\times n$, where $n$ is the dimension of the unknown vector $x$ in the general semidefinite programming (SDP) problem
	\begin{equation*}
	\begin{aligned}
	& \min_{x\in\RR^{n}} c^\top x \quad
	 \mbox{subject to}
	\quad \sum_{i=1}^{{n}} x_i A_i^{(k)} - B^{(k)}\succeq 0\,,\quad k=1,\ldots,{p}
	\end{aligned}
	\end{equation*}
	with
	$A_i^{(k)}\!,\; B^{(k)}\in\RR^{{m}\times{m}}
	$.
The assembly and solution of this linear system is a well-known bottleneck of these solvers. This is the case even for problems with sparse data and sparse linear systems, when the assembly of the system requires $\mathcal{O} \left(pnm^3\right)$ flops, and the solution by sparse Cholesky factorization $\mathcal{O}\left(n^\alpha\right)$ flops with some $\alpha\in[1,3]$.

On the other hand, many real-world applications lead to large-scale SDP, often unsolvable by current general-purpose software. There are, essentially, three ways how to approach this conundrum:
\begin{enumerate}
	\item Reformulating the problem to make it more suitable for general-purpose solvers. This can be done, for instance, by facial reduction \cite{krislock2010explicit,permenter2018partial,zhu2019sieve} or by decomposition of large matrix inequalities (and thus reducing $m$) into several smaller ones \cite{kim2011exploiting,kovcvara2020decomposition}.
	\item Using a different algorithm, such as spectral bundle \cite{helmberg2000spectral}, ADMM \cite{oliveira2018admm}, augmented Lagrangian \cite{malick2009regularization,zhao2010newton}, optimization on manifolds~\cite{Journee}, randomized algorithms \cite{udell} or techniques of nonlinear programming \cite{Burer,burer-monteiro}.
	\item Using one of the second-order algorithms with Cholesky factorization replaced by an iterative method for the solution of the linear systems $Hx=g$; see, e.g., \cite{MKMS,toh2004solving,Zhang_2017}.
\end{enumerate}
The last approach is particularly attractive whenever $n\gg m$, and it addresses both bottlenecks of a second-order SDP solver:
\begin{itemize}
	\item The assembly of the system matrix; an iterative solver only needs matrix-vector multiplications, and so the matrix does not have to be explicitly assembled and stored.
	\item The solution of the linear system itself; an iterative solver can handle very large systems, as compared to a sparse Cholesky solver, under the assumption of good conditioning of the matrix or existence of a good preconditioner. 
\end{itemize}

Our goal in this paper is to design a preconditioner for the conjugate gradient (CG) method and use the resulting solver in two second-order SDP algorithms. The choice of an efficient preconditioner in a Krylov type method is problem dependent, in particular, in the context of optimization algorithms. We will focus on problems with expected very low-rank dual solutions. 

SDP problems with low-rank dual solutions are common in relaxations of optimization problems in different areas such as combinatorial optimization \cite{combinatorial}, approximation theory \cite{candes, lasserre}, control theory \cite{lasserre,zhangphdthesis}, structural optimization \cite{stingl2009sequential}, and power systems \cite{madani}. 

It is not our goal to find a low-rank dual solution in case of non-unique solutions; indeed, it is well-known that an interior-point method will converge to a maximal complementary solution. However, if the method will converge to a low-rank dual solution, we will use this fact to improve its computational complexity and memory demands. Our method is not only limited to problems with (theoretically exactly) low-rank solutions, it can be efficiently applied when the dual solution has a few outlying eigenvalues. Moreover, our method can be applied to problems with several linear matrix inequalities (LMIs) with different rank of the corresponding dual solutions and with bound constraints on the variable $x$, i.e., constraints that, mostly, do not exhibit low-rank duals.
	
There is one major difference to other algorithms for SDP problems with low-rank solutions, such as SDPLR \cite{Burer,burer-monteiro}, optimization on manifolds \cite{Journee} and other approaches \cite{bellavia,udell}. All these methods require the knowledge of a guaranteed (possibly tight) upper bound on the rank of the solution. If the estimate of the rank was lower than the actual rank, the algorithm would not find a solution. In our approach, the rank information is merely used to speed-up a standard, well-established algorithm. If our estimated rank is smaller than the actual one, we will only see it in possibly more iterations of the CG solver; the convergence behaviour of the optimization algorithm will be unchanged.

In this paper we present the application of the preconditioned iterative method in two second-order optimization algorithms, the ``standard'' primal-dual interior-point method and a new primal-dual augmented Lagrangian method (PDAL).
The interior-point algorithm mimics the reliable primal-dual algorithm with Nesterov-Todd direction, as used in the SDPT3 solver \cite{Todd-Toh,toh2004solving}.
%
	%
The augmented Lagrangian method is based on the PENNON algorithm in \cite{PENNON}, but implements a number of useful modifications. The most important change is to solve the subproblems in the augmented Lagrangian framework by a primal-dual approach inspired by the work of Griva and Polyak \cite{GrivaPolyak}. For this, we rephrase the Newton's system in the augmented Lagrangian algorithm by a primal-dual system which is then solved approximately by the preconditioned conjugate gradient (PCG) method. Additional modifications include the use of two different penalty functions for standard linear and LMI constraints---we consider a quadratic-logarithmic penalty function  \cite{YouzefoZibulevsky,BenTalZibulevsky,Zibulevsky,PHDThesisZibulevsky} for standard linear constraints and a hyperbolic penalty function \cite{PENNON,MKMS} to construct primary matrix function \cite{horn1991topics} for penalizing LMI constraints---as well as the application of a regularization term in the augmented Lagrangian function which bounds the eigenvalues of the Hessian matrix uniformly away from zero. 

Our preconditioner is motivated by the work of Zhang and Lavaei \cite{Zhang_2017}. They present a preconditioner for the CG method within a standard IP method that makes it more efficient for large-and-sparse low-rank SDPs. 
We partly follow their approach though there are some major differences: we abandon a crucial assumption on sparsity of the data matrices (Assumption 2 in \cite{Zhang_2017}); our method allows for
SDPs with more than one LMI and with (simple) linear constraints; the complexity of computing our preconditioner is substantially lower; we can also efficiently solve problems with a few outlying eigenvalues in the optimal solution.

To test the developed algorithms, we have generated a library of problems arising from truss topology optimization without and with vibration constraints, formulated as SDP. These problems are characterized by a very low-rank solutions (1--3), additional linear (bound) constraints and scalability, in the sense that one can generate a range of problems of various dimension but of the same type. Depending on the choice of the lower bound, the solutions are either of exact very low rank or an approximate one, with a very few outlying eigenvalues. We will demonstrate that, to solve the large-scale instances, both new elements of our engines have to be employed, the very use of an iterative solver (to address the matrix assembly and related memory issues) and an efficient preconditioner (to address the efficiency of the iterative solver). This is not always the case of the large scale SDP problems; for instance, the matrix completion problem requires the use of an iterative solver (within a second-order algorithm) but no preconditioner is needed in many instances.
	
The paper is organized as follows. 
Section~2 introduces the SDP problems we want to solve and some basic assumptions.
In Sections~3 and 4, PDAL and IP methods are presented. Also, in these sections, the complexity of the Schur complement system in IP and the Hessian system in PDAL 
are discussed. Next, in Section~5, we introduce the preconditioners. 
Section~6 describes our application, the truss topology optimization problems without and with vibration constraints.
In the last Section~7, we present two Matlab codes, PDAL and Loraine, implementing the developed algorithms, together with results of our numerical experiments.
We also give comparison with other existing SDP software and demonstrate the high efficiency of our algorithms.

\subsubsection*{Notation}
We denote by $\S^m,\S^m_+$ and $\S^m_{++}$, respectively, the space of $m\times m$ symmetric matrices, positive semidefinite and positive definite matrices.
	The eigenvalues of a given matrix $X\in \mathbb{S}_{++}^m$ are ordered as $\lambda_1(X)\leq\dots\leq \lambda_m(X)$, and its condition number is defined as $\kappa(X)=\lambda_1(X)/\lambda_n(X)$.
    The notation ``vec'' and ``$\otimes$" refer to the (non-symmetrized) vectorization and Kronecker product, respectively. 
    With this notation the following identity holds true: $\text{vec}(AXB^\top )=(A\otimes B)\text{vec}(X)$. 
    The symbol $\bullet$ denotes the Frobenius inner product of two matrices, $A \bullet B=\text{tr}(A^{\!\top} \!B)$.

\section{Low-rank semidefinite optimization}\label{sec:lr}
In the first part of the paper, we consider a (primal) semidefinite optimization problem
\begin{align}
    \label{eq:SDP}
        &\min_{X\in\S^m,\,x_{\text{lin}}\in\RR^m~} C \bullet X +d^\top x_{\text{lin}}\\
        &\text{{subject to}}\ \begin{aligned}[t]&A_i \bullet X+ (D^\top x_{\text{lin}})_i=b_i, \quad i=1,\dots,n\\
        &X\succeq 0,\  x_{\text{lin}}\geq 0
    \end{aligned}\nonumber
\end{align}
together with its Lagrangian dual
\begin{align}
    \label{eq:SDD}
        &\max_{y\in\RR^n,\;S\in\S^m,s_{\text{lin}}\in\RR^m~} b^\top y\\
        &\text{{subject to}}~\begin{aligned}[t]&\sum_{i=1}^{n}y_iA_i+S=C\\
        &Dy+s_{\text{lin}}=d\\
        &S\succeq 0\\
        &s_{\text{lin}}\geq 0\,.
    \end{aligned}\nonumber
\end{align}
Here $A_i\in\S^m$, $i=1,\ldots,n$, $C\in\S^m$, $b\in\RR^n$, $D\in\RR^{\nu\times n}$ and $d\in\RR^\nu$ are data of the problem. 

While the linear constraints can be formally included in the linear matrix inequality, we write them explicitly, as they will be treated differently in the algorithms below.
Later, in \Cref{sec:precon}, we will also use a more specific notation for problems with several matrix variables. 

\subsubsection*{Basic assumptions}
We make the following assumptions about \eqref{eq:SDP}.

\begin{Assumption}\label{assum:a2}
	Problems \cref{eq:SDP,eq:SDD} are strictly feasible, i.e., there exist $X\in\S^m_{++}$, $y\in\RR^n$, $S\in\S^m_{++}$, such that $A_i \bullet X=b_i $ and $\sum_{i=1}^{n}y_iA_i+S=C$  (Slater's condition).
\end{Assumption}
\begin{Assumption}\label{assum:a3}
	Define the matrix $\BA=[\text{\rm vec}A_1,\dots, \text{\rm vec}A_n]$. We assume that matrix-vector products with $\BA$ and $\BAT $ may each be computed in $\mathcal{O}(n)$ flops and memory.
\end{Assumption}
\begin{Assumption}\label{assum:a4} The inverse $(D^\top D)^{-1}$ and matrix-vector product with $(D^\top D)^{-1}$ may each be computed in $\mathcal{O}(n)$ flops and memory.
\end{Assumption}
\begin{Assumption}\label{assum:a5} The dimension of $X$ is much smaller than the number of constraints, i.e., $m\ll n$.
\end{Assumption}

\begin{remark}
    \Cref{assum:a4} is satisfied, for instance, in case of box constraints on variable $y$ in the dual formulation.
    \Cref{assum:a5} is not really needed as a ``mathematical'' assumption, all methods presented in the paper would work without it. However, the complexity of these methods would then be comparable with standard software using Cholesky factorization. The presented methods will be only superior to the standard software under this assumption.
\end{remark}

\subsubsection*{Low-rank assumption}
As mentioned in the Introduction, our approach is focused on linear systems with matrices having few large outlying eigenvalues. This, of course, includes matrices with very low rank but also matrices with ``approximate'' low rank. These matrices frequently appear in optimization algorithms, such as interior-point methods, when the ``exact'' low rank is only attained at the (unreached) optimum. Moreover, the approach allows us to solve problems with matrices of high rank but with a few large outlying eigenvalues, such as the problems in \Cref{sec:truss}.

Because our approach covers SDP problems with (genuinely) low-rank solutions and for the lack of better terminology for the larger class of matrices with outlying eigenvalues, we still call our main assumption ``low-rank'' and the problems we are targeting ``SDP problems with low-rank solutions''. 

Our main assumption used in the design of an efficient preconditioner concerns the eigenvalue distribution of the solution $X^*$ of \eqref{eq:SDP}. We will assume that $X^*$  has a very small number of large outlying eigenvalues. This will include problems where $X^*$ is of a very low rank.
\begin{Assumption}\label{assum:alr}
Let $X^*$ be the solution of \eqref{eq:SDP}. We assume that $X^*$ has $k$ outlying eigenvalues, i.e., that
$$
    (0\leq)\;\lambda_1(X^*)\leq\cdots\leq\lambda(X^*)_{m-k}\ll \lambda(X^*)_{m-k+1}\leq\cdots\leq\lambda(X^*)_m\,,
$$
where $k$ is very small, typically smaller than 10 and, often, equal to 1.
\end{Assumption}

\begin{remark}
In order to emphasize the difference of this paper to \cite{Zhang_2017}---that otherwise served as our motivation---we include the following remarks.
\begin{enumerate}
    \item We do not assume that the matrix $\BA^T \BA$ is easily invertible. Indeed, it is \emph{not} in our application in \Cref{sec:truss}, contrary to examples in \cite{Zhang_2017} where this matrix is diagonal.
    \item We do not assume anything about the distribution of the first $(m-k)$ eigenvalues of~$X^*$; they may not be equal to zero, they may not be clustered.
    \item We make no assumption about the ``rank'' of the solution $x^*_{\rm lin}$ of \eqref{eq:SDP}, i.e., about the number of active constraints in the linear inequality $Dy\leq d$ in \eqref{eq:SDD}. This can be arbitrarily large or small.
\end{enumerate}
\end{remark}

\section{Generalized augmented Lagrangian method}\label{Penalty/Barrier}
\subsection{Primal approach}
In this section, an augmented Lagrangian type algorithm for the solution of SDPs of type \cref{eq:SDD} is presented. Our method is based on the PENNON algorithm in \cite{PENNON}, but implements a number of useful modifications, which help to deal with ill-conditioned data. In our algorithm, we consider the LMI in \cref{eq:SDD} without slack, i.e., 
\begin{align}
A^{\rm{lmi}}(y)&=A_0(y)-C \preceq 0,\label{eq:LMI}
\end{align}
where $A_0(y)=\sum^n_{i=1}A_iy_i$. Moreover, we formally treat the linear constraints as a diagonal matrix inequality, i.e.,
\begin{align*}
A^{\rm{lin}}(y)&=\textrm{diag}(Dy-d) \preceq 0,
\end{align*}
and set $\tilde{b} := -b$ in order to turn \cref{eq:SDD} into a minimization problem. In a general setting, we may have many constraints of type \cref{eq:LMI}, however, for the sake of readability in this section, we assume that $A(y)$ is a block matrix with only two blocks 
\begin{align*}
A(y)=
\begin{pmatrix}
 A^{\rm{lmi}}(y) &0  \\
 0 & A^{\rm{lin}}(y)
 \end{pmatrix}.
\end{align*}
Here, $A^{\rm{lmi}}(y) \in \mathbb{S}^{m}$ and $A^{\rm{lin}}(y) \in \mathbb{S}^{\nu}$. Moreover $A^{\rm{lin}}(y)$ is a diagonal matrix for every $y \in  \mathbb{R}^n$ by construction, while we do not assume any particular structure for $A^{\rm{lmi}}(y)$. 

The Lagrangian for the resulting problem is defined by the formula
\begin{equation}
  L(y,X)=\tilde{b}^\top y+A(y) \bullet X = \tilde{b}^\top y+A^{\rm{lmi}}(y) \bullet X^{\rm{lmi}} + A^{\rm{lin}}(y) \bullet X^{\rm{lin}},
\end{equation}
where $X$ is a Lagrange multiplier in $\mathbb{S}^{m+\nu}_{+}$, which is again a block matrix composed from the blocks $X^{\rm{lmi}} \in \mathbb{S}^{m}_{+}$ and $X^{\rm{lin}} \in \mathbb{S}^{\nu}_{+}$. 
Next, a class of scalar-valued penalty functions is defined.
\begin{definition} \label{def:pen}
Let $\varphi:\mathbb{R}\to \mathbb{R}$ be a real-valued penalty function having the following properties \cite{Zibulevsky}:
\begin{enumerate}
    \item $\varphi$ is a strictly decreasing twice differentiable strictly convex function of one variable with Dom $\varphi=(b,\infty), -\infty\leq b <0$,
    \item
    $\lim_{t \rightarrow b} \varphi^{\prime} (t)=-\infty$,
    \item
    $\lim_{t \to \infty} \varphi^{\prime} (t)=0$,
    \item
    $\varphi(0)=0$,
    \item $\varphi^{\prime} (0)=-1$.
\end{enumerate}
\end{definition}
Using \cref{def:pen} and a positive penalty parameter $\pi$, we define a whole family of penalty functions
\begin{equation}
\varphi_\pi(t)=\pi\,\varphi\left(\frac{t}{\pi}\right),
\label{eq:fampen}
\end{equation}
for which we observe:
\[ \lim_{\pi\to 0} \varphi_\pi(t) =
  \begin{cases}
    \ 0,  & \quad t \leq 0, \\
    \ \infty,  & \quad t > 0.
  \end{cases}
\]
In this paper, to penalize positive diagonal values of $A^{\rm{lin}}(y)$, we use the so called \emph{quadratic-logarithmic} penalty function \cite{Zibulevsky,YouzefoZibulevsky,BenTalZibulevsky,PHDThesisZibulevsky}  which is given by 

\[ \varphi^{\rm{qlog}}(t) =
  \begin{cases}
       -\log(1-t),   & \quad  t\leq \tau,\\
   \frac{(1+\tau)^3(-t-\tau)^4}{4} +\frac{1}{3}\left(\frac{t+\tau}{1+\tau}\right)^3+\frac{1}{2}\left(\frac{-t-\tau}{1+\tau}\right)^2-\log(1+\tau),   & \quad  t > \tau,
  \end{cases}
\]
where $\tau$ is a positive number in which the logarithmic function is extrapolated in a twice continuously differentiable way by a quadratic function. 

For the linear matrix inequality constraints, i.e., the block $A^{\rm{lmi}}(y)$ we consider the \emph{hyperbolic} penalty function defined as:
\begin{equation}
    \varphi^{\rm{hyp}}(t)=-\frac{1}{t-1}-1.
    \label{eq:hyperbolic}
\end{equation}
It is easy to check that properties (1--5) of \cref{def:pen} are satisfied for both functions $\varphi^{\rm{qlog}}, \varphi^{\rm{hyp}}$ \cite{Zibulevsky,Polyak,DiplomaMS}.\\
Next we explain, how a penalty function from \cref{def:pen} can be used to penalize positive eigenvalues of $A^{\rm{lmi}}(y)$.
\begin{definition}
Let $\varphi:\mathbb{R}\to \mathbb{R}$ be a given function and $A\in \mathbb{S}^m$ be a given symmetric matrix. Let $A=S\Lambda S^\top$, with $\Lambda =diag(\lambda_1,\ldots,\lambda_m)^\top$ being an eigenvalue decomposition of $A$. Then the \emph{primary matrix function} $\Phi_\pi$ corresponding to $\varphi_\pi$ is defined by \cite{horn1991topics}:
\label{DefPrimarymatrix}
\end{definition}
\begin{align*}
\Phi_\pi : \mathbb {S}^m &\to \mathbb {S}^m\\
A&\to S
\begin{pmatrix}
 \varphi_\pi(\lambda_1) &0 & \cdots &0 \\
 0 &\varphi_\pi(\lambda_2)  & \, & \vdots \\
  \vdots  & \,  & \ddots & 0  \\
  0 & \cdots & 0 & \varphi_\pi(\lambda_m)
 \end{pmatrix}
S^\top.
\end{align*}


Using this, in principle, any scalar penalty function can be turned into a matrix penalty function. The main reason why we are using the hyperbolic penalty function \cref{eq:hyperbolic} is that for the evaluation of it, no eigenvalue decomposition is required. Instead, we can write \cite{PENNON}:
\begin{align*}
    \Phi^{\rm{hyp}}_\pi\left(A^{\rm{lmi}}(y)\right)&=\pi\left(-\left(\pi^{-1}A^{\rm{lmi}}(y)-I\right)^{-1}-I\right)=\pi^2\mathcal{Z}(y,\pi)-\pi I,
\end{align*}
where $\mathcal{Z}(y,\pi)=-(A^{\rm{lmi}}(y)-\pi I)^{-1}$ and $I$ is the identity matrix in $\mathbb{S}^{m}$. Note that we formally also use the formalism of primary matrix functions for the diagonal block $A^{\rm{lin}}(y)$ giving rise to a matrix penalty function $\Phi^{\rm{qlog}}_\pi$, but of course in this case the scalar penalty function can be directly applied to the diagonal entries of $A^{\rm{lin}}(y)$, which means that also here the eigenvalue decomposition can be avoided. In the following, we use the compact notation $\Phi_\pi(A(y))$ with $\pi = (\pi^{\rm{lin}},\pi^{\rm{lmi}})$ for the expression
\begin{align*}
\begin{pmatrix}
 \Phi^{\rm{hyp}}_{\pi^{\rm{lmi}}} (A^{\rm{lmi}}(y)) &0  \\
 0 & \Phi^{\rm{qlog}}_{\pi^{\rm{lin}}} (A^{\rm{lin}}(y))
 \end{pmatrix}.
\end{align*}

Using the penalty functions $\Phi^{\rm{qlog}}_\pi$ and $\Phi^{\rm{hyp}}_\pi$, we are now able to rewrite \cref{eq:LMI} as:
\begin{equation}
    A(y) \preceq 0\Leftrightarrow \Phi_\pi\left(A(y)\right) \preceq 0\Leftrightarrow \Phi^{\rm{hyp}}_{\pi^{\rm{lmi}}}\left(A^{\rm{lmi}}(y)\right) \preceq 0 \ \rm{and} \ \Phi^{\rm{qlog}}_{\pi^{\rm{lin}}} \left(A^{\rm{lin}}(y)\right) \preceq 0 .
    \label{eq:nonrescale}
\end{equation}
We note that this reformulation is sometimes referred to as \emph{nonlinear rescaling} \cite{GrivaPolyak,NonlinPolyak}. Using this and the Lagrange formalism, we can now define the \emph{generalized augmented Lagrangian} associated with \cref{eq:SDP} as:
\begin{align}
\label{eq:ModifiedLagrange1}
    & F(y,\bar{y},X,\pi^{\rm{lin}},\pi^{\rm{lmi}}) =\tilde{b}^\top y+\frac{r}{2}\|y-\bar{y}\|^2 + X \bullet \Phi_\pi\left(A(y)\right) \\
    &=\tilde{b}^\top y+\frac{r}{2}\|y-\bar{y}\|^2+X^{\rm{lmi}} \bullet \Phi^{\rm{hyp}}_{\pi^{\rm{lin}}} \left(A^{\rm{lmi}}(y)\right) + X^{\rm{lin}} \bullet \Phi^{\rm{qlog}}_{\pi^{\rm{lmi}}}\left(A^{\rm{lin}}(y)\right).\nonumber
\end{align}
In contrast to the PENNON algorithm \cite{PENNON}, in \cref{eq:ModifiedLagrange1}, we add a proximal point regularization term with a regularization parameter $r$, which provides a uniform lower bound for the eigenvalues of the Hessian $\nabla^2_{yy}F$, as will be seen later.

Next, the basic (primal) augmented Lagrangian algorithm is defined in \cref{PBMAlgorithm}.
\begin{algorithm}
\caption{Generalized Augmented Lagrangian Algorithm}
\label{PBMAlgorithm}
 Let $X_0\in \mathbb{S}^{m+\nu}_{++}$, $\pi^{\rm{lin}}_0>0$, $\pi^{\rm{lmi}}_0>0$ and $y_0\in \mathbb{R}^n$ be given.\\[-1em]
 \begin{algorithmic}[1]
 \For {$k=0,1,2,\ldots$}
   \State\label{a1l2}  Minimize \cref{eq:ModifiedLagrange1} with respect to $y$ with fixed $X_k$ and $\pi_k$, i.~e. compute $y_{k+1} \in \mathbb{R}^n$ such that: \begin{equation} 
       \nabla_y F(y_{k+1},y_k,X_k,\pi_k^{\rm{lin}},\pi_k^{\rm{lmi}})=0.
       \label{eq:argmin}
   \end{equation} 
    \State\label{a1l3} Update the multiplier $X_k$: 
    \medskip
    \begin{itemize}
        \item for the linear constraint block compute the diagonal multiplier $X^{\rm{lin}}_{k+1}$ by  
        \begin{equation}
        \widetilde{X}^{\rm{lin}}_{k+1}=\bar{X}^{\rm{lin}}(y_{k+1}, X^{\rm{lin}}_k, \pi_k^{\rm{lin}}),
        \end{equation}
        with the \emph{multiplier update function} $$\bar{X}_{i,j}^{\rm{lin}}(y,X,\pi)=
          \begin{cases}
    \ X_{i,i} \varphi^{\prime}_{\pi} (A_{i,i}^{\rm{lin}}(y)),  & \quad i = j, \\
    \ 0,  & \quad i \neq j;
  \end{cases}$$
         \item for the LMI block compute
        \begin{equation}
         \widetilde{X}^{\rm{lmi}}_{k+1}=\bar{X}^{\rm{lmi}}(y_{k+1}, X^{\rm{lmi}}_k, \pi_k^{\rm{lmi}}),
        \end{equation}
    with the \emph{multiplier update function} $\bar{X}^{\rm{lmi}}(y,X,\pi)=\pi^2\mathcal{Z}(y,\pi) X \mathcal{Z}(y,\pi)$, see \cite{PENNON};
        \label{eq:updatemulitiplier}
        \item damp multiplier update by 
         \begin{equation*}
         X^{\rm{lin}}_{k+1} = (1-\gamma^{\rm{lin}}) \widetilde{X}^{\rm{lin}}_{k+1} + \gamma^{\rm{lin}} \widetilde{X}^{\rm{lin}}_{k+1}, \quad X^{\rm{lmi}}_{k+1} = (1-\gamma^{\rm{lmi}}) \widetilde{X}^{\rm{lmi}}_{k+1} + \gamma^{\rm{lmi}} \widetilde{X}^{\rm{lmi}}_{k+1}
        \end{equation*}       
        using algorithmic parameters $0 \leq \gamma^{\rm{lin}}, \gamma^{\rm{lmi}} \leq 1$.
   \end{itemize}
\medskip
     \State\label{a1l4} Update the penalty parameters $\pi^{\rm{lin}}_{k}, \pi^{\rm{lmi}}_{k}$:
\medskip
    \begin{itemize}
        \item for the linear constraint block compute 
    \begin{equation*}
    \pi^{\rm{lin}}_{k+1}=\max(\pi^{\rm{lin}}_{\rm{min}}, \pi_{\rm{upd}}^{\rm{lin}} \pi^{\rm{lin}}_k);
    \end{equation*}
    \end{itemize}
    \begin{itemize}
        \item for the linear matrix inequality constraints
        \begin{equation*}
        \pi^{\rm{lmi}}_{k+1}=\max\left(\pi^{\rm{lmi}}_{\rm{min}}, \pi_{\rm{upd}}^{\rm{lmi}} \pi^{\rm{lmi}}_k, 1.01 \lambda_{\max}(A^{\rm{lmi}}(y_{k+1})) \right),
        \end{equation*}
        where $\lambda_{\max}(\cdot)$ denotes the largest eigenvalue of a matrix
    and $0<\pi_{\rm{upd}}^{\rm{lmi}},\pi_{\rm{upd}}^{\rm{lin}}<1$, $\pi^{\rm{lmi}}_{\rm{min}}, \ \pi^{\rm{lin}}_{\rm{min}} > 0$ are algorithmic parameters.
    \end{itemize}
    \EndFor
 \end{algorithmic}
\end{algorithm}
As usual in augmented Lagrangian methods, the Lagrange multiplier update in \cref{eq:updatemulitiplier} is motivated as follows.
From \cref{eq:argmin}, we have
\begin{align} \label{eq:gradaugy}
&    \nabla_y F(y_{k+1},y_k,X_k,\pi_k^{\rm{lin}},\pi_k^{\rm{lmi}})=\\ &\qquad\tilde{b}+r(y_{k+1}-y_k)+A^*\bar{X}(y_{k+1},X_k,\pi_k^{\rm{lin}},\pi_k^{\rm{lmi}})=0\,; \nonumber
\end{align}
here, $A^*$ is the adjoint operator to $A$ and $\bar{X}(y_{k+1},X_k,\pi_k^{\rm{lin}},\pi_k^{\rm{lmi}})$ is a block diagonal matrix with blocks $\bar{X}^{\rm{lin}}(y_{k+1},X_k^{\rm{lin}},\pi_k^{\rm{lin}})$ and $\bar{X}^{\rm{lmi}}(y_{k+1},X_k^{\rm{lmi}},\pi_k^{\rm{lmi}})$; see \cref{PBMAlgorithm}.
Now, inserting the updated multiplier $X_{k+1}$, the left hand side of the last equation becomes $\tilde{b}+r(y_{k+1}-y_k)+A^*X_{k+1}$. For $r=0$, this implies 
\begin{equation}
    \nabla_y L(y_{k+1},X_{k+1})=\tilde{b}+A^*X_{k+1}=0,
    \label{eq:gradLy}
\end{equation}
which means that $y_{k+1}$ is also stationary for the standard Lagrangian, if we set the proximal point parameter equal to zero. For positive $r$, this holds at least in an asymptotic sense, as $y_{k+1}-y_k \to 0$ for $k\to \infty$.



When we solve Step~2 of \cref{PBMAlgorithm} by Newton's method, \cref{PBMAlgorithm} is (apart from the proximal point regularization and the special treatment of the linear constraints) essentially equivalent to the PENNON algorithm described in \cite{PENNON}.
%
%
Before applying a few further modifications to \cref{PBMAlgorithm}, let us have a more detailed look at the Newton iteration. For this, we need to compute the gradient and Hessian of the augmented Lagrangian \cref{eq:ModifiedLagrange1} in each iteration. One easily verifies that the Hessian can be computed using the formula
%
\begin{align} \label{eq:Hessianaug}
    &H_k(y) :=\nabla^2_{yy}F(y,y_k,X_k,\pi_k^{\rm{lin}},\pi_k^{\rm{lmi}}) \\
    &=rI+ 2 \left[ A_i\bullet\bar{X}^{\rm{lmi}}(y,X_k^{\rm{lmi}},\pi_k^{\rm{lmi}}) A_j \mathcal{Z}(y,\pi_k^{\rm{lmi}})\right)]_{i,j=1}^n
   + D^\top \bar{W}^{\rm{lin}}(y,X_k^{\rm{lin}},\pi_k^{\rm{lin}})D \nonumber \\
    &= rI+ 2 \BAT \left(\bar{X}^{\rm{lmi}}(y,X_k^{\rm{lmi}},\pi_k^{\rm{lmi}}) \otimes \mathcal{Z}(y,\pi_k^{\rm{lmi}})\right)\BA + D^\top \bar{W}^{\rm{lin}}(y,X_k^{\rm{lin}},\pi_k^{\rm{lin}})D \nonumber
\end{align}
       with 
       \begin{equation}\label{eq:Wlin}
       \bar{W}_{i,j}(y,X,\pi)=
          \begin{cases}
    \ X_{i,i} \varphi^{\prime\prime}_{\pi} (A_{i,i}^{\rm{lin}}(y)),  & \quad i = j, \\
    \ 0,  & \quad i \neq j,
  \end{cases}
  \end{equation}
  $\BA$ being the $m\times n$ matrix defined in \cref{assum:a3} and $D$ being the $\nu \times n$ matrix defining the linear constraints.

Let us first look at the conditioning 
of the Hessian.
\begin{lemma}
    The eigenvalues of $H_k(y)$ are bounded from below by $r$. 
\end{lemma}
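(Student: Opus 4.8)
The plan is to show each of the three terms in the Hessian formula \cref{eq:Hessianaug} contributes a positive semidefinite matrix, except for the proximal term $rI$, which contributes exactly $r$ to every eigenvalue. Since the sum of a PSD matrix and $rI$ has all eigenvalues at least $r$, the claim follows. So the work reduces to verifying positive semidefiniteness of the two remaining pieces.

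First I would examine the linear-constraint term $D^\top \bar{W}^{\rm lin}(y,X_k^{\rm lin},\pi_k^{\rm lin}) D$. From \cref{eq:Wlin}, $\bar{W}^{\rm lin}$ is a diagonal matrix whose $(i,i)$ entry is $X_{i,i}\,\varphi''_{\pi}(A^{\rm lin}_{i,i}(y))$. The diagonal multiplier block $X_k^{\rm lin}$ stays positive throughout the algorithm (it is initialized in $\mathbb{S}^{m+\nu}_{++}$ and the multiplier update keeps the diagonal entries positive, since $\varphi'_\pi<0$ by property (1) of \cref{def:pen} and the damping is a convex combination of positive quantities), so $X_{i,i}>0$. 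Moreover $\varphi$ is strictly convex by property (1), hence $\varphi''_\pi>0$, so $\bar{W}^{\rm lin}$ is a diagonal matrix with positive entries, i.e. positive definite; therefore $D^\top \bar{W}^{\rm lin} D\succeq 0$. (If one wishes to be careful about whether $X_k^{\rm lin}$ could have a zero entry, one may simply note $\bar{W}^{\rm lin}\succeq 0$, which still suffices.)

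Next I would treat the LMI term $2\,\BAT\!\bigl(\bar{X}^{\rm lmi}(y,X_k^{\rm lmi},\pi_k^{\rm lmi})\otimes \mathcal{Z}(y,\pi_k^{\rm lmi})\bigr)\BA$. It is enough to show the inner matrix $\bar{X}^{\rm lmi}\otimes \mathcal{Z}$ is positive semidefinite, since then sandwiching by $\BA$ and $\BAT$ preserves this and scaling by $2$ does too. Here $\mathcal{Z}(y,\pi)=-(A^{\rm lmi}(y)-\pi I)^{-1}$, and because the penalty parameter update in \cref{a1l4} enforces $\pi^{\rm lmi}_{k+1}\ge 1.01\,\lambda_{\max}(A^{\rm lmi}(y_{k+1}))$, we have $A^{\rm lmi}(y)-\pi I\prec 0$ at the relevant iterates, so $\mathcal{Z}(y,\pi)=-(A^{\rm lmi}(y)-\pi I)^{-1}\succ 0$. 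For $\bar{X}^{\rm lmi}=\pi^2\,\mathcal{Z}\,X_k^{\rm lmi}\,\mathcal{Z}$: since $\mathcal{Z}$ is symmetric positive definite and $X_k^{\rm lmi}\succeq 0$ (again preserved by the algorithm, starting from $\mathbb{S}^{m}_{++}$), the congruence $\mathcal{Z}X_k^{\rm lmi}\mathcal{Z}\succeq 0$, hence $\bar{X}^{\rm lmi}\succeq 0$. The Kronecker product of two PSD matrices is PSD, so $\bar{X}^{\rm lmi}\otimes\mathcal{Z}\succeq 0$, and the whole term is PSD.

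Assembling: $H_k(y)=rI+M$ with $M\succeq 0$, so for any unit vector $v$, $v^\top H_k(y) v = r + v^\top M v\ge r$, which by the variational characterization of eigenvalues gives $\lambda_i(H_k(y))\ge r$ for all $i$. I expect the only genuinely delicate point to be justifying the sign/definiteness of $\mathcal{Z}$ — that is, that the iterates stay in the region where $A^{\rm lmi}(y)-\pi I$ is negative definite — which relies on the penalty-update safeguard in \cref{PBMAlgorithm} and on the domain of the hyperbolic penalty function; the rest is a routine chain of "PSD is preserved under congruence / Kronecker product / addition" observations.
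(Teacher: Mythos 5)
Your argument is correct and follows essentially the same route as the paper's proof: decompose $H_k(y)$ into $rI$ plus the LMI term plus the linear-constraint term, observe that the latter two are positive semidefinite (because $\bar{X}^{\rm lmi}$, $\mathcal{Z}$ and $\bar{W}^{\rm lin}$ are), and conclude via the variational characterization of eigenvalues. The paper simply asserts the positive semidefiniteness of these three constituent matrices, whereas you verify it in detail (including the sign of $\mathcal{Z}$ via the penalty-update safeguard), but the underlying idea is identical.
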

\begin{proof}
Due to the positive semidefiniteness of the matrices $\bar{X}^{\rm{lmi}}(y,X_k^{\rm{lmi}},\pi_k^{\rm{lmi}})$, $\mathcal{Z}(y,\pi_k^{\rm{lmi}})$
and $\bar{W}^{\rm{lin}}(y,X_k^{\rm{lin}},\pi_k^{\rm{lin}})$, 
the second and third term in \cref{eq:Hessianaug} are positive semidefinite. Therefore, it can be directly seen that $r$ is a uniform lower bound for the smallest eigenvalue of $H_k(y)$. 
\end{proof}
\begin{remark}
If the penalty parameter sequences $(\pi_k^{\rm{lmi}})$ and $(\pi_k^{\rm{lin}})$ are bounded by $(\pi_*^{\rm{lmi}})$ and $(\pi_*^{\rm{lin}})$, respectively, and $(y_k,X_k)$ converges to a KKT pair $(y_*,X_*)$ of \cref{eq:SDP}, one can also show that all the eigenvalues of $H_k(y)$ are bounded from above for all $k$. This is seen from the following limits:
\begin{align*}
\bar{X}^{\rm{lmi}}(y_k,X_k,\pi_k^{\rm{lmi}}) &\to X_*^{\rm{lmi}} \ \mbox{ for }\ k \to \infty,\\
\mathcal{Z}(y_k,\pi_k^{\rm{lmi}})  &\to Z_* \ \mbox{ for }\ k \to \infty,\\ 
\bar{W}^{\rm{lin}}(y_k,X_k^{\rm{lin}},\pi_k^{\rm{lin}})&\to W_* \ \mbox{ for }\ k \to \infty,
\end{align*}
where $Z_*$ is a matrix with eigenvalues between 0 and $1/\pi_*^{\rm{lmi}}$ and $W_*$ is a matrix with eigenvalues between 0 and $(2/\pi_*^{\rm{lin}}) \lambda_{\max}(X_*^{\rm{lin}})$.
We note that we do not present a formal proof for this here, as this would require a full convergence proof for \cref{PBMAlgorithm}. For the latter, we refer to \cite{MSThesis}.
\end{remark}


Next, we would like to have a look at the complexity of a Newton step. While the cost of the solution of the Newton system is generally of order $\mathcal{O}(n^3)$, the costs for the assembly of the gradient and the Hessian of the augmented Lagrangian are dominated by the assembly of the LMI terms. 
For these, we note that each evaluation of the term $\bar{X}_k^{\rm{lmi}}(y,X,\pi) A_j \mathcal{Z}(y,\pi)$ is of order $\mathcal{O}(m^3)$. This step has to be carried out for every $A_j, \ j=1,\ldots,n$. Then for each such matrix, the inner product with all matrices $A_j, \ j=1,\ldots,n$ has to be computed. Thus, the total complexity of the assembly is $\mathcal{O}(nm^3)$. We note that for very sparse constraint matrices, a different way of assembling the Hessian can be chosen and the complexity formula reduces to $\mathcal{O}(n^2)$.
For more details, we refer to \cite{MKMS}.


Obviously, for very large SDP problems, assembling the Hessian in \cref{eq:Hessianaug} is very expensive and sometimes the matrix cannot even be stored. Therefore, in this paper, we suggest to use the PCG method to solve the Newton system. For this, instead of assembling the full Hessian, only matrix-vector multiplications are required. Given a vector $d \in \mathbb{R}^n$, these can be computed as
\begin{equation*}
H_k(y) d = rd + (A^{\rm{lmi}})^*Q(y,d) + D^\top \bar{W}^{\rm{lin}}(y,X_k^{\rm{lin}},\pi_k^{\rm{lin}})D d,
\end{equation*}
where 
\begin{equation*}
Q(y,d)=\bar{X}^{\rm{lmi}}(y,X^{\rm{lmi}}_k,\pi^{\rm{lmi}}_k)A_0(d)\mathcal{Z}(y,\pi^{\rm{lmi}}_k)\,.
\end{equation*}
Using this approach and assuming sparse data matrices, the complexity of Hessian-vector multiplication reduces to $\mathcal{O}(n+m^3)$. Of course, the total complexity of the (approximate) solution of the Newton system also depends on the number of PCG iterations as well as the cost of the assembly of the preconditioner. For further discussion on this topic, we refer to \Cref{sec:examples}.

%
%
%
%
%
\subsection{Primal-dual approach}
In numerical experiments, partly also performed by the original PENNON algorithm \cite{PENNON}, it was found that it is sometimes very hard to solve the subproblems in Step~2 of the \cref{PBMAlgorithm} to the required precision. This is particularly problematic if an iterative solver is used for the Newton system.

This was our motivation to apply one more modification, which is inspired by the work of Griva and Polyak \cite{GrivaPolyak}. We call this approach a \emph{Primal Dual Augmented Lagrangian} (PDAL) approach. To unburden the notation, we will not distinguish between the LMI and the linear constraint block as in the previous section. Moreover, as we will focus on Step~2 of \cref{PBMAlgorithm}, we skip the outer iteration index $k$ from all quantities in the sequel. To distinguish between variables which are fixed and variables which are optimized in Step~2, we add a tilde to all symbols which belong to fixed variables.

Now, the basic idea is to rephrase equation \cref{eq:argmin} 
\begin{equation}
    \nabla_y F(\widehat{y},\Tilde{y},\Tilde{X},\Tilde{\pi})=0,
    \label{eq:optimalitycondition}
\end{equation}
which is rewritten here as explained above, as the following primal-dual system:
\begin{align}
    \tilde{b} + r(\widehat{y}-\Tilde{y})+A^*\widehat{X}&=0
    \label{eq:Primalupdate}\\
    \widehat{X}-\bar{X}(\widehat{y})&=0.
    \label{eq:Dualupdate}
    \end{align}
      Here $\widehat{y}$ and $\widehat{X}$ are the variables and $\bar{X}(\widehat{y})=\Tilde{\pi}^2\mathcal{Z}(\widehat{y},\Tilde{\pi})\Tilde{X}\mathcal{Z}(\widehat{y},\Tilde{\pi})$ is the multiplier update function, which has been already introduced in the previous section. For the sake of readability, we will skip the second argument of $\mathcal{Z}$ in the remainder of this section.
      It is readily seen that system \cref{eq:Primalupdate}--\cref{eq:Dualupdate} is equivalent to \cref{eq:optimalitycondition}. In particular, if \cref{eq:Primalupdate}--\cref{eq:Dualupdate} is solved exactly, the solution $\widehat{y}_*$ solves \cref{eq:optimalitycondition} and $\widehat{X}_*$ corresponds to the (already updated) multiplier. Thus, at the first glance, the introduction of the additional variable $\widehat{X}$ just seems to make Step~2 of \cref{PBMAlgorithm} more expensive. However, it turns out that there are a number of advantages of the rephrased system. First, \cref{eq:Primalupdate} is---due to its linear structure---much easier to solve than \cref{eq:optimalitycondition} and, second, in every iteration we get a candidate to update both, the primal and the dual variable. The latter is particularly interesting at the later (outer) iterations, because, if we are sufficiently close to a solution of \cref{eq:SDP}, it can be expected that ultimately only one iteration will be required before $\Tilde{X}$ can be updated (see  \cite{GrivaPolyak} for a theoretical justification of this claim in the nonlinear programming case). This requires of course a relaxation of the stopping criterion applied in the inner iteration. We will return to this point a little later. 
      
Formally, we again apply Newton's method to solve system \cref{eq:Primalupdate}--\cref{eq:Dualupdate}. For this, we define
    \begin{align}
    G_1(\widehat{y},\widehat{X})&=\tilde{b}+r(\widehat{y}-\Tilde{y})+A^*\widehat{X}, \label{eq:G1}\\
    G_2(\widehat{y},\widehat{X})&=\widehat{X}-\bar{X}(\widehat{y}).\label{eq:G2}
\end{align}
Now, in order to derive the corresponding Newton system, we need to compute the Jacobians of $G_1$ and $G_2$. These are provided by the following lemma.
\begin{lemma}
Let $G_1$ and $G_2$ be given as in \cref{eq:G1} and \cref{eq:G2} and let $(\Delta \widehat{y},\Delta \widehat{X})$ be a direction in $\mathbb{R}^n \times \mathbb{S}^m$. Then the following formulas hold for the directional derivatives of $G_1$ and $G_2$:
\begin{align}
   	\partial_{\widehat{y}} G_1(\widehat{y},\widehat{X})[\Delta \widehat{y}]&=rI\Delta \widehat{y}=r\Delta \widehat{y},\\
   	\partial_{\widehat{y}} G_2(\widehat{y},\widehat{X})[\Delta \widehat{y}]&=
   	-\bar{X}(\widehat{y})A_0(\Delta\widehat{y})\mathcal{Z}(\widehat{y})-\mathcal{Z}(\widehat{y})A_0(\Delta\widehat{y})\bar{X}(\widehat{y}),\\
    	\partial_{\widehat{X}} G_1(\widehat{y},\widehat{X})[\Delta \widehat{X}]&=A^*\Delta\widehat{X},\\
     	\partial_{\widehat{X}} G_2(\widehat{y},\widehat{X})[\Delta \widehat{X}]&=\Delta\widehat{X}. 
\end{align}
\label{eq:jacobianG}
\end{lemma}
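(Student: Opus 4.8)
The plan is to observe that almost all of the asserted formulas are immediate, because $G_1$ is affine in $(\widehat{y},\widehat{X})$ and $\bar{X}$ depends only on $\widehat{y}$. From \cref{eq:G1}, $G_1$ is the sum of the constant $\tilde{b}-r\Tilde{y}$, the linear map $\widehat{y}\mapsto r\widehat{y}$ and the linear map $\widehat{X}\mapsto A^*\widehat{X}$; hence $\partial_{\widehat{y}}G_1[\Delta\widehat{y}]=r\Delta\widehat{y}$ and $\partial_{\widehat{X}}G_1[\Delta\widehat{X}]=A^*\Delta\widehat{X}$ with no computation. Similarly, from \cref{eq:G2}, $G_2$ is linear in $\widehat{X}$ and $\bar{X}$ does not involve $\widehat{X}$, so $\partial_{\widehat{X}}G_2[\Delta\widehat{X}]=\Delta\widehat{X}$. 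The only substantive step is the derivative of $G_2$ in the direction $\Delta\widehat{y}$, i.e., the derivative of the multiplier update map $\bar{X}(\widehat{y})=\Tilde{\pi}^2\,\mathcal{Z}(\widehat{y})\Tilde{X}\,\mathcal{Z}(\widehat{y})$.

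For that term I would proceed by the chain rule in three layers. First, since $A^{\rm{lmi}}(y)=A_0(y)-C$ is affine in $y$ with $A_0(y)=\sum_{i=1}^n A_iy_i$, its derivative in direction $\Delta\widehat{y}$ is just $A_0(\Delta\widehat{y})$. Second, recalling $\mathcal{Z}(\widehat{y})=-(A^{\rm{lmi}}(\widehat{y})-\Tilde{\pi}I)^{-1}$, I would apply the standard identity $\partial(M^{-1})[\Delta M]=-M^{-1}(\Delta M)M^{-1}$ with $M=A^{\rm{lmi}}(\widehat{y})-\Tilde{\pi}I$ and $\Delta M=A_0(\Delta\widehat{y})$; using $M^{-1}=-\mathcal{Z}(\widehat{y})$ this gives $\partial_{\widehat{y}}\mathcal{Z}(\widehat{y})[\Delta\widehat{y}]=\mathcal{Z}(\widehat{y})A_0(\Delta\widehat{y})\mathcal{Z}(\widehat{y})$. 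Third, I would use the product rule on $\Tilde{\pi}^2\,\mathcal{Z}(\widehat{y})\Tilde{X}\,\mathcal{Z}(\widehat{y})$, differentiating each $\mathcal{Z}$ factor in turn, and then reassemble the result using $\Tilde{\pi}^2\mathcal{Z}(\widehat{y})\Tilde{X}\mathcal{Z}(\widehat{y})=\bar{X}(\widehat{y})$, which yields $\partial_{\widehat{y}}\bar{X}(\widehat{y})[\Delta\widehat{y}]=\mathcal{Z}(\widehat{y})A_0(\Delta\widehat{y})\bar{X}(\widehat{y})+\bar{X}(\widehat{y})A_0(\Delta\widehat{y})\mathcal{Z}(\widehat{y})$. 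Since $G_2=\widehat{X}-\bar{X}(\widehat{y})$, negating this gives exactly the claimed formula for $\partial_{\widehat{y}}G_2$ (up to the order of the two summands).

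I would also make explicit that all these derivatives are well defined near the current iterate: the penalty-parameter update in \Cref{PBMAlgorithm} keeps $\Tilde{\pi}>\lambda_{\max}(A^{\rm{lmi}}(\widehat{y}))$, so $A^{\rm{lmi}}(\widehat{y})-\Tilde{\pi}I$ is negative definite and hence invertible on a neighbourhood of $\widehat{y}$, making $\mathcal{Z}$, and therefore $\bar{X}$, a smooth function of $\widehat{y}$ there. With this in place there is no real obstacle—the proof is a routine exercise in matrix calculus—and the only thing requiring care is the bookkeeping in the product-rule step, keeping track of which factor is $\mathcal{Z}$ and which is $\bar{X}$ after the simplification so that the two non-commuting terms are placed in the correct order.
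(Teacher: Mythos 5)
Your proof is correct and is exactly the ``elementary operations'' computation that the paper's one-line proof alludes to: the affine structure of $G_1$ and the linearity of $G_2$ in $\widehat{X}$ give three of the formulas immediately, and the remaining one follows from $\partial(M^{-1})[\Delta M]=-M^{-1}(\Delta M)M^{-1}$ applied to $\mathcal{Z}(\widehat{y})=-(A^{\rm{lmi}}(\widehat{y})-\Tilde{\pi}I)^{-1}$ together with the product rule. Your additional remark on well-definedness (invertibility of $A^{\rm{lmi}}(\widehat{y})-\Tilde{\pi}I$ guaranteed by the penalty update) is a sound and welcome extra detail that the paper omits.
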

\begin{proof}
All formulas can be obtained by elementary operations.
\end{proof}

Now, introducing an inner iteration index $\ell$ and using \cref{eq:jacobianG}, we obtain the following primal-dual Newton system:
\begin{gather}
  r\Delta \widehat{y}^\ell+A^*\Delta\widehat{X}^\ell=-\tilde{b}-r(\widehat{y}^\ell-\bar{y})-A^*\widehat{X}^\ell, \label{eq:deltaydirection}\\
  -\bar{X}(\widehat{y}^\ell)A_0(\Delta\widehat{y}^\ell)\mathcal{Z}(\widehat{y}^\ell)-\mathcal{Z}(\widehat{y}^\ell)A_0(\Delta\widehat{y}^\ell)\bar{X}(\widehat{y}^\ell)+\Delta \widehat{X}^\ell=-\widehat{X}^\ell+\bar{X}(\widehat{y}^\ell).
  \label{eq:deltaxdirection}
\end{gather}
In order to solve \cref{eq:deltaydirection}--\cref{eq:deltaxdirection} efficiently, we suggest to substitute $\Delta \widehat{X}^\ell$ from \cref{eq:deltaxdirection} into \cref{eq:deltaydirection}. Then, on the left-hand side of \cref{eq:deltaydirection}, we obtain:
\begin{equation}
     r\Delta \widehat{y}^\ell+2A^*\left(\bar{X}(\widehat{y}^\ell)A_0(\Delta\widehat{y}^\ell)\mathcal{Z}(\widehat{y}^\ell)\right).
     \end{equation}
     Obviously, this is just $\nabla_{\widehat{y}\widehat{y}} F(\widehat{y}^\ell,\Tilde{y},\Tilde{X},\Tilde{\pi})\Delta\widehat{y}^\ell$, i.e., the Hessian of the augmented Lagrangian determined in the previous section, multiplied by $\Delta\widehat{y}^\ell$. We abbreviate this Hessian by $H^\ell$ in the sequel.
Similarly, on the right-hand side of \cref{eq:deltaydirection}, the substitution yields:
   \begin{align}
     -\tilde{b}-r(\widehat{y}^\ell-\bar{y})-A^*\widehat{X}^\ell+A^*\left(\widehat{X}^\ell-\bar{X}(\widehat{y}^\ell)\right)
     &=-\tilde{b}-r(\widehat{y}^\ell-\Tilde{y})-A^*\bar{X}(\widehat{y}^\ell)\nonumber \\
     &=-\nabla_{\widehat{y}} F(\widehat{y}^\ell,\Tilde{y},\Tilde{X},\tilde{\pi}).
\end{align}
Thus, we can rewrite \cref{eq:deltaydirection} and \cref{eq:deltaxdirection} as:
\begin{align}
    H^\ell \Delta\widehat{y}^\ell&=-\nabla_{\widehat{y}} F(\widehat{y}^\ell,\Tilde{y},\Tilde{X},\tilde{\pi}),
    \label{eq:N1}\\
    \Delta \widehat{X}^\ell&=-\widehat{X}^\ell+\bar{X}(\widehat{y}^\ell) + \bar{X}(\widehat{y}^\ell)A_0(\Delta\widehat{y}^\ell)\mathcal{Z}(\widehat{y}^\ell)+\mathcal{Z}(\widehat{y}^\ell)A_0(\Delta\widehat{y}^\ell)\bar{X}(\widehat{y}^\ell).\label{eq:N2}
\end{align}
We can first solve \cref{eq:N1}, which is precisely the Newton system introduced in \Cref{Penalty/Barrier} and then just evaluate \cref{eq:N2}. 
In order to damp the Newton step, we introduce the  \emph{merit function}:
\begin{equation}
    M(\widehat{y},\widehat{X})=\frac{1}{2}\left(\|G_1(\widehat{y},\widehat{X})\|^2+\|G_2(\widehat{y},\widehat{X})\|^2\right)\,.
    \label{eq:merit}
\end{equation}
With this, we present \cref{PDAlgorithm} for the solution of the primal-dual system.
\begin{algorithm}
\caption{Newton's method for the primal-dual system}
\label{PDAlgorithm}
Let $\widehat{y}^0=\Tilde{y}, \widehat{X}^0=\Tilde{X}$ and a convergence tolerance $\varepsilon>0$ be given. Let further $(\Tilde{y},\Tilde{X})$ be known from the current outer iteration. 
\begin{algorithmic}[1]
\For{$\ell=0,1,2,\ldots$}
\State\label{a2l2} Solve \cref{eq:N1}--\cref{eq:N2} to find the primal-dual Newton direction $(\Delta\widehat{y}^\ell,\Delta\widehat{X}^\ell)$.
\State\label{a2l3}  Backtracking line search: compute a step length $0 < \alpha \leq 1$ such that 
\begin{equation}
    M(\widehat{y}^\ell+\alpha\Delta\widehat{y}^\ell,\widehat{X}^\ell+\alpha\Delta\widehat{X}^\ell)\leq M(\widehat{y}^\ell,\widehat{X}^\ell) + 0.05\alpha M^\prime(\widehat{y}^\ell,\widehat{X}^\ell)[\Delta\widehat{y}^\ell, \Delta\widehat{X}^\ell],
    \label{eq:linesearch}
\end{equation}
where $M^\prime(\widehat{y}^\ell,\widehat{X}^\ell)[\Delta\widehat{y}^\ell, \Delta\widehat{X}^\ell]$ is the directional derivative of the merit function \cref{eq:merit} at $(\widehat{y}^\ell,\widehat{X}^\ell)$ in direction of $(\Delta \widehat{y}^\ell,\Delta\widehat{X}^\ell)$.
\State\label{a2l4}  Update the primal and dual iterates using the following formulas:
\begin{equation}
    \widehat{y}^{\ell+1}=\widehat{y}^\ell+\alpha\Delta\widehat{y}^\ell,\quad \widehat{X}^{\ell+1}=\widehat{X}^\ell+\alpha\Delta\widehat{X}^\ell.
\end{equation}
\State\label{a2l5}  If $M(\widehat{y}^{\ell+1},\widehat{X}^{\ell+1})\leq \varepsilon$ and $\widehat{X}^\ell$ s.p.d., STOP and return the approximate solution $y^s:=\widehat{y}^{\ell+1}$, $X^s:=\widehat{X}^{\ell+1}$
\EndFor
\end{algorithmic}
\end{algorithm}
\begin{remark}
     Note that in the first iterations it might happen that $\widehat{X}^\ell$ has negative eigenvalues. However, knowing that $\bar{X}(\widehat{y}^\ell)$ is always a positive definite matrix by construction, as soon as \cref{eq:Dualupdate} is solved well enough, the matrix $\widehat{X}^\ell$ will be positive definite as well. Thus, the second stopping criterion in Step~5 can always be satisfied.
     \end{remark}
     \begin{remark}
     As the linear system in Step~2 is formally fully equivalent to the Newton system solved in Step~2 of \cref{PBMAlgorithm}, we can apply the same direct or iterative solvers. In this article, the Newton system is solved approximately by the PCG method; see \Cref{sec:precon}.
      \end{remark}

  
Finally, we explain how precisely \cref{PDAlgorithm} is integrated into \cref{PBMAlgorithm}. We perform the following steps: First, the new algorithm is replacing the primal iteration in the first step of \cref{PBMAlgorithm}. Second, we introduce the following stopping criterion for the inner (primal-dual) iteration:
    \begin{equation} \label{eq:PDAL_earlystopping}
    \begin{aligned}
        & e(\widehat{y}^\ell,\widehat{X}^\ell) < 0.5 ~ e(\Tilde{y},\Tilde{X}) \ \land \|G_2(\widehat{y}^\ell,\widehat{X}^\ell)\|^2<0.1 \\ & \land \ \|G_1(\widehat{y}^\ell,\widehat{X}^\ell)\|^2< 0.05 \max\{1,\|\nabla_y F(\widehat{y}^\ell,\Tilde{X},\Tilde{\pi})\|\} \land \ \widehat{X}^\ell \succ 0.
\end{aligned}
        \end{equation}
        with the \emph{primal-dual error function} 
        $$
        e(\widehat{y}^\ell,\widehat{X}^\ell):=\max\{p_{\rm{feas}}(\widehat{y}^\ell,\widehat{X}^\ell),d_{\rm{feas}}(\widehat{y}^\ell,\widehat{X}^\ell),d_{\rm{gap}}(\widehat{y}^\ell,\widehat{X}^\ell)\}.
        $$
    Thus, in every inner iteration we also monitor the progress with respect to the outer stopping criterion, which we define as $e({y}^k,{X}^k) \leq \varepsilon$ with $\varepsilon$ denoting the outer stopping tolerance. It is seen that the early stopping criterion is only applied if also the primal-dual system is solved reasonably well and the candidate for the new matrix multiplier is positive definite.

With this, the final PDAL algorithm is presented as \cref{PDALAlgorithm}.
\begin{algorithm}
\caption{Generalized Primal-Dual Augmented Lagrangian Algorithm}
\label{PDALAlgorithm}
 Let $X_0\in \mathbb{S}^{m+\nu}_{++}$, $\pi^{\rm{lin}}_0>0$, $\pi^{\rm{lmi}}_0>0$ and $y_0\in \mathbb{R}^n$ be given. Choose stopping tolerances $\varepsilon>0$, $\varepsilon_{\scriptscriptstyle \rm DIMACS}>0$ and  $(\varepsilon_k) \subset \mathbb{R}_+$ for the outer and inner loop, respectively. \\[-1em]
 \begin{algorithmic}[1]
 \For {$k=0,1,2,\ldots$}
   \State Use \cref{PDAlgorithm} with $(\Tilde{y},\Tilde{X})=(y_{k},X_{k})$ and stopping tolerance $\varepsilon_k$ to compute a new primal and dual iterate $(y_{k+1},X_{k+1})$. In step 5 of \cref{PDAlgorithm} apply the additional early stopping criterion \cref{eq:PDAL_earlystopping}.
   \State STOP, if $e({y}_{k+1},{X}_{k+1}) < \varepsilon$ or the DIMACS criteria are satisfied with the precision $\varepsilon_{\scriptscriptstyle \rm DIMACS}$ . Return $({y}_{k+1},{X}_{k+1})$ as approximate primal-dual solution.
    \State Damp the multiplier update by carrying out step 3 of \cref{PBMAlgorithm}.
     \State Update the penalty parameters by carrying out  step 4 of \cref{PBMAlgorithm}.
    \EndFor
 \end{algorithmic}
\end{algorithm}

\section{Interior-point method} \label{sec:IP}
In this section, we will describe a primal-dual predictor-corrector interior-point method for SDP which is using the Nesterov–Todd (NT) direction. We will closely follow the articles by Todd, Toh and Tütüncü \cite{Todd-Toh} and by Toh and Kojima \cite{Toh-Kojima} and only repeat formulas needed to present the structure of the matrix that is required to develop the preconditioners in \Cref{sec:precon}. The basic framework of our interior-point solver thus, more or less, mimics that of the software SDPT3. 

We use the notation of problems \eqref{eq:SDP} and \eqref{eq:SDD}; however, for the sake of simplicity, we ignore the linear constraints in the following development. They can be, of course, formally included into the linear matrix inequality.

\subsection{Basic framework}
Let $\BA$ be an $m^2\times n$ matrix defined by
$$
\BA:=[\text{vec} A_1,\dots,\text{vec} A_n].
$$
Our algorithm follows the standard steps of a primal-dual interior-point method. We write down optimality conditions for \cref{eq:SDP} with relaxed complementarity condition:
\begin{align}\label{eq:OC}
\BA^\top \text{vec} (X) &= b,\\
\BA y-S& = C,\nonumber\\
XS &= \sigma\mu I\,,\nonumber
\end{align}
where $\mu=\frac{X\bullet S}{m}$ and $\sigma$ is a centering parameter to be specified below. This system of nonlinear equations is solved repeatedly until convergence. 

\subsection{Newton direction}
The system \cref{eq:OC} is solved approximately by Newton's method, where in every iteration of the method we solve  the following system of linear equations in variables $(\Delta X,\Delta y,\Delta S)$, the Newton direction: 
\begin{subequations}
	\begin{align}
	\BA^\top \text{vec}(\Delta X)&=r_p,\label{eq:a}\\
	\BA\Delta y +\text{vec}(\Delta S)&=\text{vec}(R_d),\label{eq:b}\\
	H_W(X\Delta S +\Delta X S)  &=  \sigma\mu I-H_W(XS)\,.\label{eq:c}
	\end{align}
\end{subequations}
Here
$$
r_p:=b-\BA^\top X,\quad R_d:=C-S-\sum_{i=1}^{n}y_iA_i,
$$
and $H_W$ is a linear transformation guaranteeing symmetry of the resulting matrix, in particular
$$
   H_W(M) = \frac{1}{2}\left(WMW^{-1}+W^{-T}M^TW^T\right)
$$
with some invertible matrix $W$  (see \cite{zhang}). The matrix $W\in \mathbb{S}_{++}^m$ is known as the \emph{scaling matrix} and there are different ways to construct it. By assuming $({X},{y},{S})$ to be the current iterate, the most studied choices of $W$ are the following:
\begin{itemize}
	\item 
	primal scaling, in which $W={{X}}$;
	\item
	dual scaling, in which $W={{S}}^{-1}$; 
	\item
	Nesterov-Todd (NT) scaling, in which 
	$$
		W = S^{-\frac{1}{2}}\left(S^{\frac{1}{2}}XS^{\frac{1}{2}}\right)^{\frac{1}{2}}S^{-\frac{1}{2}}
		= X^{\frac{1}{2}}\left(X^{-\frac{1}{2}}SX^{-\frac{1}{2}}\right)^{\frac{1}{2}}X^{\frac{1}{2}}
	$$
	satisfying ${X}=W{S}W$ and ${S}=WX^{-1}{}W$.
\end{itemize}
In our algorithm, we use the NT scaling.

As in the case of linear programming, instead of solving the linear system of $2m^2 + n$ equations \cref{eq:a}–\cref{eq:c} directly, we can solve a Schur complement equation (SCE) involving only $\Delta y$. The general bottleneck of any interior-point method for SDP is assembling and solving this SCE
\begin{subequations}
	\begin{align}
		&H\Delta y =r.\label{eq:scea}&
	\end{align}
	Here $H$ is the Schur complement matrix with elements
	\begin{align}
		&H_{ij}=A_i\bullet WA_j W, \quad i,j=1,\dots,n,\label{eq:sced}
	\end{align}
	and
	\begin{align}\label{eq:rhs_pre}
	r = r_p + \BAT \text{vec}\left(W R_d W + WSW\right)\,.
	\end{align}
	For details of computing $W$, $r$, $\Delta X$ and $\Delta S$ efficiently for the NT scaling, see \cite{Toh-Kojima}.
\end{subequations}

By considering \cref{eq:scea,eq:sced} we have the following linear system
\begin{align}
	(H\Delta y)_i= A_i\bullet\left[W\left(\sum_{j=1}^{n}\Delta y_i A_j\right)W\right]=r_i, \quad i=1,\dots,n.\label{eq:linearsys}
\end{align}
Vectorizing the matrix variables allows \cref{eq:linearsys} to be written as
\begin{align}
	\left(\BAT (W\otimes W) \BA\right)\Delta y=r\,. \label{eq:schurcomp}
\end{align}
\subsection{Interior-point algorithm}
In the following \cref{algo:IP}, we use a second-order correction to the NT direction. Suppose that we have approximations $\delta X$ and $\delta S$ to the search directions. The correction $R_{NT}$ is given by
\begin{align}
	R_{NT}=-\left(G^{-1}\left(\delta X\delta S\right)G+G^\top \left(\delta S\delta X\right)G^{-T}\right)./\left(de^\top +ed^\top \right),\label{RNT}
\end{align}
where \textquotedblleft$./$" is entry-wise division, $G$ satisfies $W=GG^\top$, $d:=\text{diag}(G^\top S G)$ and $e:=(1,\dots,1)^\top $. 
With this correction, we define a new right-hand side of \cref{eq:sced} as
	\begin{align}\label{eq:rhs_cor}
	r = r_p + \BAT \text{vec}\left(W R_d W + W S W - \sigma\mu S^{-1}  - GR_{NT}G^\top\right)\,. 
\end{align}
  \begin{algorithm}
\caption{NT-IP Algorithm}
\label{algo:IP}
Given an initial iterate $(X^0,y^0,S^0),$ with $X^0$, $S^0$ positive definite. Choose $\tau=0.9$. Let $\mu=\frac{X\bullet S}{m}$.
\begin{algorithmic}[1]
\For{$\ell=0,1,2,\ldots$}
	\State{\emph{Predictor step}:
	For $(X^\ell,y^\ell,S^\ell)$ compute the Newton direction $(\delta X,\delta y,\delta S)$ from \cref{eq:scea,eq:a,eq:b,eq:c} with right-hand side \cref{eq:rhs_pre}.}
	\State
	Find
	\begin{align*}
	\alpha:=\min\Big(1,	{\frac{-\tau}{{\lambda_{\min}} \left((X^\ell)^{-1}\delta X\right)}}\Big),\quad 	\beta:=\min\Big(1,	{\frac{-\tau}{{\lambda_{\min}} \left((S^\ell)^{-1}\delta S\right)}}\Big)\label{eq:alphabeta}
	\end{align*}
	  ensuring that $X^\ell + \alpha \delta X$ and $S^\ell + \beta \delta S$ are positive definite.
	Set
	\begin{align*}
	\sigma:=\Big[\frac{(X^\ell+\alpha\delta X)\bullet(S^\ell+\beta\delta S)}{X^\ell\bullet S^\ell}\Big]^3.
	\end{align*}
	  \State
	  \emph{Corrector step}:
	  Compute the Newton correction $(\Delta X,\Delta y,\Delta S)$ from \cref{eq:scea,eq:a,eq:b,eq:c} with right-hand side \cref{eq:rhs_cor}.
	   \State
	   {Update $(X^\ell,y^\ell,S^\ell)$ by
	   $$X^{\ell+1}=X^\ell+\alpha \Delta X,\quad y^{\ell+1}=y^\ell+\beta \Delta y,\quad S^{\ell+1}=S^\ell+\beta\Delta S$$
	   where $\alpha$ and $\beta$ are chosen as in \cref{eq:alphabeta} with $\delta X$, $\delta S$ replaced by $\Delta X$, $\Delta S$, so that $X^{\ell+1}$ and $S^{\ell+1}$ are positive definite.}
	   \EndFor
  \end{algorithmic}
  \end{algorithm}
\section{Preconditioners} \label{sec:precon}
As we mentioned before, the general bottleneck of both algorithms defined above is assembling and solving the Schur complement equation \cref{eq:schurcomp} or the Newton equation \cref{eq:N1}. One of the ways to solve this equation is using an iterative method, in particular, the method of conjugate gradients. 

The Schur complement or Newton equation matrix $H$ is large and sparse, and the assembling of it may be expensive. Besides, and more importantly, it becomes increasing ill-conditioned, in particular in the IP method as it makes progress towards the solution. So a successful CG-based solution of our linear systems relies on an efficient preconditioner. 

In this section, we introduce new efficient preconditioners for this purpose and recall some existing ones. Before doing that, we will introduce a more general form of SDP \cref{eq:SDP} with $p$ matrix variables $X_i\in\S^{m_i}$, $i=1,\ldots p$, and with linear constraints:
\begin{align}
\label{eq:SDPP}
	&\min_{X_1,\ldots,X_p,x_{\text{lin}}~} \sum_{i=1}^p C_i \bullet X_i+d^\top x_{\text{lin}} \\
	&\text{{subject to}}~ \begin{aligned}[t]&\sum_{i=1}^p A_j^{(i)} \bullet X_i + (D^\top x_{\text{lin}})_j =b_j, \quad j=1,\dots,n\\
	&X\succeq 0,\  x_{\text{lin}}\geq 0
\end{aligned}\nonumber
\end{align}
and its dual
\begin{align}
\label{eq:SDPD}
    &\max_{y,S_1,\ldots,S_p,s_{\text{lin}}~} b^\top y&\\
    &\text{{subject to}}~\begin{aligned}[t]&\sum_{j=1}^{n}y_jA_j^{(i)}+S_i=C_i,\quad i=1,\ldots, p\\
    &Dy+s_{\text{lin}}=d\\
    &S_i\succeq 0,\quad i=1,\ldots, p\\
    &s_{\text{lin}}\geq 0\,.
\end{aligned}\nonumber
\end{align}
While, formally, \cref{eq:SDPP} and \cref{eq:SDPD} can be written in the form of \cref{eq:SDP} and \cref{eq:SDD}, respectively, the more detailed form is needed for software implementation and we will thus use it in the following. Furthermore, the formal separation of linear constraints is important for the derivation of the preconditioner.

Accordingly, we have the following new notation: $\BA_i$ is the matrix $\BA$ for the $i$-th LMI, which is
$$\BA_i=[\text{vec}A_1^{(i)},\dots, \text{vec}A_n^{(i)}],$$
and $W_i$ is the scaling matrix $W$ for the $i$-th LMI.

Our goal is to solve efficiently linear systems with matrix $H$, which is either the Schur complement matrix in the IP method or the Hessian in the PDAL method.
In designing the preconditioner, we will assume that $H$ can be written as a sum
$$
  H = \widehat{H} + \sum_{i=1}^p B_i\,.
$$
The idea is to replace the matrices $B_i$ by their approximation $\widetilde{B}_i$, $i=1,\ldots,p$, with the aim to simplify computation of the inverse of $\sum\limits_{i=1}^p\widetilde{B}_i$. We will use the following general theorem.

\begin{theorem}\label{th:precond}
    Let $H = \widehat{H} + \sum\limits_{i=1}^p B_i$ and let $\widetilde{B}_i$, $i=1,\ldots,p$, be matrices given such that $P:=\widehat{H} + \sum\limits_{i=1}^p\widetilde{B}_i$ is positive definite.
    Assume that, for $i=1,\ldots,p$,
    $$
        \lambda_{\rm max}\left(P^{-1/2}(B_i - \widetilde{B}_i) P^{-1/2}\right) \leq \overline\varepsilon_i\
\quad\mbox{and}\quad
        \lambda_{\rm min}\left(P^{-1/2}(B_i - \widetilde{B}_i) P^{-1/2}\right) \geq \underline\varepsilon_i\,.
    $$
    Then 
    $$
        \kappa\left(P^{-1/2} H P^{-1/2}\right) \leq \frac{\displaystyle 1+\sum_{i=1}^p\overline\varepsilon_i}{\displaystyle 1+\sum_{i=1}^p\underline\varepsilon_i}\,.
    $$
\end{theorem}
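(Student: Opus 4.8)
The plan is to reduce the condition number bound to a two-sided estimate on the Rayleigh quotient of the preconditioned matrix $P^{-1/2}HP^{-1/2}$ and then invoke the hypotheses term by term. First I would write
\[
    P^{-1/2}HP^{-1/2} = P^{-1/2}\Bigl(\widehat H + \sum_{i=1}^p B_i\Bigr)P^{-1/2}
    = P^{-1/2}\Bigl(P + \sum_{i=1}^p (B_i-\widetilde B_i)\Bigr)P^{-1/2}
    = I + \sum_{i=1}^p P^{-1/2}(B_i-\widetilde B_i)P^{-1/2},
\]
using the definition $P = \widehat H + \sum_i \widetilde B_i$. This is the key algebraic identity: the preconditioned operator is the identity plus a sum of the ``error'' matrices $E_i := P^{-1/2}(B_i-\widetilde B_i)P^{-1/2}$, each of which is symmetric.

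Next I would bound the extreme eigenvalues of $I + \sum_i E_i$. For any unit vector $v$, the Rayleigh quotient is $1 + \sum_i v^\top E_i v$, and by hypothesis $\underline\varepsilon_i \le v^\top E_i v \le \overline\varepsilon_i$ for each $i$ (since $\lambda_{\min}(E_i) \le v^\top E_i v \le \lambda_{\max}(E_i)$). Summing gives
\[
    1 + \sum_{i=1}^p \underline\varepsilon_i \;\le\; v^\top\Bigl(I+\sum_{i=1}^p E_i\Bigr)v \;\le\; 1+\sum_{i=1}^p\overline\varepsilon_i
\]
for all unit $v$, hence the same bounds hold for $\lambda_{\min}$ and $\lambda_{\max}$ of $P^{-1/2}HP^{-1/2}$. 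Since $P$ is positive definite by assumption, $P^{-1/2}HP^{-1/2}$ is congruent to $H$ via a symmetric positive definite transformation; I should note that $H$ (Schur complement or regularized Hessian) is positive definite in the relevant regime, so that $\lambda_{\min}(P^{-1/2}HP^{-1/2}) > 0$ and the condition number $\kappa = \lambda_{\max}/\lambda_{\min}$ is well-defined. In fact one only needs $1 + \sum_i \underline\varepsilon_i > 0$, which is implicitly required for the stated bound to be meaningful and which follows from positive definiteness of $P^{-1/2}HP^{-1/2}$. Dividing the upper bound by the lower bound yields exactly
\[
    \kappa\bigl(P^{-1/2}HP^{-1/2}\bigr) = \frac{\lambda_{\max}}{\lambda_{\min}} \le \frac{\,1+\sum_{i=1}^p\overline\varepsilon_i\,}{\,1+\sum_{i=1}^p\underline\varepsilon_i\,}.
\]

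The argument is essentially elementary linear algebra, so there is no serious obstacle; the one point deserving care is the legitimacy of passing from per-matrix eigenvalue bounds to a bound on the Rayleigh quotient of the sum — this works because the bound $\lambda_{\min}(E_i)\le v^\top E_i v\le\lambda_{\max}(E_i)$ is valid for the \emph{same} vector $v$ simultaneously for all $i$, and summation of inequalities is then immediate. A secondary subtlety is ensuring the denominator is positive; I would either add it as a tacit consequence of $P^{-1/2}HP^{-1/2}\succ 0$ or remark that in practice the $\underline\varepsilon_i$ are close to $0$ (or mildly negative with $\sum_i\underline\varepsilon_i > -1$), which is the only regime in which the preconditioner is useful anyway.
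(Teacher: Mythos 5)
Your proposal is correct and follows essentially the same route as the paper's proof: the identical key identity $P^{-1/2}HP^{-1/2}=I+\sum_i P^{-1/2}(B_i-\widetilde B_i)P^{-1/2}$, followed by summing per-term eigenvalue bounds (the paper cites the subadditivity of $\lambda_{\max}$ and superadditivity of $\lambda_{\min}$ over sums, which is exactly what your Rayleigh-quotient argument proves). Your added remark that $1+\sum_i\underline\varepsilon_i>0$ is needed for the bound to be meaningful is a point the paper leaves implicit.
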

\begin{proof}
We have 
$$
  H = \sum_{i=1}^{p}B_i + P - \sum_{i=1}^{p}\widetilde{B}_i,
$$
and thus
$$
  P^{-1/2} H P^{-1/2} 
  = P^{-1/2}\left(\sum_{i=1}^{p}(B_i - \widetilde{B}_i)\right) P^{-1/2} + I\,.
$$
Hence
$$
  \lambda_k\left(P^{-1/2} H P^{-1/2} \right) =
  \lambda_k\left(P^{-1/2}\left(\sum_{i=1}^{p}(B_i - \widetilde{B}_i)\right) P^{-1/2}\right) + 1,\quad k=1,\ldots,n\,.
$$

Using the inequalities
$$
     \lambda_{\rm max}\left(P^{-1/2}\left(\sum_{i=1}^{p}(B_i - \widetilde{B}_i)\right)P^{-1/2}\right)
     \leq\sum_{i=1}^p\lambda_{\rm max}\left(P^{-1/2}(B_i - \widetilde{B}_i) P^{-1/2}\right)
$$ 
and
$$
     \lambda_{\rm min}\left(P^{-1/2}\left(\sum_{i=1}^{p}(B_i - \widetilde{B}_i)\right)P^{-1/2}\right)
     \geq\sum_{i=1}^p\lambda_{\rm min}\left(P^{-1/2}(B_i - \widetilde{B}_i) P^{-1/2}\right)
$$ 
we get the required result.
\end{proof}

\subsection{Rank of $H$}
Our goal is to solve efficiently linear systems with matrix $H$, which is either 
the Schur complement matrix in the IP method or the Hessian in the PDAL method. 
In both algorithms, $H$ can be written as a sum
$$
    H = \sum_{i=1}^p H^i_{\text{lmi}} + H_{\text{lin}}\,.
$$

To simplify the notation, \emph{in the following we will only focus on linear systems arising in the IP algorithm; a specification for PDAL will be given in \Cref{sec:prec_AL}}.

\medskip
Recall from \cref{eq:schurcomp} that, in the IP method, every $H^i_{\text{lmi}}$ is computed as
\begin{equation}\label{eq:H}
    H^i_{\text{lmi}} = \BAT_i (W_i\otimes W_i)\BA_i\,,
\end{equation}
where $W_i$ is the NT scaling matrix. 

In the following \cref{th:rankW}--\labelcref{th:l2}, we will omit the subscript $i$ and refer to the general problem \cref{eq:SDP} before returning to the notation of this section in \cref{th:rara}.
The next lemma shows that $W$ has the same rank as $X$. So, as $X$ is low-rank, $W$ will be low-rank. Later in this section, we will use this low-rank property of $W$ to present the preconditioners.

Let $(X,S)$ be a pair of solutions to \cref{eq:SDP}. By complementarity, we know that $XS=0$, and $\text{rank}(X)+\text{rank}(S)=k+\sigma\leq m$. Assume $k+\sigma=m$, i.e., strict complementarity. From $XS=0, X\succeq 0, S\succeq 0$, we know that $X$ and $S$ are simultaneously diagonalizable, i.e., there exists $U$ such that $X=U\Lambda_X U^\top $, $S=U\Lambda_S U^\top $, where $\Lambda_X$ and $\Lambda_S$ are the diagonal matrices of eigenvalues and $U^\top U=I$.
\begin{lemma}\label{th:rankW}
	Let $X,S$ be as above. Let $W\in\mathbb{S}^m$ be any matrix such that $X=WSW$. Then $\text{rank}(W)=\text{rank}(X)$.
\end{lemma}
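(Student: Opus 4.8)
The plan is to prove the two rank inequalities $\rank(X)\le\rank(W)$ and $\rank(W)\le\rank(X)$ separately. The first is immediate and uses no structure at all: from $X=WSW$ one has $\operatorname{null}(W)\subseteq\operatorname{null}(X)$ (if $Wv=0$ then $Xv=WSWv=0$), hence $\rank(X)\le\rank(W)$.

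For the reverse inequality I would use positive semidefiniteness of $S$, strict complementarity, and positive semidefiniteness of the scaling matrix $W$ (in the interior-point step $W$ is written as $W=GG^\top$, so $W\succeq 0$). Since $X=U\Lambda_X U^\top$ and $S=U\Lambda_S U^\top$ share the eigenvector matrix $U$ and have complementary supports with $k+\sigma=m$, the space $\RR^m$ splits orthogonally as $\RR^m=\operatorname{range}(X)\oplus\operatorname{null}(X)$, and moreover $\operatorname{null}(X)=\operatorname{range}(S)$, $\operatorname{range}(X)=\operatorname{null}(S)$; this is exactly where the strict complementarity hypothesis is used. Now for any $v\in\operatorname{null}(X)$,
\[
  0=v^\top X v=v^\top WSWv=\bigl\|S^{1/2}Wv\bigr\|^2 ,
\]
so $SWv=0$, i.e. $Wv\in\operatorname{null}(S)=\operatorname{range}(X)$. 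In other words, $W$ maps $\operatorname{null}(X)$ into $\operatorname{range}(X)$.

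Writing $W$ in block form with respect to the orthogonal decomposition $\RR^m=\operatorname{range}(X)\oplus\operatorname{null}(X)$,
\[
  W=\begin{pmatrix} W_{11} & W_{12}\\ W_{12}^\top & W_{22}\end{pmatrix},
\]
the property just derived is precisely $W_{22}=0$. Because $W\succeq 0$, a vanishing diagonal block forces the corresponding off-diagonal block to vanish as well, so $W_{12}=0$ and $\operatorname{range}(W)\subseteq\operatorname{range}(X)$, giving $\rank(W)\le\rank(X)=k$. Combined with the first step, $\rank(W)=\rank(X)$.

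I expect the last implication to be the only delicate point: it genuinely needs $W\succeq 0$, since for a merely symmetric $W$ satisfying $X=WSW$ one obtains $W_{22}=0$ but not $W_{12}=0$, and then $W$ may even be invertible. Hence in the write-up I would state the positive semidefiniteness of $W$ explicitly (it is guaranteed by the factorization $W=GG^\top$ arising from the NT scaling) rather than leaving it implicit. The remaining ingredients---symmetry of $X$, the existence of $S^{1/2}$, and the fact that a positive semidefinite matrix with a zero diagonal block has the associated off-diagonal block equal to zero---are all routine.
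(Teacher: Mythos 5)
Your argument follows essentially the same route as the paper's proof: both pass to the common eigenbasis of $X$ and $S$ (your orthogonal splitting $\RR^m=\operatorname{range}(X)\oplus\operatorname{null}(X)$ is the paper's substitution $Z=U^\top WU$ in different clothes), both deduce that the block of $W$ acting within $\operatorname{null}(X)$ vanishes, and both then invoke positive semidefiniteness of $W$ to kill the off-diagonal block. Your treatment of the easy inequality via $\operatorname{null}(W)\subseteq\operatorname{null}(X)$ is cleaner than the paper's, and you are right to insist that $W\succeq 0$ is a hidden hypothesis: the paper's proof also uses it silently (``As $Z$ is positive semidefinite\dots'') even though the statement only asks for $W\in\S^m$.

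There is, however, a genuine problem, and it afflicts the paper's proof as much as yours. Push your conclusion one step further: $W_{12}=W_{22}=0$ gives $\operatorname{range}(W)\subseteq\operatorname{range}(X)=\operatorname{null}(S)$, hence $SW=0$ and $X=WSW=0$. So for $\rank(X)=k\geq 1$ under strict complementarity there is \emph{no} positive semidefinite $W$ with $X=WSW$, and the lemma is vacuous under the reading you need. Without positive semidefiniteness it is simply false: $X=\operatorname{diag}(1,0)$, $S=\operatorname{diag}(0,1)$ and $W=\begin{pmatrix}a&1\\1&0\end{pmatrix}$ satisfy $WSW=X$ with $\rank(W)=2$ --- exactly the loophole you noticed when you remarked that a merely symmetric $W$ ``may even be invertible''. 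In fact $X=(WS^{1/2})(WS^{1/2})^\top$ forces $W(\operatorname{null}(X))=\operatorname{range}(X)$, so any symmetric solution has $W_{12}$ of rank $k$ and therefore $\rank(W)\geq 2k$; the claimed equality cannot hold for $k\geq 1$. The statement only makes sense asymptotically: at nearby strictly feasible iterates the NT matrix is positive definite and of full rank, and the real content is that exactly $k$ of its eigenvalues are outlying while the other $m-k$ tend to zero --- which is how the paper actually uses the result, and which its own subsequent remark (that these rank statements describe only the limit case) concedes. Neither your write-up nor the paper's establishes the literal statement; if you keep this proof you should record that the block structure you derive is incompatible with $X\neq 0$, and restate the claim in terms of the number of outlying eigenvalues of $W$ near the solution.
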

\begin{proof}
	Assume, without loss of generality, that eigenvalues of $X$ and $S$ are sorted such that
	$\Lambda_X={\rm Diag} \left(\lambda_1,\ldots,\lambda_k,0,\ldots,0\right)$ and 
	$\Lambda_S={\rm Diag} \left(0,\ldots,0,\mu_1,\ldots,\mu_{m-k}\right)$.
Then $X=WSW$ is equivalent to $U\Lambda_XU^\top =WU\Lambda_SU^\top W$, and so, as $U^\top U=I$, $\Lambda_X=U^\top WU\Lambda_SU^\top WU$. Define $Z=U^\top WU$, so that $\Lambda_X = Z^\top \Lambda_S Z$. Because $(\Lambda_X)_{i,i}=0$ for $i>k$, it must hold that $z_{:,i}^\top z_{:,i}=0$, $i>k$, where $z_{:,i}$ is the $i$-th column of $Z$. As $Z$ is positive semidefinite, it means that $Z_{i,j}=0$ for $i,j>k$, i.e., the leading $k\times k$ submatrix of $Z$ is a rank-$k$ non-zero matrix and the rest of the matrix is zero. Hence, $\rank(Z)=k$ and, as $W=UZU^\top $, so is the rank of $W$.
\end{proof}

The proof of the next lemma is straightforward.
\begin{lemma}\label{th:l1}
Let $X\in \S^m$ such that $\rank X = k$, $k\leq m$. Then $\rank (X\otimes X) = k^2$.
\end{lemma}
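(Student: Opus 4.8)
The plan is to reduce the claim to the well-known multiplicativity of rank under Kronecker products, $\rank(A\otimes B)=\rank(A)\rank(B)$, specialized to $A=B=X$. Since $X\in\S^m$ is symmetric, I would start from a spectral decomposition $X=U\Lambda U^\top$ with $U$ orthogonal ($U^\top U=I$) and $\Lambda={\rm Diag}(\lambda_1,\dots,\lambda_m)$; because $\rank X=k$, exactly $k$ of the $\lambda_i$ are nonzero.

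Next I would use the standard Kronecker identities $(U\otimes U)^\top = U^\top\otimes U^\top$ and $(AB)\otimes(CD)=(A\otimes C)(B\otimes D)$ to write
\[
  X\otimes X = (U\Lambda U^\top)\otimes(U\Lambda U^\top) = (U\otimes U)(\Lambda\otimes\Lambda)(U\otimes U)^\top .
\]
Since $U\otimes U$ is orthogonal (as $(U\otimes U)(U\otimes U)^\top = (UU^\top)\otimes(UU^\top)=I\otimes I=I$), the rank of $X\otimes X$ equals the rank of $\Lambda\otimes\Lambda$. But $\Lambda\otimes\Lambda$ is diagonal with entries $\lambda_i\lambda_j$, $i,j=1,\dots,m$, and $\lambda_i\lambda_j\neq 0$ exactly when both $\lambda_i\neq 0$ and $\lambda_j\neq 0$; there are precisely $k\cdot k=k^2$ such index pairs. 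Hence $\rank(\Lambda\otimes\Lambda)=k^2$, and therefore $\rank(X\otimes X)=k^2$.

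There is essentially no obstacle here — the statement is genuinely routine, as the paper itself remarks — so the only care needed is to invoke the orthogonality of $U\otimes U$ (or, alternatively, the range identity $\operatorname{range}(A\otimes B)=\operatorname{range}(A)\otimes\operatorname{range}(B)$) cleanly rather than appealing to an unstated "rank is multiplicative" black box. If one prefers to avoid the spectral decomposition, an equivalent one-line argument is that the nonzero eigenvalues of $X\otimes X$ are exactly the products of pairs of nonzero eigenvalues of $X$, of which there are $k^2$ counted with multiplicity, and for a symmetric matrix rank equals the number of nonzero eigenvalues.
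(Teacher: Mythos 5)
Your proof is correct: the paper gives no proof at all for this lemma (it is dismissed as ``straightforward''), and your spectral-decomposition argument---writing $X\otimes X=(U\otimes U)(\Lambda\otimes\Lambda)(U\otimes U)^\top$ with $U\otimes U$ orthogonal and counting the $k^2$ nonzero diagonal entries of $\Lambda\otimes\Lambda$---is exactly the routine argument the authors had in mind. No gaps.
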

\begin{lemma}\label{th:l2}
Let $Y\in \S^m$ such that $\rank Y = k$, $k\leq m$, and $A\in\RR^{n\times m}$, $n<m$. Then $\rank(AYA^\top )\leq k$.
\end{lemma}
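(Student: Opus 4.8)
The plan is to bound the rank of the product $AYA^\top$ by tracking where the rank can come from, namely from the factor $Y$. First I would use the fact that $\rank Y = k$ to write a rank factorization $Y = VV^\top$ — or more generally $Y = VMV^\top$ with $V\in\RR^{m\times k}$ of full column rank $k$ and $M\in\S^k$ — which is possible because $Y$ is symmetric; even without symmetry one may simply write $Y = PQ^\top$ with $P,Q\in\RR^{m\times k}$. The essential point is that $Y$ factors through a $k$-dimensional space.

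Next I would substitute this factorization into the product: $AYA^\top = (AV)M(AV)^\top$ (or $AYA^\top = (AP)(A Q)^\top$ in the non-symmetric bookkeeping). Now $AV \in \RR^{n\times k}$, so $AYA^\top$ is expressed as a product of matrices whose inner dimension is $k$. I would then invoke the standard submultiplicativity of rank, $\rank(BC)\le\min\{\rank B,\rank C\}$, applied twice: $\rank\big((AV)M(AV)^\top\big)\le \rank(AV)\le k$, using $\rank(AV)\le\min\{\rank A,\rank V\}\le k$. This immediately yields $\rank(AYA^\top)\le k$, which is exactly the claim. Note that the hypotheses $n<m$ and $A\in\RR^{n\times m}$ are not actually needed for the inequality; the bound $\rank(AYA^\top)\le\rank Y$ holds regardless, and the stated conclusion follows a fortiori.

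There is no real obstacle here — the lemma is a one-line consequence of rank submultiplicativity once the rank factorization of $Y$ is in hand. The only thing to be slightly careful about is the symmetry: since $Y\in\S^m$, the cleanest statement uses $Y = V\Lambda V^\top$ from the spectral decomposition with $V\in\RR^{m\times k}$ collecting the eigenvectors for the $k$ nonzero eigenvalues and $\Lambda\in\S^k_{}$ the corresponding (invertible) diagonal block, making $AYA^\top = (AV)\Lambda(AV)^\top$ manifestly of rank at most $k$. I would present the argument in this form, matching the tone of the preceding lemmas where the authors likewise note that such proofs are straightforward.

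Alternatively, and perhaps even shorter, one can argue via nullspaces: the range of $AYA^\top$ is contained in $A\cdot\mathrm{range}(Y)$ (since $\mathrm{range}(A^\top)\subseteq\RR^m$ is irrelevant and $\mathrm{range}(AYA^\top)\subseteq\mathrm{range}(AY)\subseteq A\cdot\mathrm{range}(Y)$), and $\dim\big(A\cdot\mathrm{range}(Y)\big)\le\dim\mathrm{range}(Y)=k$ because $A$ is a linear map and cannot increase dimension. Hence $\rank(AYA^\top)=\dim\mathrm{range}(AYA^\top)\le k$. Either route is acceptable; I would pick whichever is most consistent with the level of detail the authors want in this part of the paper.
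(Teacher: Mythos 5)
Your proof is correct and follows essentially the same route as the paper: factor $Y$ through a $k$-dimensional space and conclude that $AYA^\top=(AV)\Lambda(AV)^\top$ has rank at most $k$; the paper writes this as $Y=\sum_{i=1}^k y^{(i)}(y^{(i)})^\top$ and $AYA^\top=\sum_{i=1}^k z^{(i)}(z^{(i)})^\top$ with $z^{(i)}=Ay^{(i)}$. If anything, your version with the diagonal block $\Lambda$ is slightly more careful, since the paper's sum of outer products implicitly assumes $Y\succeq 0$ (harmless in context, where the relevant matrices are positive semidefinite), and you are right that the hypothesis $n<m$ is not actually needed for the bound.
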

\begin{proof}
Because $\rank Y = k$, we have $Y=\sum_{i=1}^k y^{(i)}(y^{(i)})^\top $ with some $y^{(i)}\in\RR^m$, $i=1,\ldots,k$. Hence $AYA^\top  = A\left(\sum_{i=1}^k y^{(i)}(y^{(i)})^\top \right)A^\top  = \sum_{i=1}^k z^{(i)}(z^{(i)})^\top $ with $z^{(i)}=Ay^{(i)}$. Therefore $\rank (AYA^\top )=\rank\sum_{i=1}^k z^{(i)}(z^{(i)})^\top  \leq k$. A strict inequality occurs, trivially, when $n<k$. It can also occur when $z^{(i)}$, $i=1,\ldots,k$, are linearly dependent.
\end{proof}

By combining the above results we get the following theorem.
\begin{theorem}\label{th:rara}
Let, for some $i\in\{1,\ldots,p\}$, $W_i$ be the scaling matrix from \Cref{sec:IP} and let $\rank W_i = k$. Let further $H^i_{{\rm lmi}}$ be defined as in \cref{eq:H}. Then $\rank H^i_{\rm{lmi}} \leq\ k^2$.
\end{theorem}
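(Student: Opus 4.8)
The plan is to assemble the claim from \cref{th:l1} and \cref{th:l2}, after reinstating the block subscript $i$ that was suppressed in those two lemmas. First I would record that, by hypothesis, $W_i$ is symmetric with $\rank W_i = k$, so \cref{th:l1} applied with $X=W_i$ gives that $W_i\otimes W_i\in\S^{m_i^2}$ has rank exactly $k^2$. Moreover $W_i$ is positive semidefinite (this is what the decomposition $W=UZU^\top$ in the proof of \cref{th:rankW} shows in the rank-deficient regime), hence so is $W_i\otimes W_i$.

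Next I would write $H^i_{\rm lmi}=\BAT_i(W_i\otimes W_i)\BA_i$ in the form $AYA^\top$ with $A:=\BAT_i\in\RR^{n\times m_i^2}$ and $Y:=W_i\otimes W_i$, and conclude $\rank H^i_{\rm lmi}\le\rank Y=k^2$. This is essentially \cref{th:l2}; the one caveat is that \cref{th:l2} is stated for the ``wide'' case $n<m$, whereas in our setting \cref{assum:a5} gives $m_i\ll n$, so that $m_i^2<n$ typically and the dimension hypothesis of \cref{th:l2} fails. This is harmless: the proof of \cref{th:l2} never uses $n<m$, only the rank-one decomposition of $Y$. Concretely, writing $Y=\sum_{j=1}^{k^2}v^{(j)}(v^{(j)})^\top$ with $v^{(j)}\in\RR^{m_i^2}$ (possible since $Y$ is positive semidefinite of rank $k^2$), one obtains $H^i_{\rm lmi}=\sum_{j=1}^{k^2}\bigl(\BAT_i v^{(j)}\bigr)\bigl(\BAT_i v^{(j)}\bigr)^\top$, a sum of $k^2$ rank-one matrices, whence $\rank H^i_{\rm lmi}\le k^2$.

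There is no genuinely hard step here: the argument is a short chaining of the preceding lemmas, and the only point worth flagging is precisely the dimension mismatch just mentioned, which I would resolve by noting that the inequality $\rank(AYA^\top)\le\rank Y$ is dimension-free, following immediately from $A(vv^\top)A^\top=(Av)(Av)^\top$ and a rank-revealing decomposition of $Y$. Finally, as the surrounding text indicates, the same bound carries over verbatim to the LMI term of the PDAL Hessian once its analogous $\BAT_i(\,\cdot\,)\BA_i$ factorization is used in place of \cref{eq:H}.
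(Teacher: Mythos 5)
Your proposal is correct and follows essentially the same route as the paper, which simply combines \cref{th:l1} and \cref{th:l2} without further comment. Your additional observation---that the dimension hypothesis $n<m$ in \cref{th:l2} need not hold when that lemma is applied with $A=\BAT_i$ and $Y=W_i\otimes W_i$, but that the rank bound is dimension-free since it rests only on the rank-revealing decomposition of $Y$---is a valid and worthwhile clarification of a point the paper glosses over.
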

\begin{remark}
All results in this section have been presented under the assumption of exact rank of the involved matrices. This is, of course, only the limit case in our algorithms. The actual matrices have outlying eigenvalues and converge to the low-rank matrices. The message of \cref{th:rara} remains the same, though: when $W_i$ has $k$ outlying eigenvalues, then $H^i_{\rm{lmi}}$ will have at most $k^2$ outlying eigenvalues. And this, only, is the fact on which we build the preconditioner.
\end{remark}
\subsection{$H_\alpha$ preconditioner}\label{sec:Halpha}
The preconditioner introduced in this section is motivated by the work of Zhang and Lavaei \cite{Zhang_2017} and, in its derivation, we partly follow their paper; however, the new preconditioner differs in a substantial detail, as explained below. Also, our assumption about the matrix $W$ is much weaker.

In \Cref{sec:IP}, we introduced the scaling matrix $W$ and different ways of constructing it. This matrix becomes progressively ill-conditioned as the IP method approaches the solution. This ill-conditioning of $W$ is a result of \cref{th:rankW}. Assuming that the solution matrices $X_i^*$, $i=1,\ldots,p$, have ranks $k_i$, the rank of matrices $W_i$ will also tend to $k_i$. 

The main idea of the preconditioner is to decompose matrices $W_i$ accordingly to their expected rank as follows:
\begin{align}
\label{eq:w0}
W_i=W_i^0+U_iU_i^\top ,
\end{align}
where $U_i$ are $m\times k_i$ matrices of full column rank. Assume that the eigenvalues of $W_i$ are sorted as follows
$$
    \lambda_1(W_i)\leq\cdots\leq\lambda_{m-k_i}(W_i)\ll \lambda_{m-k_i+1}(W_i)\leq\cdots\leq\lambda_m(W_i)\,.
$$
Decomposition~\cref{eq:w0} can be readily computed by spectral decomposition of $W_i$:
\begin{align*}
W_i=\begin{bmatrix}
V_i^s &V_i^l\\
\end{bmatrix}
\begin{bmatrix}
\Lambda_i^s & 0 \\
0 & \Lambda_i^l\\
\end{bmatrix}
\begin{bmatrix}
V_i^s &V_i^l\\
\end{bmatrix}^\top ,
\end{align*}
where $\Lambda_i^s = {\rm diag}(\lambda_1(W_i),\ldots,\lambda_{m-k_i}(W_i))$ and $\Lambda_i^l = {\rm diag}(\lambda_{m-k_i+1}(W_i),\ldots,\lambda_m(W_i))$. Then, by choosing $\tau_i$ such that $\lambda_{1}(W_i)\leq \tau_i <\lambda_{m-k_i}(W_i)$, we obtain the following form of~\cref{eq:w0}:
\begin{equation}
\label{eq:wsplit}
W_i= \underbrace{\begin{bmatrix}
	V_i^s &V_i^l\\
	\end{bmatrix}
	\begin{bmatrix}
	\Lambda_i^s & 0 \\
	0 & \tau_i I\\
	\end{bmatrix}
	\begin{bmatrix}
	V_i^s &V_i^l\\
	\end{bmatrix}^\top }_{W_i^0}
+\underbrace{ V_i^l(\Lambda _i^l - \tau I)(V_i^l)^\top }_{U_iU_i^\top }.
\end{equation}

Recall now the form of the Schur complement matrix for problem~\cref{eq:SDPP}:
\begin{align}
\label{Hmorelmi}
H=\sum_{i=1}^{p}\BAT_i (W_i\otimes W_i)\BA_i+D^\top X_{\text{lin}}S_{\text{lin}}^{-1}D,
\end{align}
in which
$X_{\text{lin}}=\text{diag}(x_{\text{lin}})$ and $S_{\text{lin}}=\text{diag}(s_{\text{lin}})$. 

\begin{remark}{
Formally, we could treat the linear constraints (if present) as matrix constraints with diagonal matrices and perform the above decomposition for these constraints, too. However, it is rather unlikely that the corresponding part of the solution, vector $x_{\rm lin}$ would be ``low rank'', i.e., that only a very few of the linear constraints would be active. This fact could then ruin the whole idea. Here we propose to treat the linear constraints as ``full rank'' and add the matrix $D^\top X_{\text{lin}}S_{\text{lin}}^{-1}D$ to the preconditioner {as is}. In our application, the linear constraints only consist of upper and lower bounds (in the dual formulation), hence the matrix $D^\top X_{\text{lin}}S_{\text{lin}}^{-1}D$ is diagonal and satisfies \cref{assum:a4}.
}
\end{remark}
By substituting the splittings \cref{eq:wsplit} for $i=1,\dots,p$ into the matrix \cref{Hmorelmi}, we get
\begin{align*}
H&=\sum_{i=1}^{p}\BA_i^\top \left((W_i^0+U_iU_i^\top )\otimes (W_i^0+U_iU_i^\top )\right)\BA_i+D^\top X_{\text{lin}}S_{\text{lin}}^{-1}D\\
\begin{split}=\sum_{i=1}^{p}\BA_i^\top \left(W_i^0\otimes W_i^0+U_iU_i^\top \otimes W_i^0+W_i^0\otimes U_iU_i^\top +U_iU_i^\top \otimes U_iU_i^\top \right)\BA_i\\+D^\top X_{\text{lin}}S_{\text{lin}}^{-1}D.\end{split}
\end{align*}
Using the identity $\BAT (\Phi\otimes \Xi)\BA=\BAT (\Xi\otimes \Phi)\BA$ for any $\Phi,\Xi\in\RR^{m\times m}$ with $\BA$ defined as above (\cite[Lemma 6]{Zhang_2017} and \cite[Chap.\,4.2, Problem 25]{horn1991topics}), we obtain
\begin{align*}
H=\sum_{i=1}^{p}\BA_i^\top (W_i^0\otimes W_i^0)\BA_i+\sum_{i=1}^{p}\BA_i^\top (U_i\otimes \Gamma_i)(U_i\otimes \Gamma_i)^\top \BA_i+D^\top X_{\text{lin}}S_{\text{lin}}^{-1}D,
\end{align*}
where $\Gamma_i$ is any matrix satisfying $\Gamma_i\Gamma_i^\top =2W_i^0+U_iU_i^\top $. Next we define $V_i=\BAT_i (U_i\otimes \Gamma_i)$ for $i=1,\dots,p$. Then,
\begin{align}
\label{modifiedH}
H&=\sum_{i=1}^{p}\BAT_i (W_i^0\otimes W_i^0)\BA_i+D^\top X_{\text{lin}}S_{\text{lin}}^{-1}D+\sum_{i=1}^{p}V_iV_i^\top &\nonumber\\
&=\sum_{i=1}^{p}\underbrace{\BAT_i (W_i^0\otimes W_i^0)\BA_i}_{H_i^0}+D^\top X_{\text{lin}}S_{\text{lin}}^{-1}D+\tilde{V}\tilde{V}^\top \,,&
\end{align}
where $\tilde{V}=[V_1,\dots, V_p]$.

If we approximate $\BAT_i (W_i^0\otimes W_i^0)\BA_i$ by $\tau_i^2 I$ in \cref{modifiedH}, the result yields to ${H}_\alpha$ preconditioner, which is
\begin{align}
\label{halph}
{H}_\alpha=\underbrace{\left(\sum_{i=1}^{p}\tau_i^2I+D^\top X_{\text{lin}}S_{\text{lin}}^{-1}D\right)}_{{\BA}_\alpha}+\tilde{V}\tilde{V}^\top ,
\end{align}
or, in short,
\begin{align}
\label{Halpha}
{H}_\alpha={\BA}_\alpha+\tilde{V}\tilde{V}^\top\,.
\end{align}
This is the first preconditioner that we introduce. By using the Sherman-Morrison-Woodbury (SMW) formula, its inverse will be
\begin{align}
\label{halphsmw}
{H}_\alpha^{-1}={\BA}_\alpha^{-1}(I-\tilde{V}\Theta^{-1}\tilde{V}^\top  {\BA}_\alpha^{-1})\,,
\end{align}
where $\Theta= I+\tilde{V}^\top {\BA}_\alpha^{-1}\tilde{V}$ is the Schur complement. Notice that, by \cref{assum:a4}, the inverse of ${\BA}_\alpha$ is inexpensive.


\begin{theorem}\label{th:precond1}
    Assume that, for $i=1,\ldots,p$,
    $$
        \lambda_{\rm max}\left(H_\alpha^{-1/2}(H^0_i - \tau_i^2I) H_\alpha^{-1/2}\right) \leq \overline\varepsilon_i\
\quad\mbox{and}\quad
        \lambda_{\rm min}\left(H_\alpha^{-1/2}(H^0_i - \tau_i^2I) H_\alpha^{-1/2}\right) \geq \underline\varepsilon_i\,.
    $$
    Then 
    $$
        \kappa\left(H_\alpha^{-1/2} H H_\alpha^{-1/2}\right) \leq \frac{\displaystyle 1+\sum_{i=1}^p\overline\varepsilon_i}{\displaystyle 1+\sum_{i=1}^p\underline\varepsilon_i}\,.
    $$
\end{theorem}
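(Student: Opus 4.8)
The plan is to read Theorem~\ref{th:precond1} as the instance of the general Theorem~\ref{th:precond} obtained from the explicit splitting of $H$ already derived in \cref{modifiedH}. Concretely, I would make the identification $B_i := H_i^0 = \BAT_i(W_i^0\otimes W_i^0)\BA_i$ and $\widetilde B_i := \tau_i^2 I$ for $i=1,\ldots,p$, and lump the remaining terms into $\widehat H := D^\top X_{\text{lin}}S_{\text{lin}}^{-1}D + \tilde V\tilde V^\top$. With this choice, \cref{modifiedH} is precisely the statement $H = \widehat H + \sum_{i=1}^p B_i$, and \cref{halph} (equivalently \cref{Halpha}) is precisely the statement $P := \widehat H + \sum_{i=1}^p \widetilde B_i = H_\alpha$. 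So the objects of Theorem~\ref{th:precond} and those of the present theorem coincide, and $B_i - \widetilde B_i = H_i^0 - \tau_i^2 I$.

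Next I would verify the one standing hypothesis of Theorem~\ref{th:precond}, namely that $P = H_\alpha$ is positive definite, so that $H_\alpha^{-1/2}$ and the stated condition number make sense. This is immediate: $\sum_{i=1}^p \tau_i^2 I$ is positive definite because each $\tau_i>0$ (recall $\tau_i$ is chosen with $0\le\lambda_1(W_i)\le\tau_i<\lambda_{m-k_i}(W_i)$, and under Slater's condition, \cref{assum:a2}, the NT scaling matrices $W_i$ are positive definite, hence $\tau_i>0$), while $D^\top X_{\text{lin}}S_{\text{lin}}^{-1}D$ and $\tilde V\tilde V^\top$ are positive semidefinite, being of the form $D^\top(\text{nonnegative diagonal})D$ and a Gram matrix, respectively. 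Therefore $H_\alpha\succ0$.

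It then only remains to observe that the two eigenvalue bounds \emph{assumed} in Theorem~\ref{th:precond1}, $\lambda_{\rm max}(H_\alpha^{-1/2}(H_i^0-\tau_i^2I)H_\alpha^{-1/2})\le\overline\varepsilon_i$ and $\lambda_{\rm min}(H_\alpha^{-1/2}(H_i^0-\tau_i^2I)H_\alpha^{-1/2})\ge\underline\varepsilon_i$, are exactly the hypotheses of Theorem~\ref{th:precond} under the substitution $P=H_\alpha$, $B_i-\widetilde B_i = H_i^0-\tau_i^2I$. Invoking Theorem~\ref{th:precond} verbatim gives $\kappa\bigl(H_\alpha^{-1/2}HH_\alpha^{-1/2}\bigr) = \kappa\bigl(P^{-1/2}HP^{-1/2}\bigr) \le \bigl(1+\sum_{i=1}^p\overline\varepsilon_i\bigr)\big/\bigl(1+\sum_{i=1}^p\underline\varepsilon_i\bigr)$, which is the assertion.

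There is no genuinely hard step in this argument; the only points needing care are (i) making sure the algebraic identity \cref{modifiedH} is quoted with the $W_i^0\otimes W_i^0$ block kept intact, so that $B_i$ really equals $H_i^0$ and not a symmetrized or reshuffled variant, and (ii) confirming $\tau_i>0$, since otherwise $H_\alpha$ would only be positive semidefinite and neither $H_\alpha^{-1/2}$ nor the condition-number bound would be meaningful. If a self-contained proof were preferred over citing the general theorem, one could instead re-run the three-line computation from the proof of Theorem~\ref{th:precond} ($H_\alpha^{-1/2}HH_\alpha^{-1/2} = I + \sum_i H_\alpha^{-1/2}(H_i^0-\tau_i^2I)H_\alpha^{-1/2}$, then bound the extreme eigenvalues by subadditivity of $\lambda_{\rm max}$ and superadditivity of $\lambda_{\rm min}$), but the reduction above is cleaner.
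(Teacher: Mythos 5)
Your proposal is correct and follows exactly the paper's own proof: the paper likewise deduces Theorem~\ref{th:precond1} from Theorem~\ref{th:precond} by setting $\widehat{H}=D^\top X_{\text{lin}}S_{\text{lin}}^{-1}D+\tilde{V}\tilde{V}^\top$, $B_i = \BAT_i (W_i^0\otimes W_i^0)\BA_i$ and $\widetilde{B}_i = \tau_i^2I$. Your additional check that $H_\alpha$ is positive definite (via $\tau_i>0$) is a detail the paper leaves implicit, but it does not change the argument.
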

\begin{proof}
The result follows directly from \cref{th:precond} by setting $\widehat{H}=D^\top X_{\text{lin}}S_{\text{lin}}^{-1}D+\tilde{V}\tilde{V}^\top$, $B_i = \BAT_i (W_i^0\otimes W_i^0)\BA_i$ and $\widetilde{B}_i = \tau_i^2I$, $i=1,\ldots,p$.

\end{proof}

\begin{algorithm}
\caption{Solution of the linear system $H\Delta y=r$ by PCG}
\label{algo:PCG}
Given data $k>0$ (solution rank), $W_i\in \mathbb{S}_{++}^m$, $i=1,\ldots,p$, $r\in \mathbb{R}^n$ (right-hand side) an and initial iterate~$\Delta y^0$.
\begin{algorithmic}[1]
\Procedure{Setting up preconditioner $H_\alpha$}{}
\For{$i=1,\ldots,p$}\Comment{computing the decomposition \cref{modifiedH}}
\State
		Compute eigenvalue decomposition $W_i=V_i\Lambda_i V_i^\top $ and
		set $\tau_i=\lambda_\text{min}(W_i)$.
\State
		Form the matrices $W_i^0$ and $U_i$ by \cref{eq:wsplit}.
		\State Compute
		the Cholesky factorization $\Gamma_i\Gamma_i^\top =2W_i^0+U_iU_i^\top $.
		\EndFor
\State Compute the Schur complement
		$\Theta= I+\tilde{V}^\top {\BA}_\alpha^{-1}\tilde{V}$ and
		its Cholesky factorization $\Theta = LL^\top$.
\EndProcedure
\Procedure{PCG}{}
\Statex Use standard PCG algorithm to solve $H\Delta y = r$ until 
$ \|H\Delta y - r\|/\|r\| \leq \varepsilon_{\scriptscriptstyle \rm CG}$.
\Statex At each PCG iteration:
\State Compute the matrix-vector product with $H$ using \cref{eq:linearsys}.
\State Compute the matrix-vector product with $H_\alpha^{-1}$
		by means of the SMW in \cref{halphsmw},
		using $\Theta^{-1}={L}^{-\!\top}{L}^{-1}$.
\EndProcedure
\end{algorithmic}
\end{algorithm}

\paragraph{Complexity of the PCG}
\begin{itemize}
\item We compute the Schur complement
	$$\Theta= I+[\BA_1^\top (U_1\otimes \Gamma_1) \dots \BA_p^\top (U_1\otimes \Gamma_p)]^\top {\BA}_\alpha^{-1}[\BA_1^\top (U_1\otimes \Gamma_1) \dots \BA_p^\top (U_p\otimes \Gamma_p)],$$
block-wise, with the $(i,j)$-block $\Theta_{ij} = B_i^\top B_j$, where $B_i = R^{-1} \BA_i^\top (U_i\otimes \Gamma_i)$, $i=1,\ldots,p$, and $R$ is the Cholesky factor of $\BA_\alpha$. Then, we multiply $(C_i=)R^{-1}\BA_i^\top$, which is $\mathcal{O}(n)$ flops. The sparsity of $\BA_i$ is maintained in $C_i$, and so $C_i d$ is still $\mathcal{O}(n)$ flops for an arbitrary vector $d$. Hence, as $(U_i\otimes \Gamma_i)$ has $m_ik_i$ columns, the product $C_i(U_i\otimes \Gamma_i)$ requires $\mathcal{O}(nm_i k_i)$ flops. Finally, we compute $B_i^\top B_j$, which requires $\mathcal{O}(m_i m_j k_i k_j)$ flops. Define $\hat{m}=\max\limits_{i=1,\ldots,p}m_i$ and $\hat{k}=\max\limits_{i=1,\ldots,p}k_i$. 
Forming and factorizing the Schur complement (i.e., forming the preconditioner) requires
$\mathcal{O}\left(p^2\hat{m}^2 \hat{k}^2 + pn\hat{m}\hat{k} + \hat{m}^3 \hat{k}^3\right)$ flops; the last term comes from the Cholesky factorization of~$S$.

\item Each iteration of the PCG method is dominated by the matrix-matrix product in \cref{eq:linearsys} requiring $\mathcal{O}\left(\sum_{i=1}^p m_i^3\right)$ flops and application of the preconditioner \cref{halphsmw} requiring $\mathcal{O}\left(\sum_{i=1}^p m_i k_i\right)^2 + \mathcal{O} \left(n\sum_{i=1}^p m_ik_i\right)$ flops.

\item
The number of PCG iterations is a little unpredictable, in particular, when approaching the solution of the problem. However, our experience shows that this number is usually well below 10 and does not exceed 100 in the worst problems. 
Dropping the lower-order terms and assuming $n\approx \hat{m}^2$, we will need $\mathcal{O}\left(\hat{m}^3\hat{k}^3\right)$ flops in the preconditioner and $\mathcal{O}\left(\hat{m}^3\hat{k}\right)$ flops in one PCG iteration. 
We come to the conclusion that the cost of the preconditioner is about the same as the cost of the solution of the linear system by the PCG method.
\end{itemize}

\paragraph{$\widetilde{H}$ preconditioner of \cite{Zhang_2017}}
The main idea of proposing preconditioner $H_\alpha$ comes from 
Zhang and Lavaei \cite{Zhang_2017}. In their paper, they do not consider linear constraints and approximate $\BA_i^{\!\!\top} (W_i^0\otimes W_i^0)\BA_i$ by $\tau_i^2 \BA_i^{\!\!\top}  \BA_i$ in \cref{modifiedH}, the result yielding the following preconditioner denoted in \cite{Zhang_2017} by $\tilde{H}$ (and adopted here to the case of more LMIs and additional linear constraints):
\begin{align}
\label{htil}
\widetilde{H}=\sum_{i=1}^{p}\tau_i^2\BA_i^{\!\!\top} \BA_i+D^\top X_{\text{lin}}S_{\text{lin}}^{-1}D+\tilde{V}\tilde{V}^\top \,.
\end{align}
Obviously, the only difference between $H_\alpha$ and $\widetilde{H}$ is in the first term of the sum. As this term needs to be inverted in the SMW formula, $\widetilde{H}$ requires an additional assumption that it is easily invertible. This assumption is not satisfied in our problems. Later in \Cref{sec:examples}, \cref{tab:other}, we will demonstrate that, despite being seemingly simple, $H_\alpha$ is at least as efficient as $\widetilde{H}$ and requires much less computational time.
\subsection{$H_\beta$ preconditioner}
Recall the definition of the matrix $H$:
$$
    H = \sum_{i=1}^{p}\BAT_i (W_i^0\otimes W_i^0)\BA_i+\tilde{V}\tilde{V}^\top+D^\top X_{\text{lin}}S_{\text{lin}}^{-1}D\,.
$$
It turns out that, in our numerical examples, the last term is dominating in the first iterations of IP, before the low-rank structure of $W$ is clearly recognized. This is demonstrated in \cref{fig:prec1} that shows distribution of eigenvalues of matrices $D^\top X_{\text{lin}}S_{\text{lin}}^{-1}D$ and $\sum_{i=1}^{p}\BAT_i (W_i^0\otimes W_i^0)\BA_i+ \tilde{V}\tilde{V}^\top$ in iterations 2, 17, 30 (final) in problem tru7e (see \Cref{sec:examples}).
	\begin{figure}[h]
	\begin{center}
		\resizebox{0.32\hsize}{!}
		{\includegraphics{{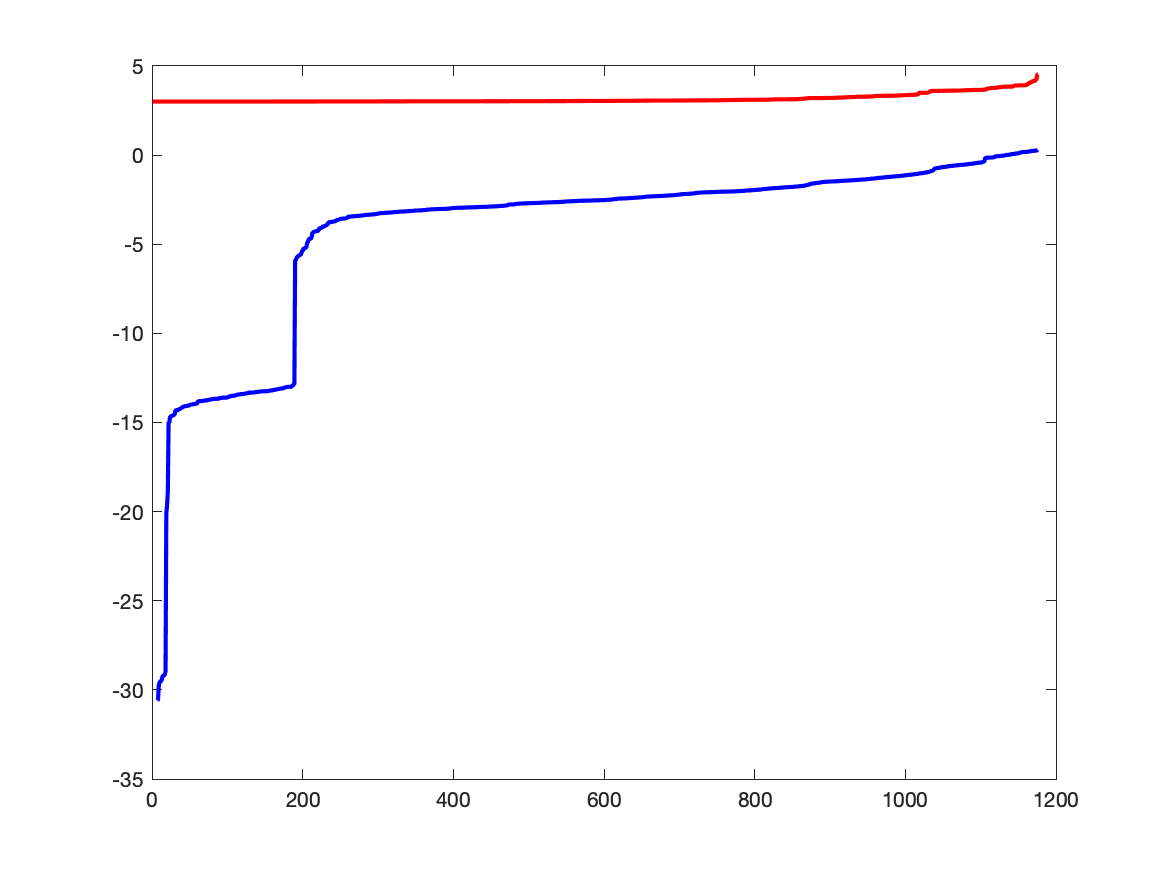}}}
		\resizebox{0.32\hsize}{!}
		{\includegraphics{{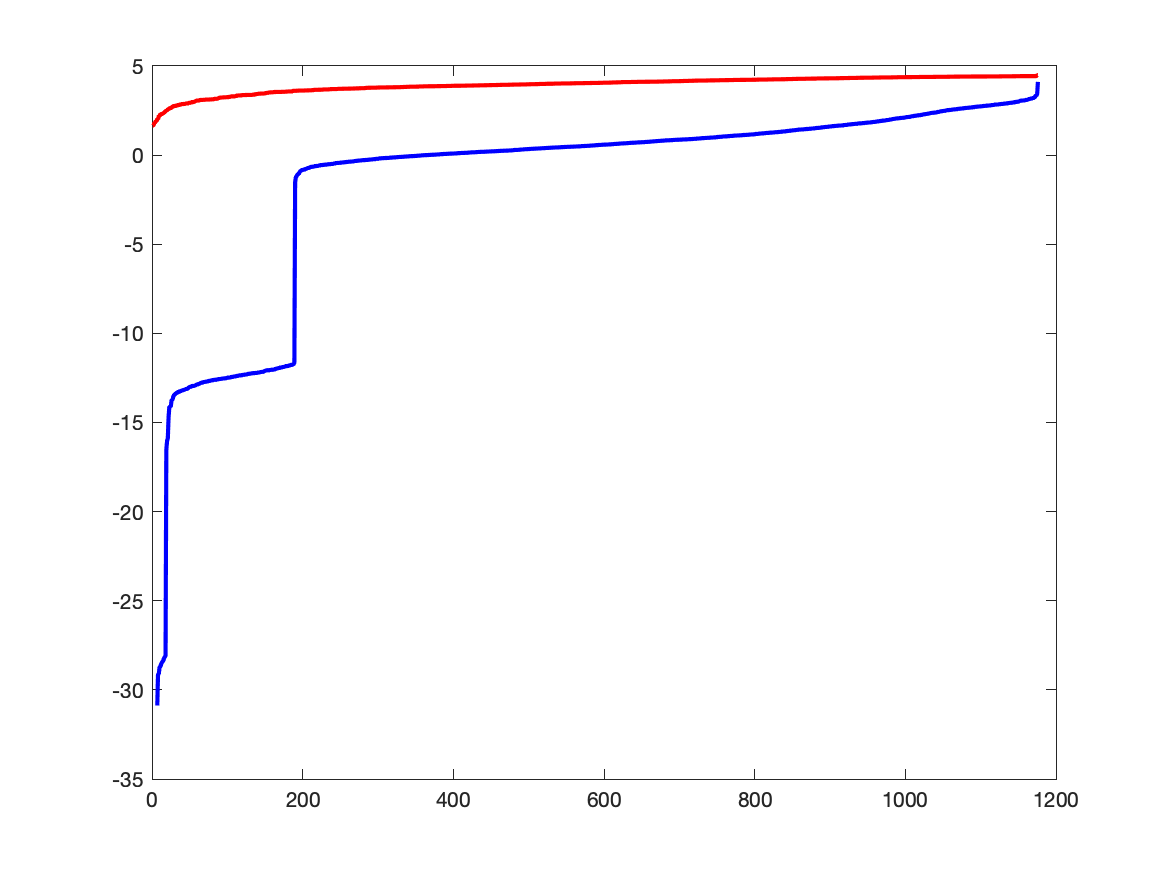}}}
		\resizebox{0.32\hsize}{!}
		{\includegraphics{{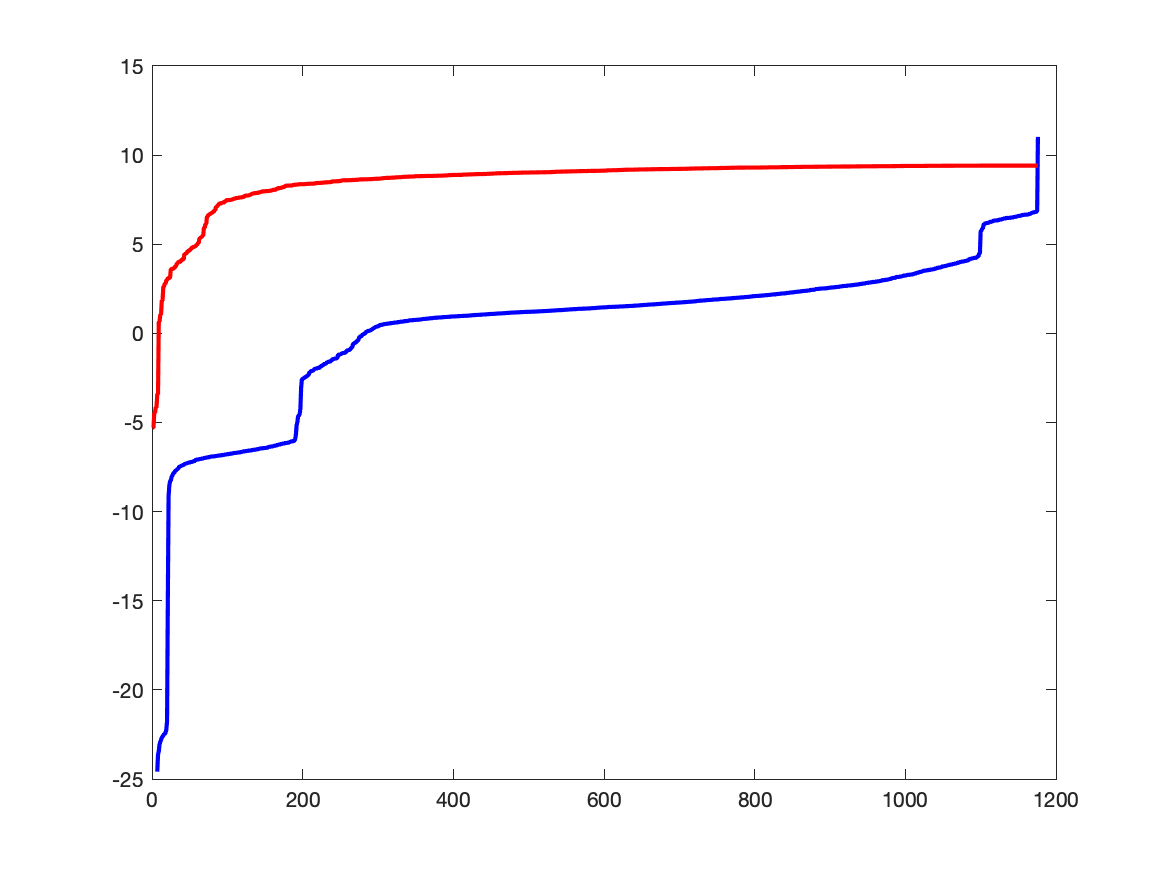}}}
	\end{center}
	\caption{Problem tru7e; eigenvalues of $D^\top X_{\text{lin}}S_{\text{lin}}^{-1}D$ (red) and $\sum_{i=1}^{p}\BA_i^\top (W_i^0\otimes W_i^0)\BA_i+ \tilde{V}\tilde{V}^\top$ (blue) in iteration 2, 17 and 30 (left to right).}
	\label{fig:prec1}
	\end{figure}
This observation lead to the idea of a simplified preconditioner called $H_\beta$ and defined as follows
\begin{align}
\label{eq:Hbeta}
H_\beta=\sum_{i=1}^{p}\tau_i^2I+D^\top X_{\text{lin}}S_{\text{lin}}^{-1}D,
\end{align}
in which $\tau_i$ is defined as in the previous section.
This matrix is easy to invert by \cref{assum:a4}; in fact, the matrix is diagonal in problems in \Cref{sec:examples}. It is therefore an extremely ``cheap'' preconditioner that is efficient in the first iterations of the IP algorithm.

\subsection{Preconditioner used in augmented Lagrangian approach}\label{sec:prec_AL}

\medskip

As has been shown previously in \Cref{Penalty/Barrier}, see \cref{eq:Hessianaug} and \cref{eq:N1}, the system matrix in the Newton systems in both, the purely primal and the primal-dual approach, is computed by the formula
\begin{equation} \label{eq:H_AL}
    H=2\sum_{i=1}^p \BAT_i (W_i\otimes V_i)\BA_i + H_{\rm{lin}},
\end{equation}
where 
\begin{align}
    H_{\rm{lin}}:&=rI + D^\top \bar{W}^{\rm{lin}}\left(y,X_i^{\rm{lin}},\pi^{\rm{lin}}\right) D ,\label{eq:Hlin}\\
    W_i:&=\frac{1}{\pi^{\rm{lmi}}}\bar{X}_i(y)=\pi^{\rm{lmi}}\mathcal{Z}_i(y)X_i^{\rm{lmi}}\mathcal{Z}_i(y),\\
    V_i:&=\pi^{\rm{lmi}}\mathcal{Z}_i(y)=-\pi^{\rm{lmi}}\left(A_i^{\rm{lmi}}(y)-\pi^{\rm{lmi}}I_i\right)^{-1}.
\end{align}
Here the lower index $i$ indicates the $i$-th LMI constraint and $A_i^{\rm{lmi}}: \mathbb{R}^n \to \mathbb{S}^{m_i}, y \mapsto \sum_{j=1}^n y_j A_i^{(j)} - C_i$. Moreover, $y$ is the current primal iterate, $X_i \in \mathbb{S}^{m_i}$, $i=1,\ldots,p$, are the matrix multipliers from the current outer iteration, $\pi^{\rm{lmi}},\pi^{\rm{lin}}$ are the current penalty parameters and $\bar{W}^{\rm{lin}}\left(y,X_i^{\rm{lin}},\pi^{\rm{lin}}\right)$ is the diagonal matrix function defined in \cref{eq:Wlin}. 

As the matrices $W_i$, $i=1,\ldots,p$, are scaled versions of the matrix multipliers, the low rank \Cref{assum:alr} directly applies to those. In addition, the eigenvalue structure of the matrices $V_i$, $i=1,\ldots,p$, is characterized by the following lemma.
\begin{lemma}\label{th:eig_Vi}
\begin{enumerate}
\item[a)] Let $y\in \mathbb{R}^n$ be feasible with respect to all matrix inequalities. Then $0 < \lambda_j(V_i) \leq 1$ for all $j=1,\ldots,m_i$ and all $i=1,\ldots,p$.
\item[b)] If strict complementarity holds in a primal solution $y^*$ with associated matrix multipliers $X^*_1,\ldots,X^*_p$ and the penalty parameter $\pi^{\rm{lmi}}$ is chosen sufficiently small, then the number of outlying eigenvalues of $V_i$ is (in a neighbourhood of the solution $y^*$) given by the rank $k_i$ of the corresponding multiplier matrix $X_i^*$ for all $i=1,\ldots,p$. 
\end{enumerate}
\end{lemma}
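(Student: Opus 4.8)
The plan is to reduce both statements to a one–dimensional analysis of the spectrum of $A_i^{\rm{lmi}}(y)$. Fix $i\in\{1,\dots,p\}$ and denote by $\mu_1(y)\le\dots\le\mu_{m_i}(y)$ the eigenvalues of the symmetric matrix $A_i^{\rm{lmi}}(y)$. Whenever $A_i^{\rm{lmi}}(y)-\pi^{\rm{lmi}}I_i\prec 0$ --- equivalently $\mu_{m_i}(y)<\pi^{\rm{lmi}}$, which is precisely the condition guaranteeing that $\mathcal Z_i(y)$, and hence $V_i$, is well defined and positive definite, and which is enforced throughout the algorithm by the penalty-parameter update --- the matrix $V_i=-\pi^{\rm{lmi}}\bigl(A_i^{\rm{lmi}}(y)-\pi^{\rm{lmi}}I_i\bigr)^{-1}$ is a primary matrix function of $A_i^{\rm{lmi}}(y)$, so its eigenvalues are exactly $\lambda_j(V_i)=\pi^{\rm{lmi}}/\bigl(\pi^{\rm{lmi}}-\mu_j(y)\bigr)$ for $j=1,\dots,m_i$.

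Part~a) is then immediate: if $y$ is feasible for the $i$-th matrix inequality, then $\mu_j(y)\le 0$, hence $0<\pi^{\rm{lmi}}\le\pi^{\rm{lmi}}-\mu_j(y)$ and therefore $0<\lambda_j(V_i)\le 1$; as this holds for every $j$ and every $i$, the claim follows.

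For part~b) I would first exploit strict complementarity at $y^*$. By complementary slackness $A_i^{\rm{lmi}}(y^*)\,X_i^*=0$, and strict complementarity means $\rank A_i^{\rm{lmi}}(y^*)+\rank X_i^*=m_i$; since $A_i^{\rm{lmi}}(y^*)\preceq 0$ this forces $A_i^{\rm{lmi}}(y^*)$ to have exactly $k_i$ zero eigenvalues and $m_i-k_i$ strictly negative ones, say $\mu_{m_i-k_i}(y^*)=-\delta_i<0$. Substituting into the formula above, at $y^*$ the matrix $V_i$ has $k_i$ eigenvalues equal to $1$ and $m_i-k_i$ eigenvalues not exceeding $\pi^{\rm{lmi}}/(\pi^{\rm{lmi}}+\delta_i)$. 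Because $y\mapsto A_i^{\rm{lmi}}(y)$ is affine, its eigenvalues are continuous in $y$ (Weyl), and $\mu\mapsto\pi^{\rm{lmi}}/(\pi^{\rm{lmi}}-\mu)$ is continuous and strictly increasing on $(-\infty,\pi^{\rm{lmi}})$, so --- after fixing $\pi^{\rm{lmi}}$ small --- there is a neighbourhood $\mathcal N$ of $y^*$ on which the $m_i-k_i$ smallest eigenvalues of $A_i^{\rm{lmi}}(y)$ stay below $-\delta_i/2$ and the $k_i$ largest stay in $(-\delta_i/4,\pi^{\rm{lmi}}/2)$. On $\mathcal N$ this yields a genuine spectral gap in $V_i$: its $k_i$ largest eigenvalues lie in $\bigl(\pi^{\rm{lmi}}/(\pi^{\rm{lmi}}+\delta_i/4),\,2\bigr)$, i.e. they are of order $1$, while the remaining $m_i-k_i$ are at most $\pi^{\rm{lmi}}/(\pi^{\rm{lmi}}+\delta_i/2)\le 2\pi^{\rm{lmi}}/\delta_i$, which tends to $0$ as $\pi^{\rm{lmi}}\to 0$. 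Hence, for $\pi^{\rm{lmi}}$ small enough, $V_i$ has precisely $k_i$ outlying eigenvalues on $\mathcal N$, as asserted.

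I expect the only point requiring care to be the order of quantifiers in part~b): one fixes $\pi^{\rm{lmi}}$ small first and only afterwards shrinks $\mathcal N$, so that the separating thresholds $\pi^{\rm{lmi}}/(\pi^{\rm{lmi}}+\delta_i/4)$ and $\pi^{\rm{lmi}}/(\pi^{\rm{lmi}}+\delta_i/2)$ remain meaningful as a gap in the sense of \Cref{assum:alr}; everything else is the scalar identity for $\lambda_j(V_i)$ together with the standard continuity of eigenvalues. No deeper difficulty is anticipated, and part~a) needs nothing beyond that identity.
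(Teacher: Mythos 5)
Your proof is correct and follows essentially the same route as the paper: both parts reduce to applying the scalar map $\psi(t)=-\pi^{\rm{lmi}}/(t-\pi^{\rm{lmi}})$ to the spectrum of $A_i^{\rm{lmi}}(y)$, using feasibility for a) and strict complementarity to pin down the $k_i$ zero eigenvalues of $A_i^{\rm{lmi}}(y^*)$ for b). Your version is in fact slightly more careful than the paper's, which states the eigenvalue gap only at $y^*$ and leaves the ``in a neighbourhood'' claim (your continuity-of-eigenvalues step and the quantifier ordering on $\pi^{\rm{lmi}}$ versus $\mathcal N$) implicit.
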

\begin{proof}
\begin{enumerate}
\item[a)] Writing $V_i$ as a primary matrix function, we observe that the eigenvalues of every $A_i^{\rm{lmi}}(y)$ are transformed by the nonlinear function $\psi: \mathbb{R} \to \mathbb{R}, t \to -\pi^{\rm{lmi}}/(t-\pi^{\rm{lmi}})$. Assuming that $y$ is feasible with respect to all matrix constraints, we have that all eigenvalues are non-positive. Now, taking the monotonicity of $\psi$, $\psi \geq 0$ on $(-\infty,\pi^{\rm{lmi}})$, $\psi(t) \to 0$ for $t \to -\infty$ and $\psi(0)=1$ into account, we see that $0 < \lambda_j(V_i) \leq 1$ for all $j=1,\ldots,m_i$ and all $i=1,\ldots,p$.
 \item[b)] Let $\lambda_i^1 \leq \ldots \leq \lambda_i^{m_i-k_i} < \lambda_i^{m_i-k_i+1} = \lambda_i^{m_i-k_i+2} = \ldots = \lambda_i^{m_i} = 0$ denote the eigenvalues of $A_i^{\rm{lmi}}(y^*)$ where $k_i$ denotes the rank of the associated multiplier matrix $X^*_i$. By, assumption {$\pi^{\rm{lmi}}$ is sufficiently small}, so we have that $\lambda_i^1 \leq \ldots \leq \lambda_i^{m_i-k_i} \ll -\pi^{\rm{lmi}} < 0$ and $0 < -\pi^{\rm{lmi}} / \lambda_i^{m_i-k_i} \ll 1$. Finally, by the nonlinear transformation $\psi$ of the eigenvalues of $V_i$, we see that $0 < \psi(\lambda_i^{j}) \ll 1$ for all $j=1,\ldots,m_i-k_i$ and $\psi(\lambda_i^{j})=1$ for all $j=m_i-k_i+1,\ldots,m_i$ and all $i=1,\ldots,p$.
\end{enumerate}
\end{proof}


\cref{th:eig_Vi} suggests how to construct preconditioners for two different situations: in the first situation, we assume for a given LMI with index $i$ that only the matrix $W_i$ has a distinct eigenvalue structure, while for $V_i$ we just assume that the eigenvalues are essentially bounded between $0$ and $1$. In the second situation, we have that both matrices $V_i$ and $W_i$ exhibit the same eigenvalue structure. We note that this is somehow similar to the situation in the interior-point setting; however there, the matrices $V_i$ and $W_i$ would be the same. 

We would like to note that in the numerical examples presented in \Cref{sec:examples}, we will be always in the first situation, which means that the eigenvalues of $V_i$ will never be clearly separated from $1$. Nevertheless, we would like to demonstrate in the following how to construct efficient preconditioners for both situations.


For this, we can use the same idea as has been presented for the preconditioner $H_\alpha$ in \Cref{sec:Halpha}.
The first step is to decompose the matrix $H$ in \cref{eq:H_AL}. We introduce the following decompositions for $V_i$ and $W_i$, $i=1,\ldots,p$, which separate small from outlying eigenvalues precisely in the same way as in \cref{eq:wsplit}:
\begin{align}
    V_i&=V_i^0+\Tilde{V_i}\Tilde{V_i}^\top , \label{vdecom}\\
    W_i&=W_i^0+\Tilde{W_i}\Tilde{W_i}^\top. \label{wdecom}
\end{align}


Based on this we can formulate the following auxiliary results:
\begin{lemma}\label{th:H_AL_Dec2}
    $H$ can be written as
    \begin{equation}\label{eq:H_AL_Dec2}
        H=H_{\rm{lin}}+H^0_\gamma+H^{\rm{lr}}_\gamma,
    \end{equation}
    where
    \begin{equation*}
    \begin{aligned}
    H^0_\gamma&=2\sum_{i=1}^p \BAT_i(W_i^0\otimes V_i)\BA_i, \nonumber\\
        H^{\rm{lr}}_\gamma&=2\sum_{i=1}^p \BAT_i (\Tilde{W}_i\otimes \Delta_i)(\Tilde{W}_i\otimes \Delta_i)^\top \BA _i,
         \end{aligned}
    \end{equation*}
    and $\Delta_i$ is chosen such that $V_i=\Delta_i \Delta_i^\top $ for all $i=1,\ldots,p$. 
\end{lemma}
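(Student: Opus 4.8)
The plan is to prove the identity by a direct expansion: I would start from the expression \cref{eq:H_AL} for $H$, substitute the spectral splitting \cref{wdecom} of each $W_i$, and leave the factors $V_i$ untouched. One preliminary point I would settle first is that every $V_i$ genuinely admits a factorization $V_i=\Delta_i\Delta_i^\top$ with $\Delta_i$ real: this is immediate from \cref{th:eig_Vi}(a), which guarantees that all eigenvalues of $V_i$ lie in $(0,1]$, hence $V_i$ is (symmetric) positive definite, so one may take for instance $\Delta_i=V_i^{1/2}$ or a Cholesky factor. Likewise, the block $\widetilde{W}_i\widetilde{W}_i^\top$ in \cref{wdecom} is the outlying part of $W_i$ obtained by exactly the same spectral splitting as in \cref{eq:wsplit}, so $\widetilde{W}_i$ is a well-defined $m_i\times k_i$ matrix.

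With these in hand, I would fix $i\in\{1,\ldots,p\}$ and insert $W_i=W_i^0+\widetilde{W}_i\widetilde{W}_i^\top$ into the $i$-th summand of \cref{eq:H_AL}. Bilinearity of the Kronecker product splits $W_i\otimes V_i$ as $(W_i^0\otimes V_i)+\bigl((\widetilde{W}_i\widetilde{W}_i^\top)\otimes V_i\bigr)$. For the second piece I would write $V_i=\Delta_i\Delta_i^\top$ and apply the mixed-product rule $(AB)\otimes(CD)=(A\otimes C)(B\otimes D)$ together with $(A\otimes C)^\top=A^\top\otimes C^\top$, taking $A=\widetilde{W}_i$, $B=\widetilde{W}_i^\top$, $C=\Delta_i$, $D=\Delta_i^\top$, to rewrite $(\widetilde{W}_i\widetilde{W}_i^\top)\otimes(\Delta_i\Delta_i^\top)$ as $(\widetilde{W}_i\otimes\Delta_i)(\widetilde{W}_i\otimes\Delta_i)^\top$. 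Then multiplying on the left by $\BAT_i$ and on the right by $\BA_i$, scaling by $2$, summing over $i=1,\ldots,p$, and adding back $H_{\rm{lin}}$, the $W_i^0$-terms collect into $H^0_\gamma=2\sum_i\BAT_i(W_i^0\otimes V_i)\BA_i$ and the outlying terms into $H^{\rm{lr}}_\gamma=2\sum_i\BAT_i(\widetilde{W}_i\otimes\Delta_i)(\widetilde{W}_i\otimes\Delta_i)^\top\BA_i$, which is precisely \cref{eq:H_AL_Dec2}.

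I do not expect a genuine obstacle here: this is essentially the same bookkeeping as in the derivation of the $H_\alpha$ preconditioner in \Cref{sec:Halpha}, only simpler, because only $W_i$ (and not both Kronecker factors, as in $W_i\otimes W_i$) is decomposed, so there is no cross term to absorb and the symmetrization identity $\BAT(\Phi\otimes\Xi)\BA=\BAT(\Xi\otimes\Phi)\BA$ is not needed. The only places that require a little care are invoking \cref{th:eig_Vi}(a) to justify the existence of the factor $\Delta_i$ (i.e.\ positive definiteness of $V_i$) and keeping the order of factors straight in the mixed-product step so that the low-rank term comes out as a clean outer product $(\,\cdot\,)(\,\cdot\,)^\top$, which is what makes it amenable to the Sherman--Morrison--Woodbury treatment later.
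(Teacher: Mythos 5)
Your proposal is correct and follows essentially the same route as the paper's proof: substitute the splitting \cref{wdecom} into \cref{eq:H_AL}, use bilinearity of the Kronecker product, write $V_i=\Delta_i\Delta_i^\top$, and apply the mixed-product rule to obtain the outer-product form of $H^{\rm{lr}}_\gamma$. Your additional remark justifying the existence of $\Delta_i$ via the positive definiteness of $V_i$ (from \cref{th:eig_Vi}) is a small point of care that the paper leaves implicit, but it does not change the argument.
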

\begin{proof}
It holds that
 \begin{equation*}
     \begin{split}
     H&=H_{\rm{lin}}+2\sum_{i=1}^p\BAT_i(W_i\otimes V_i)\BA_i,\\
        &=H_{\rm{lin}}+2\sum_{i=1}^p\BAT_i(W_i^0\otimes V_i)\BA_i+2\sum_{i=1}^p\BAT_i(\Tilde{W}_i\Tilde{W}_i^\top \otimes V_i)\BA_i,\\
        &=H_{\rm{lin}}+2\sum_{i=1}^p\BAT_i(W_i^0\otimes V_i)\BA_i+2\sum_{i=1}^p\BAT_i(\Tilde{W}_i\Tilde{W}_i^\top \otimes \Delta_i \Delta_i^\top)\BA_i\,.
     \end{split}
 \end{equation*}
Now, \cref{eq:H_AL_Dec2} follows using standard properties of the Kronecker product.
\end{proof}

\begin{lemma}\label{th:H_AL_Dec1}
    $H$ can be written as
    \begin{equation}\label{eq:H_AL_Dec1}
        H=H_{\rm{lin}}+H^0_\delta+H^{\rm{lr}}_\delta,
    \end{equation}
    where
    $$\begin{aligned}
    H^0_\delta&=2\sum_{i=1}^p \BAT_i(W_i^0\otimes V_i^0)\BA_i, \nonumber\\
    H^{\rm{lr}}_\delta&=2\sum_{i=1}^p \BAT_i\left((\Tilde{W}_i\otimes \Theta_i)(\Tilde{W}_i\otimes \Theta_i)^\top + (\Tilde{V}_i\otimes \Gamma_i)(\Tilde{V}_i\otimes \Gamma_i)^\top\right)\BA_i\,,
    \nonumber
    \end{aligned}$$
    and the matrices $\Gamma_i$ and $\Theta_i$ are chosen such that $\Gamma_i\Gamma_i^\top =W_i^0+\frac{1}{2}\Tilde{W}_i\Tilde{W}_i^\top $ and  $\Theta_i \Theta_i^\top =V_i^0+\frac{1}{2}\Tilde{V}_i\Tilde{V}_i^\top $ for all $i=1,\ldots,p$.
\end{lemma}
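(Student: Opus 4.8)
The plan is to mirror the argument just used for \cref{th:H_AL_Dec2}, but now expand \emph{both} low-rank splittings \cref{vdecom} and \cref{wdecom} at the same time and regroup the four resulting Kronecker terms. Starting from \cref{eq:H_AL}, I would substitute $W_i = W_i^0 + \Tilde{W}_i\Tilde{W}_i^\top$ and $V_i = V_i^0 + \Tilde{V}_i\Tilde{V}_i^\top$ into $W_i\otimes V_i$ and distribute, obtaining
\[
W_i\otimes V_i = W_i^0\otimes V_i^0 + W_i^0\otimes\Tilde{V}_i\Tilde{V}_i^\top + \Tilde{W}_i\Tilde{W}_i^\top\otimes V_i^0 + \Tilde{W}_i\Tilde{W}_i^\top\otimes\Tilde{V}_i\Tilde{V}_i^\top .
\]
The first term, conjugated by $\BAT_i(\cdot)\BA_i$, summed over $i$ and scaled by $2$, is exactly $H^0_\delta$. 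It then remains to show that the three ``cross'' terms, after the same conjugation and summation, equal $2\sum_i\BAT_i\big((\Tilde{W}_i\otimes\Theta_i)(\Tilde{W}_i\otimes\Theta_i)^\top + (\Tilde{V}_i\otimes\Gamma_i)(\Tilde{V}_i\otimes\Gamma_i)^\top\big)\BA_i$.

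Next I would compute the two symmetric-square terms on the right using the mixed-product property of the Kronecker product: $(\Tilde{W}_i\otimes\Theta_i)(\Tilde{W}_i\otimes\Theta_i)^\top = (\Tilde{W}_i\Tilde{W}_i^\top)\otimes(\Theta_i\Theta_i^\top)$, and similarly for the $\Gamma_i$ term. Substituting the defining relations $\Theta_i\Theta_i^\top = V_i^0 + \tfrac12\Tilde{V}_i\Tilde{V}_i^\top$ and $\Gamma_i\Gamma_i^\top = W_i^0 + \tfrac12\Tilde{W}_i\Tilde{W}_i^\top$ gives
\[
\Tilde{W}_i\Tilde{W}_i^\top\otimes V_i^0 + \tfrac12\Tilde{W}_i\Tilde{W}_i^\top\otimes\Tilde{V}_i\Tilde{V}_i^\top + \Tilde{V}_i\Tilde{V}_i^\top\otimes W_i^0 + \tfrac12\Tilde{V}_i\Tilde{V}_i^\top\otimes\Tilde{W}_i\Tilde{W}_i^\top .
\]
The key device is the symmetrization identity $\BAT_i(\Phi\otimes\Xi)\BA_i = \BAT_i(\Xi\otimes\Phi)\BA_i$, valid since the $A_j^{(i)}$ are symmetric (the same identity used in the derivation of $H_\alpha$): applying it to the last two terms swaps them to $W_i^0\otimes\Tilde{V}_i\Tilde{V}_i^\top$ and $\tfrac12\Tilde{W}_i\Tilde{W}_i^\top\otimes\Tilde{V}_i\Tilde{V}_i^\top$, whereupon the two half-terms merge and exactly the three cross terms of the expansion reappear. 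Summing over $i=1,\dots,p$ and adding $H_{\rm lin}$ then yields \cref{eq:H_AL_Dec1}.

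I would also note, for well-posedness, that the factors $\Gamma_i,\Theta_i$ exist because $W_i^0 + \tfrac12\Tilde{W}_i\Tilde{W}_i^\top$ and $V_i^0 + \tfrac12\Tilde{V}_i\Tilde{V}_i^\top$ are positive definite: $W_i^0$ and $V_i^0$ are positive definite by the spectral construction (cf.\ \cref{eq:wsplit}) and the rank-$k_i$ corrections are positive semidefinite; a Cholesky-type factorization therefore exists. I do not expect a real obstacle here, since the lemma is a purely algebraic identity; the one place that invites a slip is the bookkeeping of which Kronecker factor is on the left versus the right when the symmetrization identity is invoked, so I would apply that identity to one term at a time rather than to the whole sum at once. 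The substantive payoff — that this double splitting makes $P := H_{\rm lin} + H^0_\delta + \text{(a }\tau\text{-type approximation of }H^0_\delta)$ invertible by a Sherman--Morrison--Woodbury update with the low-rank block $H^{\rm lr}_\delta$ — is a matter for the subsequent preconditioner construction, not for this lemma.
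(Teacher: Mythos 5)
Your proposal is correct and follows essentially the same route as the paper's proof: both expand $W_i\otimes V_i$ via the two splittings, isolate the $W_i^0\otimes V_i^0$ term, and absorb the three cross terms into the two symmetric low-rank factors using the swap identity $\BAT_i(\Phi\otimes\Xi)\BA_i=\BAT_i(\Xi\otimes\Phi)\BA_i$ together with the mixed-product property of the Kronecker product. The only cosmetic difference is direction — you verify the identity by expanding $H^{\rm lr}_\delta$ and matching, while the paper regroups the cross terms forward into $\Tilde{W}_i\Tilde{W}_i^\top\otimes\Theta_i\Theta_i^\top+\Tilde{V}_i\Tilde{V}_i^\top\otimes\Gamma_i\Gamma_i^\top$ — which is logically equivalent.
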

\begin{proof}
 Using \cref{vdecom} and \cref{wdecom} and the definitions of $\Gamma_i, \Theta_i$, $i=1,\ldots,p$, we see: 
\begin{equation*}
    \begin{split}
        H&=H_{\rm{lin}}+2\sum_{i=1}^p\BAT_i(W_i\otimes V_i)\BA_i,\\
        &=H_{\rm{lin}}+2\sum_{i=1}^p\BAT_i\left((W_i^0+\Tilde{W}_i\Tilde{W}_i^\top ) \otimes (V_i^0+\Tilde{V}_i\Tilde{V}_i^\top ) \right)\BA_i\\
        &=H_{\rm{lin}}+2\sum_{i=1}^p\BAT_i(W_i^0\otimes V_i^0)\BA_i\\
        &\qquad +2\sum_{i=1}^p\BAT_i\left((V_i^0+\frac{1}{2}\Tilde{V}_i\Tilde{V}_i^\top )\otimes\Tilde{W}_i\Tilde{W}_i^\top +(W_i^0+\frac{1}{2}\Tilde{W}_i\Tilde{W}_i^\top )\otimes\Tilde{V}_i\tilde{V}_i^\top \right)\BA_i \\
        &=H_{\rm{lin}}+2\sum_{i=1}^p\BAT_i(W_i^0\otimes V_i^0)\BA_i\\
        &\qquad +2\sum_{i=1}^p\BAT_i\left(\Tilde{W}_i\Tilde{W}_i^\top \otimes\Theta_i \Theta_i^\top +\Tilde{V}_i\tilde{V}_i^\top \otimes \Gamma_i \Gamma_i^\top \right)\BA_i\,.
    \end{split}
\end{equation*}
Now, again, \cref{eq:H_AL_Dec1} follows using standard properties of the Kronecker product.
\end{proof}

Finally, efficient preconditioners can be built on the basis of the decompositions \cref{eq:H_AL_Dec2} and \cref{eq:H_AL_Dec1} by approximating the $H^0_\delta$ and $H^0_\gamma$ -terms as follows:
 \begin{equation*}
     \begin{split}
     H_\gamma&=H_{\rm{lin}}+\sum_{i=1}^p\tau_i^{(1)}\tau_i^{(2)} {\rm diag}\left(\BA_{i} \BAT_i\right)+H^{\rm{lr}}_\delta,\\
     H_\delta&=H_{\rm{lin}}+\sum_{i=1}^p\tau_i^{(1)}\tau_i^{(2)} {\rm diag}\left(\BA_{i} \BAT_i\right)+H^{\rm{lr}}_\gamma.
     \end{split}
 \end{equation*}
Here, for all $i=1,\ldots,p$, we choose $\tau_i^{(1)}$ to be the smallest eigenvalue of $W_i^0$ multiplied by 10 and $\tau_i^{(2)}$ to be the average of the eigenvalues of $V_i^0$ or $V_i$, respectively. The expression ${\rm diag}\left(\BA_{i} \BAT_i\right)$ denotes a diagonal matrix of size $m_i\times m_i$ computed from $\BA_{i} \BAT_i$.

We finally note that the terms $H_{\rm{lin}}$ and $H^{\rm{lr}}_\gamma$, $H^{\rm{lr}}_\delta$ can be treated using the SMW-formula exactly as has been demonstrated in \cref{halphsmw}. Moreover, thanks to \cref{th:H_AL_Dec2} and \cref{th:H_AL_Dec1}, \cref{th:precond} applies to the preconditioners presented above and also the complexity formulas are the same as presented for $H_{\alpha}$ preconditioner due to their analogous construction.


\section{Application: truss topology optimization}\label{sec:truss}
\subsection{Truss notation}
\emph{The notation used throughout \Cref{sec:truss} is specific for this application and unrelated to the rest of the paper.}

By truss we understand a mechanical structure, an assemblage of pin-jointed uniform straight
bars made of elastic material, such as steel or aluminium. The bars can only carry axial tension and compression. We denote by $m$ the number of bars and
by $N$ the number of joints. The positions of the joints are collected
in a vector $y$ of dimension $\tn:=dim\cdot N$ where $dim$ is the
spatial dimension of the truss.
The material properties of bars are characterized by their Young's
moduli $E_i$, the  bar lengths are denoted by $\ell_i$ and bar
volumes by $t_i$, $i=1,\ldots,m$.

For an $i$-th bar, let $\gamma_i$ be the vector of directional cosines; e.g., for $dim=2$, we define
$$
    \gamma_i = {\frac{1}{\ell_i}}\left(
    -(y_1^{(k)}-y_1^{(j)}),\quad -(y_2^{(k)}-y_2^{(j)}),\quad
    y_1^{(k)}-y_1^{(j)},\quad
    y_2^{(k)}-y_2^{(j)}
    \right)^\top ,
$$
where $y^{(j)}$ and $y^{(k)}$ are the end-points of the bar.

Let $f\in\RR^{\tn}$ be a load vector of nodal forces. The response of
the truss to the load $f$ is measured by nodal displacements collected
in a displacement vector $u\in\RR^{\tn}$. Some of the displacement
components may be restricted: a node can be fixed in a wall, then the
corresponding displacements are prescribed to be zero. The number of
free nodes multiplied by the spatial dimension will be denoted by $n$
and we will assume that $f\in\RR^n$ and $u\in\RR^n$.

We introduce the bar stiffness matrices $K_i$ and
assemble them in the global stiffness matrix of the truss
\begin{equation}
    K(t) = \sum_{i=1}^m t_i K_i  = \sum_{i=1}^m t_i \frac{E_i}{\ell_i^2}\gamma_i \gamma_i^\top ,\quad i=1,\ldots,m
\label{ama:SINGLE}
\end{equation}
and introduce the equilibrium equation
\begin{equation}
    K(t)u = f\,.
\label{equilibrium:SINGLE}
\end{equation}
\begin{Assumption}\label{assum:6}
	$K(\mathbf{1})$ with $\mathbf{1}=(1,1,\ldots,1)\in \mathbb{R}^m$ is positive definite and the load vector $f$ is in the range space of $K(\mathbf{1})$.
\end{Assumption}

\subsection{Truss topology problem, rank of the solution}

Let $0\leq\underline{t}_i \leq \overline{t}_i, \ i=1,\ldots,m$ and $\gamma$ be a positive constant. The basic truss topology problem reads as follows.
\begin{align}
    &\min_{t\in\RR^m,u\in\RR^n} \sum_{i=1}^m t_i\label{volume_b}\\
    &\mbox{subject to}\nonumber\\
    &\qquad K(t)u=f\nonumber\\
    &\qquad f^\top u \leq \gamma \nonumber\\
    &\qquad \underline{t}_i \leq t_i \leq \overline{t}_i, \quad i=1,\ldots,m\,.\nonumber
\end{align}

The following version of the Schur complement theorem can be found, e.g., in \cite{achtziger2008structural}.
\begin{lemma}\label{th:schur:SINGLE}
	Let $t\in\RR^m$, $t\geq 0$, and $\gamma\in\RR$ be fixed. Then there exists
	$u\in\RR^n$ satisfying
	$$
    	K(t) u = f \qquad \mbox{and} \qquad f^\top  u  \leq \gamma
	$$
	if and only if
	\[
    	\begin{pmatrix} \gamma &-f^\top \\ -f & K(t)\end{pmatrix}\succeq 0 \,.
	\]
\end{lemma}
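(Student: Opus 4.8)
The plan is to prove both implications directly from the definition of positive semidefiniteness, exploiting that $K(t)\succeq 0$ by construction. Writing $M:=\begin{pmatrix}\gamma&-f^\top\\-f&K(t)\end{pmatrix}$, the condition $M\succeq 0$ is equivalent to
\[
  q(s,v):=\gamma s^2-2s\,f^\top v+v^\top K(t)v\ \geq\ 0\qquad\text{for all }(s,v)\in\RR\times\RR^n.
\]
I would first record that $K(t)=\sum_{i=1}^m t_i\frac{E_i}{\ell_i^2}\gamma_i\gamma_i^\top\succeq 0$ since $t\geq 0$; this fact is used in both directions.

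For the ``only if'' direction, assume $u$ satisfies $K(t)u=f$ and $f^\top u\leq\gamma$. Substituting $f=K(t)u$ gives $f^\top v=u^\top K(t)v$ and $f^\top u=u^\top K(t)u$, so using $\gamma\geq u^\top K(t)u$ together with $s^2\geq 0$,
\[
  q(s,v)\ \geq\ s^2u^\top K(t)u-2s\,u^\top K(t)v+v^\top K(t)v\ =\ (su-v)^\top K(t)(su-v)\ \geq\ 0,
\]
the last step by $K(t)\succeq 0$. Hence $M\succeq 0$.

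For the ``if'' direction, assume $M\succeq 0$. Taking $s=0$ re-derives $K(t)\succeq 0$. To see that $K(t)u=f$ is solvable, i.e.\ that $f\in\operatorname{range}(K(t))=(\ker K(t))^{\perp}$, suppose to the contrary that some $w\in\ker K(t)$ has $f^\top w\neq 0$; evaluating $q$ at $s=1$, $v=\lambda w$ makes the $K(t)$-term vanish and gives $\gamma-2\lambda f^\top w\geq 0$ for every $\lambda\in\RR$, which is impossible. So choose $u$ with $K(t)u=f$; then $f^\top u=u^\top K(t)u\geq 0$. Finally, evaluating $q$ at $s=1$, $v=u$ yields $\gamma-2f^\top u+u^\top K(t)u=\gamma-f^\top u\geq 0$, i.e.\ $f^\top u\leq\gamma$, which completes the argument.

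This is essentially the generalized (Moore--Penrose) Schur complement specialized to a positive semidefinite Schur-complemented block, written out by hand. The one point to watch is that $K(t)$ is genuinely singular on the boundary of the feasible set and whenever some bar volumes vanish, so the textbook Schur complement with $K(t)^{-1}$ is not available; the small obstacle is the consistency condition $f\in\operatorname{range}(K(t))$, which the contradiction argument in the last paragraph settles cleanly. Beyond that, the computation is routine.
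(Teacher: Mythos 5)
Your proof is correct. Note, however, that the paper itself does not prove this lemma at all: it simply cites the literature (``can be found, e.g., in [achtziger2008structural]''), so there is no in-paper argument to compare against. Your self-contained derivation via the quadratic form $q(s,v)=\gamma s^2-2s\,f^\top v+v^\top K(t)v$ is exactly the right elementary route, and you correctly identify and handle the one nontrivial issue, namely that $K(t)$ may be singular so the classical Schur complement with $K(t)^{-1}$ does not apply; the range-consistency argument (if $w\in\ker K(t)$ with $f^\top w\neq 0$, then $\gamma-2\lambda f^\top w\geq 0$ for all $\lambda$ is violated) is the standard way to recover the generalized Schur complement condition, and the remaining computations $(su-v)^\top K(t)(su-v)\geq 0$ and $\gamma-2f^\top u+u^\top K(t)u=\gamma-f^\top u$ are both right. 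The only implicit ingredient worth flagging is that $K(t)\succeq 0$ relies on $E_i>0$ (positivity of the Young's moduli), which the paper assumes physically but never states; it is harmless here since the ``if'' direction re-derives $K(t)\succeq 0$ from $M\succeq 0$ anyway, and in the ``only if'' direction one may simply take $K(t)\succeq 0$ as part of the model.
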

From the above lemma, we immediately get an equivalent SDP formulation of the problem:
\begin{align}
    &\min_{t\in\RR^m} \sum_{i=1}^m t_i \label{v_sdp_b}\\
    &\mbox{subject to}\nonumber\\
    &\qquad   \begin{pmatrix}
        \gamma&-f^\top \\-f&K(t)
    \end{pmatrix} 
    \succcurlyeq 0 \nonumber\\
    &\qquad \underline{t}_i \leq t_i \leq \overline{t}_i, \quad i=1,\ldots,m \nonumber\,.
\end{align}

The dual to \cref{v_sdp_b} reads as follows.
\begin{align}
    &\max_{\stackrel{\scriptstyle X\in\S^{n+1}}{\overline{\rho}\in\RR^m,
    		\underline{\rho}\in\RR^m}} \begin{pmatrix}
    -\gamma&f^\top \\f&0
    \end{pmatrix}\bullet X - \sum_{i=1}^m \overline{\rho}_i \overline{t}_i
    + \sum_{i=1}^m \underline{\rho}_i \underline{t}_i \label{eq:sdp_dual:SINGLE}\\
    &\mbox{subject to}\nonumber\\
    &\qquad \begin{pmatrix}0&0\\0&K_i\end{pmatrix}\bullet X
    - \overline{\rho}_i +\underline{\rho}_i
    = 1, \quad i=1,\ldots,m \nonumber\\
    &\qquad X\succcurlyeq 0\nonumber\\
    &\qquad \overline{\rho}_i\geq 0,\ \underline{\rho}_i\geq 0,
    \quad i=1,\ldots,m \,.\nonumber
\end{align}

\begin{lemma}
  Problems \cref{v_sdp_b}, \cref{eq:sdp_dual:SINGLE} satisfy \cref{assum:a3} and \cref{assum:a4}.
\end{lemma}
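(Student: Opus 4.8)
The plan is to recognise the SDP pair \cref{v_sdp_b}--\cref{eq:sdp_dual:SINGLE} as an instance of the template \cref{eq:SDP}--\cref{eq:SDD} and then to read the two complexity claims directly off the (very sparse) data. Reading \cref{eq:sdp_dual:SINGLE} in the form \cref{eq:SDP}: the semidefinite variable $X$ lives in $\S^{n+1}$ (with $n$ the number of free displacement components, as in \Cref{sec:truss}), the nonnegative variable $(\overline{\rho},\underline{\rho})$ lives in $\RR^{2m}$, and there are $m$ scalar equality constraints. Hence the integer that plays the role of the ``number of constraints'' in \cref{assum:a3}--\cref{assum:a4} is the number of bars $m$, and each bound ``$\mathcal{O}(n)$'' appearing there is to be read as ``$\mathcal{O}(m)$'' in this application. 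The constraint matrices are $A_i=\begin{pmatrix}0&0\\0&K_i\end{pmatrix}\in\S^{n+1}$, $i=1,\ldots,m$, and the matrix coupling the linear variables to the constraints is, up to a harmless relabelling of $\overline{\rho}$ and $\underline{\rho}$, $D=\begin{pmatrix}I_m\\ -I_m\end{pmatrix}\in\RR^{2m\times m}$, so $D^\top=\begin{pmatrix}I_m & -I_m\end{pmatrix}$.

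For \cref{assum:a3} the key step is to exploit that, by \cref{ama:SINGLE}, $K_i=\frac{E_i}{\ell_i^2}\gamma_i\gamma_i^\top$ is rank one and that $\gamma_i$ has at most $2\cdot dim$ nonzero entries --- precisely the displacement components attached to the two endpoints of bar $i$. Since $dim\in\{2,3\}$ is a fixed constant, every $A_i$ has at most $(2\cdot dim)^2=\mathcal{O}(1)$ nonzero entries, all sitting in one fixed block of $\mathcal{O}(1)$ rows and columns. Consequently $\BA=[\text{vec}A_1,\dots,\text{vec}A_m]$ has $m$ columns, each carrying $\mathcal{O}(1)$ nonzeros, so in sparse storage it needs $\mathcal{O}(m)$ memory; the product $\BA v$ for $v\in\RR^m$ is a sum of $m$ scaled sparse columns, hence $\mathcal{O}(m)$ flops, while the $i$-th entry of $\BAT w$ is $A_i\bullet W=\frac{E_i}{\ell_i^2}\gamma_i^\top W\gamma_i$, which reads only $\mathcal{O}(1)$ entries of $W$ and costs $\mathcal{O}(1)$, i.e. $\mathcal{O}(m)$ overall. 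This is exactly the $\mathcal{O}(n)=\mathcal{O}(m)$ bound demanded by \cref{assum:a3}.

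For \cref{assum:a4} the computation is immediate: $D^\top D=I_m+I_m=2I_m$, so $(D^\top D)^{-1}=\frac{1}{2}I_m$, which is available in closed form with no storage beyond the scalar $\frac12$ and whose action on a vector is a single scaling, i.e. $\mathcal{O}(m)=\mathcal{O}(n)$ flops. Finally, since \cref{v_sdp_b} and \cref{eq:sdp_dual:SINGLE} form a primal--dual pair built from the same data $\{A_i\}$ and $D$ (entering via the adjoint operators), the two estimates above cover both problems. I do not expect a genuine mathematical obstacle here: the argument is short bookkeeping, and the only point needing care is keeping the two meanings of ``$n$'' apart --- the free-DOF count of \Cref{sec:truss} versus the constraint count of \cref{assum:a3}--\cref{assum:a4}, which in this application equals $m$.
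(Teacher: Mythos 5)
Your proof is correct and follows essentially the same route as the paper's: Assumption~\ref{assum:a3} from the fact that each $\gamma_i$ has at most $2\cdot dim$ nonzeros so each $K_i$ (and hence each $A_i$) has $\mathcal{O}(1)$ nonzeros, and Assumption~\ref{assum:a4} from the trivial invertibility of $D^\top D$ for box constraints. You simply spell out the bookkeeping (the $n$-versus-$m$ notational clash, the explicit $D^\top D=2I_m$) that the paper leaves implicit.
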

\begin{proof}
    Every vector $\gamma_i$ in \cref{ama:SINGLE} has at most 4 non-zero elements (6 in 3D space) and thus every matrix $K_i$ has at most 16 (36) non-zero elements, independently of the size of the problem; hence \cref{assum:a3} is satisfied. \cref{assum:a4} is trivially satisfied for box constraints on~$t_i$.
\end{proof}

\begin{proposition}{\rm\cite[Chap.\;2.5]{achtziger_diss}}
	\label{th:epsilon} Let $t^*\in\RR^m$ be a solution of the minimum
	volume problem \cref{volume_b} with $\underline{t}_i=0$,
	$i=1,\ldots,m$. There exists a sequence $\{t_k\}_{k=1}^\infty$,
	$t_k\in\RR^m$, with the following properties
	\begin{itemize}
		\item $t_k$ is a solution of \cref{volume_b} with
		$\underline{t}_i=\varepsilon_k$, $i=1,\ldots,m$\, and
		$\varepsilon_k\to 0$ as $k\to\infty$\,;
		\item $t_k\to t^*$ as  $k\to\infty$\,.
	\end{itemize}
\end{proposition}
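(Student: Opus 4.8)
\emph{Proof idea.} I would treat \cref{volume_b} as a family of convex programs $P(\varepsilon)$ parametrized by the common lower bound $\underline t_i\equiv\varepsilon\ge0$, and deduce the claim from a stability (lower semicontinuity) property of the solution set at $\varepsilon=0$. Using \cref{th:schur:SINGLE} to eliminate the displacement $u$, the feasible set of $P(\varepsilon)$ is
$$
   \mathcal F(\varepsilon)=\Big\{t\in\RR^m:\ \begin{pmatrix}\gamma&-f^\top\\-f&K(t)\end{pmatrix}\succeq0,\ \ \varepsilon\le t_i\le\overline t_i\ \ \forall i\Big\},
$$
a closed convex subset of the box $[\varepsilon\mathbf 1,\overline t]$, hence compact; thus $P(\varepsilon)$ is a linear semidefinite program, and its solution set $\mathcal S(\varepsilon)$ is nonempty, compact and convex. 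Write $v(\varepsilon)$ for the optimal value. Since $\mathcal F(\varepsilon')\subseteq\mathcal F(\varepsilon)$ for $\varepsilon'\ge\varepsilon\ge0$, the function $v$ is nondecreasing; in particular $v(\varepsilon)\ge v(0)=\sum_i t^*_i$.

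Next I would exhibit a feasible design of $P(\varepsilon)$ close to $t^*$. The pertinent monotonicity is that $t'\ge t$ (entrywise) implies $K(t')\succeq K(t)$, because $K(t')-K(t)=\sum_i(t'_i-t_i)K_i\succeq0$; hence the Schur-complement matrix inequality at $t$ persists at $t'$, so by \cref{th:schur:SINGLE} adding bar material never destroys feasibility of the compliance constraint. Put $t^{(\varepsilon)}:=\max\{t^*,\varepsilon\mathbf 1\}$ (entrywise). For $\varepsilon\le\min_i\overline t_i$ we have $t^{(\varepsilon)}\in[\varepsilon\mathbf 1,\overline t]$ and $t^{(\varepsilon)}\ge t^*$, so $t^{(\varepsilon)}\in\mathcal F(\varepsilon)$; moreover $\|t^{(\varepsilon)}-t^*\|\le\sqrt m\,\varepsilon$ and $\sum_i t^{(\varepsilon)}_i\le v(0)+m\varepsilon$. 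Together with $v(\varepsilon)\ge v(0)$ this gives $v(0)\le v(\varepsilon)\le v(0)+m\varepsilon$, hence $v(\varepsilon)\to v(0)$ as $\varepsilon\downarrow0$, and for any sequence $\varepsilon_k\downarrow0$ a minimizer $t_k\in\mathcal S(\varepsilon_k)$ exists by compactness of $\mathcal F(\varepsilon_k)$ and continuity of the linear objective (Weierstrass).

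The heart of the matter is the convergence $t_k\to t^*$ for a \emph{prescribed} solution $t^*$, since $\mathcal S(0)$ need not be a singleton. A bare compactness argument — each subsequence of $(t_k)$ has a further subsequence converging to some $\bar t$, and by closedness of the graph of $\mathcal F(\cdot)$ together with $v(\varepsilon_k)\to v(0)$ one gets $\bar t\in\mathcal S(0)$ — only delivers convergence to \emph{some} optimal design. To reach the given $t^*$ I would take $t_k$ to be the point of $\mathcal S(\varepsilon_k)$ nearest to $t^*$ and prove $\operatorname{dist}\!\big(t^*,\mathcal S(\varepsilon_k)\big)\to0$, i.e.\ lower semicontinuity of $\varepsilon\mapsto\mathcal S(\varepsilon)$ at $0$. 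Here $t^{(\varepsilon_k)}$ serves as a feasible point of $P(\varepsilon_k)$ that is $m\varepsilon_k$-suboptimal and within $\sqrt m\,\varepsilon_k$ of $t^*$; combined with an error bound $\operatorname{dist}\!\big(z,\mathcal S(\varepsilon)\big)\le\omega\!\big(\textstyle\sum_i z_i-v(\varepsilon)\big)$ for feasible $z$, with $\omega(\eta)\to0$ as $\eta\to0$ and holding uniformly for small $\varepsilon$, one obtains $\operatorname{dist}(t^*,\mathcal S(\varepsilon_k))\le\omega(m\varepsilon_k)+\sqrt m\,\varepsilon_k\to0$. Such an error bound is available because each $P(\varepsilon)$ is a linear SDP with a common Slater point: for small $\eta>0$ the design $t^S:=t^*+\eta\mathbf 1$ has all components positive and satisfies the \emph{strict} inequality, since
$$
   K(t^S)-\gamma^{-1}ff^\top=\big(K(t^*)-\gamma^{-1}ff^\top\big)+\eta K(\mathbf 1)\succeq\eta K(\mathbf 1)\succ0
$$
by \cref{assum:6} and $\gamma>0$ (using that \cref{th:schur:SINGLE} applied to the feasible $t^*$ yields $K(t^*)-\gamma^{-1}ff^\top\succeq0$); hence $t^S$ is strictly feasible for $P(\varepsilon)$ whenever $\varepsilon<\min_i t^S_i$ (if some upper bounds are active at $t^*$, one perturbs only the inactive bars, or uses the convention $\overline t_i=+\infty$). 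The main obstacle I anticipate is precisely establishing this error bound with a modulus that does not degenerate as $\varepsilon\downarrow0$; it follows from standard stability theory for convex parametric optimization under a Slater condition, and if one only needs the weaker conclusion that \emph{some} sequence of $P(\varepsilon_k)$-solutions converges to \emph{some} solution of $P(0)$, the subsequence argument above already suffices.
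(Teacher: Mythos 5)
First, note that the paper does not prove this proposition at all --- it is quoted verbatim from Achtziger's dissertation \cite[Chap.\;2.5]{achtziger_diss} --- so there is no in-paper argument to compare against; your attempt has to stand on its own.

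The first half of your argument is sound: compactness of $\mathcal F(\varepsilon)$, monotonicity $t'\ge t\Rightarrow K(t')\succeq K(t)$, the feasible point $t^{(\varepsilon)}=\max\{t^*,\varepsilon\mathbf 1\}$, the resulting bound $v(0)\le v(\varepsilon)\le v(0)+m\varepsilon$, and the Slater point are all correct. The genuine gap is exactly where you flag it and then wave it away: the uniform error bound $\operatorname{dist}(z,\mathcal S(\varepsilon))\le\omega\big(\sum_iz_i-v(\varepsilon)\big)$ with a modulus independent of $\varepsilon$ does \emph{not} ``follow from standard stability theory for convex parametric optimization under a Slater condition.'' Slater's condition yields continuity of the optimal value and \emph{outer} (upper) semicontinuity of the solution map; what you need is \emph{inner} (lower) semicontinuity of $\varepsilon\mapsto\mathcal S(\varepsilon)$ at the prescribed $t^*$, and that is false in general for convex programs with non-unique solutions --- a nearly optimal feasible point can sit far from the solution set, with the modulus degenerating as the problem is perturbed. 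So the citation you lean on does not close the argument, and the proof as written establishes only your weaker fallback conclusion (some sequence of $P(\varepsilon_k)$-solutions converges to some point of $\mathcal S(0)$), not convergence to the given $t^*$.

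It is worth seeing where the structure does and does not save you. If some solution $s^\circ\in\mathcal S(0)$ has all components strictly positive, your segment idea already finishes the proof without any error bound: $t_\lambda=(1-\lambda)t^*+\lambda s^\circ$ lies in the convex set $\mathcal S(0)$, has all components at least $\lambda\min_i s^\circ_i$, hence belongs to $\mathcal F(\varepsilon)$ for $\varepsilon\le\lambda\min_i s^\circ_i$; since its objective value equals $v(0)\le v(\varepsilon)$, it is in fact a \emph{solution} of $P(\varepsilon)$, and $t_\lambda\to t^*$ as $\lambda\downarrow0$. The hard case is when every optimal design has some vanishing bars --- which is precisely the degenerate situation the paper cares about (it is when $\rank X^*>1$ becomes possible) --- and there your argument provides no mechanism forcing $\mathcal S(\varepsilon_k)$ to approach the particular $t^*$ rather than some other face of $\mathcal S(0)$. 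To repair the proof you would need either a genuinely problem-specific argument exploiting the monotone/affine structure of $K(\cdot)$ in that degenerate case, or you should simply retreat to the weaker statement, which (since any dual optimal $X$ is complementary to \emph{every} primal optimal slack) is in fact all that the proof of \cref{th:raknone} actually requires.
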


\begin{theorem}\label{th:raknone}
	There exists a solution $(X^*,\overline{\rho}^*,\underline{\rho}^*)
	\in \S^{n+1}\times\RR^m\times\RR^m$  of the dual SDP problem
	\cref{eq:sdp_dual:SINGLE} such that the rank of $X^*$ is one.
\end{theorem}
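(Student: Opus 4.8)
The proof rests on the mechanical meaning of the dual variable: a rank-one dual optimal matrix is, up to a positive scalar, the outer product $\begin{pmatrix}1\\ u\end{pmatrix}\begin{pmatrix}1 & u^\top\end{pmatrix}$ formed from the displacement $u$ at a primal optimum. Throughout I assume $f\neq0$ (the case $f=0$ is trivial). I would first recast \cref{v_sdp_b} as a convex program in $t$ alone: with the \emph{compliance} $c(t):=\sup_{u\in\RR^n}\bigl(2f^\top u-u^\top K(t)u\bigr)$, a supremum of functions affine in $t$ and therefore convex, \cref{th:schur:SINGLE} identifies the matrix inequality in \cref{v_sdp_b} with $c(t)\le\gamma$, so \cref{v_sdp_b} reads $\min\{\mathbf{1}^\top t:\ c(t)\le\gamma,\ \underline t\le t\le\overline t\}$. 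I would run the argument first for $\underline t$ with strictly positive components — in particular for the regularised problems of \cref{th:epsilon} with $\underline t_i=\varepsilon_k>0$ — where, by \cref{assum:6}, $K(t)\succeq K(\underline t)\succ0$ for every feasible $t$, so $c(t)=f^\top K(t)^{-1}f$ is differentiable with $\nabla_i c(t)=-u^\top K_iu$ for $u=K(t)^{-1}f$. Under a constraint qualification (Slater for \cref{v_sdp_b}) a minimiser $t^*$ exists, with unique displacement $u^*=K(t^*)^{-1}f$, and there are Karush--Kuhn--Tucker multipliers $\lambda\ge0$ and $\overline\rho,\underline\rho\ge0$ satisfying
\[
    1-\lambda\,u^{*\top}K_iu^*+\overline\rho_i-\underline\rho_i=0,\qquad i=1,\dots,m,
\]
together with $\lambda\bigl(c(t^*)-\gamma\bigr)=0$, $\overline\rho_i\bigl(t_i^*-\overline t_i\bigr)=0$ and $\underline\rho_i\bigl(t_i^*-\underline t_i\bigr)=0$.

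I would then take $X^*:=\lambda\begin{pmatrix}1\\ u^*\end{pmatrix}\begin{pmatrix}1 & u^{*\top}\end{pmatrix}$, $\overline\rho^*:=\overline\rho$, $\underline\rho^*:=\underline\rho$ and verify that it solves \cref{eq:sdp_dual:SINGLE}. Dual feasibility is immediate: $X^*\succeq0$, $\overline\rho^*,\underline\rho^*\ge0$, and $\begin{pmatrix}0&0\\0&K_i\end{pmatrix}\bullet X^*=\lambda\,u^{*\top}K_iu^*$, so the equality constraint of \cref{eq:sdp_dual:SINGLE} is exactly the stationarity identity. For optimality I would evaluate the dual objective: using $\begin{pmatrix}-\gamma&f^\top\\f&0\end{pmatrix}\bullet X^*=\lambda(-\gamma+2f^\top u^*)$ and $f^\top u^*=c(t^*)$, and then the box complementary slackness (to write $\overline\rho_i\overline t_i=\overline\rho_it_i^*$ and $\underline\rho_i\underline t_i=\underline\rho_it_i^*$), the stationarity identity (to express $\underline\rho_i-\overline\rho_i$) and $K(t^*)u^*=f$, the dual objective telescopes to $\mathbf{1}^\top t^*+\lambda\bigl(\gamma-c(t^*)\bigr)=\mathbf{1}^\top t^*$, the primal optimal value. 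By weak duality, $(X^*,\overline\rho^*,\underline\rho^*)$ is then dual optimal.

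By construction $\rank X^*\le1$, with equality exactly when $\lambda>0$. If $\lambda=0$, the stationarity identity forces $\underline\rho_i=1+\overline\rho_i\ge1$, so $t^*=\underline t$ by complementary slackness; hence $\lambda>0$ whenever $\underline t$ is infeasible for \cref{volume_b}, which is the case for the regularised problems once $k$ is large since $c(\varepsilon_k\mathbf{1})=\varepsilon_k^{-1}f^\top K(\mathbf{1})^{-1}f\to\infty$. For the original problem with $\underline t_i=0$ I would let $\varepsilon_k\to0$: by \cref{th:epsilon} the primal minimisers $t_k$ converge to a minimiser $t^*$ of \cref{volume_b}, the rank-one dual optima $X_k^*=\lambda_k\begin{pmatrix}1\\ u_k\end{pmatrix}\begin{pmatrix}1 & u_k^\top\end{pmatrix}$ are bounded, and any accumulation point is positive semidefinite of rank at most one (that class is closed); a stability argument identifies it as a dual optimum of the limit problem, and $\lambda^*>0$ (otherwise $t^*=\underline t=0$ and the equilibrium $K(0)u^*=f$ would force $f=0$), so the limit has rank exactly one.

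The step I expect to be hardest is this passage to the limit: proving boundedness of the dual optima $(\lambda_k,u_k)$ as $\varepsilon_k\to0$, showing the limit remains dual optimal for the $\underline t=0$ problem (the dual equality constraint does not depend on $\underline t$, and the dual objective converges uniformly on bounded sets, so this should follow from a standard value-stability argument), and ruling out a rank drop to zero — which is precisely where $f\neq0$, i.e. an active compliance constraint at the optimum, is used. I should also note that in fully degenerate instances, where $\underline t$ is already feasible for \cref{volume_b} and the compliance constraint is slack there, the only dual optimum of rank at most one is $X^*=0$; so the statement is to be read as ``$\rank X^*\le1$, with equality whenever the compliance constraint is active at the solution.''
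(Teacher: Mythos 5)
Your proposal is correct in substance but takes a genuinely different route from the paper. The paper's proof never leaves the SDP world: it takes the primal slack matrix $S^*=\begin{pmatrix}\gamma&-f^\top\\-f&K(t^*)\end{pmatrix}$, invokes complementarity and simultaneous diagonalizability to get the rank condition $\rank(S^*)+\rank(X)\le n+1$, and observes that for $\underline t>0$ \cref{assum:6} makes $K(t^*)$ nonsingular, so $\rank(S^*)\ge n$ and hence $\rank(X)\le 1$; the case $\underline t=0$ is then handled by exactly the limiting argument via \cref{th:epsilon} that you also use. You instead reduce \cref{v_sdp_b} to a smooth program in $t$ via the compliance function and \emph{construct} the rank-one dual certificate $X^*=\lambda\begin{pmatrix}1\\u^*\end{pmatrix}\begin{pmatrix}1&u^{*\top}\end{pmatrix}$ from the KKT multipliers, verifying optimality by a telescoping computation against weak duality. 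Your route buys more: an explicit mechanical formula for the dual solution and a clean criterion ($\lambda>0$, i.e.\ an active compliance constraint) for when the rank is exactly one rather than zero -- a degeneracy the paper dismisses with the phrase ``we exclude trivial solutions with $X\equiv 0$'' and which you correctly isolate. The paper's route is shorter and needs no constraint qualification or differentiability of $t\mapsto f^\top K(t)^{-1}f$, only the rank inequality for complementary pairs. Both proofs share the same unresolved technical point in the $\underline t=0$ case: neither establishes boundedness of the dual sequence nor convergence $x_k\to x^*$ before passing to the limit (the paper simply writes ``Then $X^*=x^*(x^*)^\top$ is a solution''), so your honest flagging of that step as the hardest is not a defect relative to the published argument.
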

\begin{proof} Let $t^*$ be a solution of the primal SDP problem
	\cref{v_sdp_b} associated with \cref{eq:sdp_dual:SINGLE}. Denote by
	$S^*\in\S^{n+1}$ the primal slack matrix variable
	$$
    	S^* = \begin{pmatrix}
    	\gamma&-f^\top \\-f&K(t^*)
    	\end{pmatrix} \,.
	$$
	This matrix is complementary to any
	solution $X$ of the dual SDP problem, i.e.,
	$$
	    \langle X,S^* \rangle = 0 \,.
	$$
	As $X$ and $S^*$ are commuting and thus simultaneously
	diagonalizable, we have the following rank
	condition (see, e.g., \cite{deklerk}):
	\begin{equation}\label{eq:rank:SINGLE}
	    \rank(S^*) + \rank(X) \leq n+1 \,.
	\end{equation}
	
	If $\underline{t}_i>0$, $i=1,\ldots,m$, the matrix $K(t^*)$ is
	positive definite and, due to \Cref{assum:6}, has a full rank $n$.
	Hence, by the above rank condition, the rank of any dual solution
	$X$ is at most one. Since we exclude trivial solutions with $X\equiv
	0$, the rank of $X$ is exactly equal to one.
	
	Assume now that $\underline{t}_i=0$, $i=1,\ldots,m$, i.e., the
	matrix $K(t^*)$ can be rank deficient. Due to
	\cref{th:epsilon}, there is a sequence of solutions
	$\{S_k\}$ to the problem \cref{v_sdp_b} with
	$(\underline{t}_i)_k=\varepsilon_k$, $\varepsilon_k\to 0$ as $k\to
	\infty$, such that $S_k\to S^*$. Associated with $\{S_k\}$ there is
	a sequence of dual solutions $\{X_k\}$, such that any pair
	$(S_k,X_k)$ satisfies the complementarity condition and thus the
	rank condition. By this construction, the matrices $K(t_k)$ are
	strictly positive definite and thus, due to \Cref{assum:6}, have full
	rank $n$. From the rank condition \cref{eq:rank:SINGLE} it follows
	that the rank of $X_k$ is at most one and thus exactly equal to one.
	Hence there is a sequence of vectors $\{x_k\}$ such that $X_k=
	x_kx_k^\top $ for each $k$. Then $X^*=x^* (x^*)^\top $ is a
	solution of the dual SDP problem and thus complementary to $S^*$. By
	construction, $X^*$ is a rank-one matrix.
\end{proof}


\begin{corollary}\label{th:raknone_cor}
	Let $\underline{t}_i>0$ for $i=1,\ldots,m$. Then the solution $X^* \in \S^{n+1}$  of the dual SDP problem \cref{eq:sdp_dual:SINGLE} has rank one.
\end{corollary}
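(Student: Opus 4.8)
\emph{Proof idea.} The claim is exactly the non-degenerate case that is already isolated inside the proof of \cref{th:raknone}, so the plan is to extract and slightly streamline that argument, this time without needing the limiting construction of \cref{th:epsilon}. First I would fix a solution $t^*$ of the primal SDP \cref{v_sdp_b}; since $\underline{t}_i>0$, primal feasibility forces $t_i^*\geq\underline{t}_i>0$ for every $i$. Writing $K(t^*)=\sum_{i=1}^m t_i^* K_i$ with each $K_i\succeq 0$ as in \cref{ama:SINGLE} and invoking \cref{assum:6}, which gives $K(\mathbf{1})=\sum_i K_i\succ 0$, I obtain $K(t^*)\succeq(\min_i t_i^*)\,K(\mathbf{1})\succ 0$, hence $\rank K(t^*)=n$.

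Next I would form the primal slack matrix
$$
    S^* = \begin{pmatrix}\gamma&-f^\top\\-f&K(t^*)\end{pmatrix}\succeq 0,
$$
where positive semidefiniteness is \cref{th:schur:SINGLE} applied at $t^*$. Because the trailing $n\times n$ block $K(t^*)$ is nonsingular, the last $n$ columns of $S^*$ are linearly independent, so $\rank S^*\geq n$. Any dual optimal $X^*$ is complementary to $S^*$, i.e. $\langle X^*,S^*\rangle=0$; since $X^*$ and $S^*$ are both positive semidefinite, $\operatorname{tr}(X^*S^*)=0$ forces $X^*S^*=0$, so they commute and are simultaneously diagonalizable, which yields the rank relation $\rank S^*+\rank X^*\leq n+1$ — the same inequality \cref{eq:rank:SINGLE} used in \cref{th:raknone}, cf. \cite{deklerk}. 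Combining the two bounds gives $\rank X^*\leq 1$, and discarding the trivial solution $X^*\equiv 0$, as is done in \cref{th:raknone}, leaves $\rank X^*=1$.

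The argument is essentially routine once \cref{th:raknone} is available, so I do not expect a real obstacle. The only points needing a word of care are the implication "$K(t^*)$ nonsingular $\Rightarrow\rank S^*\geq n$", which is immediate linear algebra on the last $n$ columns of $S^*$, and the rank identity for complementary positive semidefinite matrices, which is standard. If one wished to read "the solution" as a genuine uniqueness statement rather than as "any nontrivial solution", one would additionally invoke strict complementarity; but for the use made of this corollary the rank-one property of every nontrivial dual solution is all that is required.
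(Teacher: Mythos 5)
Your proposal is correct and follows essentially the same route as the paper: the corollary is exactly the non-degenerate case handled in the first half of the proof of \cref{th:raknone}, namely that $\underline{t}_i>0$ forces $K(t^*)\succ 0$, hence $\rank S^*\geq n$, and the complementarity rank condition \cref{eq:rank:SINGLE} then gives $\rank X^*\leq 1$, with the trivial solution excluded. Your extra details (the bound $K(t^*)\succeq(\min_i t_i^*)K(\mathbf{1})$ and the column argument for $\rank S^*\geq n$) merely make explicit steps the paper leaves implicit.
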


\paragraph{Problems with approximate low rank} The conclusion of \cref{th:raknone_cor} changes when we assume that $\underline{t}_i=0$ for $i=1,\ldots,m$. In this case, some bars may ``vanish" at the optimum (the corresponding $t_i=0$). Moreover, some nodes of the structure may only be connected by these vanishing bars and hence also ``vanish" from the optimal structure. As a consequence, the stiffness matrix $K(t)$ will become singular with as many zero eigenvalues as the vanished nodes (not connected to the optimal structure). Then
$$
  \rank(S^*)< n \quad\mbox{and thus}\quad \rank(X^*)>1\quad\mbox{is possible}\,.
$$
Hence, while there is always a rank-one solution by \cref{th:raknone}, in this case there might be other solutions with higher rank. In particular, when we solve the problems by an interior point method, the pair of solutions $(S^*,X^*)$ will be maximally complementary (\cite[Thm.\,3.4]{deklerk}), so the rank of $X^*$ will be equal to the number of the vanished nodes plus one. 

\begin{Example}
\Cref{fig:tr1} shows an example of a truss with potentially six nodes (the remaining three nodes are fixed by boundary conditions); at the optimum, nodes 1, 2, 3, 4, 6 vanished---they are not connected to the optimal structure but are still present in the problem and its stiffness matrix. Hence the solution of the dual problem may have rank of up to $(1+2\times 5)=11$.
	\begin{figure}[tbhp]
	\begin{center}
		\resizebox{0.5\hsize}{!}
		{\includegraphics{{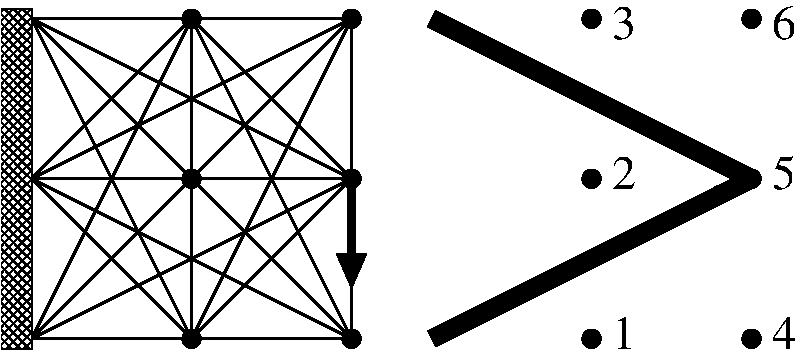}}}
	\end{center}
	\caption{A problem with five nodes vanishing at the optimum; initial structure (left) and optimal solution (right).}
	\label{fig:tr1}
	\end{figure}
The associated \cref{tab:ex1} presents, column-wise, eigenvalues of $X^*$ and $S^*$ for $\underline{t}>0$ (when rank-one solution $X^*$ is expected) and eigenvalues of $X^*$ and $S^*$ for $\underline{t}=0$ (when one outlying eigenvalue of $X^*$ is expected).

\noindent
\begin{table}[tbhp]
\begin{center}
\caption{Eigenvalues of $X^*$ and $S^*$ for problem in \cref{fig:tr1}.}
\label{tab:ex1}
\begin{minipage}[t]{0.22\hsize}
{\scriptsize
\centering{$\lambda(X^*),\ \underline{t}>0$}
\begin{verbatim}
1.980482521248879
0.000000102260913
0.000000086085241
0.000000068216515
0.000000054110869
0.000000042818584
0.000000039334355
0.000000033270286
0.000000032811718
0.000000028005339
0.000000027431494
0.000000000001029
0.000000000000041
\end{verbatim}}
\end{minipage}\quad
\begin{minipage}[t]{0.22\hsize}
{\scriptsize
\centering{$\lambda(S^*),\ \underline{t}>0$}
\begin{verbatim}
0.000000000000102
0.000003116874928
0.000003956920433
0.000006409263345
0.000007529457678
0.000009477992158
0.000011708852890
0.000012193659241
0.000014115281046
0.000014761068984
0.000016565957597
0.399999306252456
10.10000007997102
\end{verbatim}}
\end{minipage}\quad
\begin{minipage}[t]{0.22\hsize}
{\scriptsize
\centering{$\lambda(X^*),\ \underline{t}=0$}
\begin{verbatim}
2.064483254260574
0.010238711360358
0.009214785863400
0.005367091514209
0.005013654697043
0.003545367975067
0.002923233469802
0.002610639717586
0.001846306250799
0.001771660792931
0.001555022631457
0.000000000000031
0.000000000000001
\end{verbatim}}
\end{minipage}\quad
\begin{minipage}[t]{0.22\hsize}
{\scriptsize
\centering{$\lambda(S^*),\ \underline{t}=0$}
\begin{verbatim}
0.000000000010001
0.000000000013521
0.000000000013687
0.000000000016235
0.000000000017372
0.000000000018683
0.000000000020485
0.000000000021874
0.000000000024806
0.000000000028448
0.000000000032822
0.400000000006957
10.10000000001012
\end{verbatim}}
\end{minipage}
\end{center}
\end{table}
\end{Example}

\medskip
Notice that the number of the vanished nodes depends on the initial setting of the problem and can be rather high, as compared to the total number of the nodes. 

\subsection{Truss topology problem, vibration constraints}
Formulation of the basic truss topology problem \cref{volume_b} as an SDP is rather academic. It was shown, e.g., in \cite{jarre1998optimal} that a dual to \cref{volume_b} is a convex nonlinear programming problem that can be solved very efficiently by interior point methods. The SDP formulation gains significance once we add more, important and practical, constraints to the problem. In particular, it was shown in \cite{achtziger2007maximization,achtziger2008structural} that a constraint on natural (free) vibrations of the optimal structure leads to a linear matrix inequality and that this is, arguably, the best way how to treat this constraint.

The free vibrations are the squares of the eigenvalues of the following
generalized eigenvalue problem
\begin{equation}\label{eq:vibEVP}
    K(t) w = \lambda \left(M(t)+M_0\right) w \,.
\end{equation}
Here
$M(t)=\sum\limits_{i=1}^{m} t_i M_i$
is the so-called mass matrix that collects information about the mass
distribution in the truss. The matrices $M_i$ are positive semidefinite
and have the same sparsity structure as $K_i$. The non-structural mass
matrix $M_0$ is a constant, typically diagonal matrix with very few
nonzero elements.

Low vibrations are dangerous and may lead to structural collapse. Hence
we typically require the smallest free vibration to be bigger than some
threshold, that is:
$$
    \lambda_{\min} \geq \overline{\lambda}\quad\mbox{for a given}\ \overline{\lambda}>0\,\
$$
where $\lambda_{\min}$ is the smallest eigenvalue of the generalized eigenvalue problem \cref{eq:vibEVP}.
This constraint can be  equivalently written as a linear matrix
inequality
\begin{equation} \label{eq:truss_vibra}
    K(t)-\overline{\lambda}\left(M(t)+M_0\right)\succcurlyeq 0
\end{equation}
which is to be added to the basic truss topology problem.
As \cref{eq:truss_vibra} is a linear matrix inequality in variable
$t$, it is natural to add this constraint to the primal SDP formulation
\cref{v_sdp_b}. We will thus get the following linear SDP formulation
of the truss topology design with a vibration constraint:
\begin{align}
    &\min_{t\in\RR^m} \sum_{i=1}^m t_i \label{eq:vib}\\
    &\mbox{subject to}\nonumber\\
    &\qquad   \begin{pmatrix}
    \gamma&-f^\top \\-f&K(t)
    \end{pmatrix} \succcurlyeq 0 \nonumber\\
    &\qquad K(t)-\overline{\lambda}\left(M(t)+M_0\right)\succcurlyeq 0\nonumber\\
    &\qquad \underline{t}_i \leq t_i \leq \overline{t}_i, \quad i=1,\ldots,m \nonumber\,.
\end{align}

It has been shown, e.g., in \cite{stingl2009sequential} that, when $\underline{t}_i>0$, the rank of the optimal dual variable to the second matrix inequality is equal to the multiplicity of the smallest eigenvalue of \cref{eq:vibEVP}. This multiplicity depends on the geometry of the optimal structure but is, typically very low, usually not bigger than 1 or 2. Thus the new problem \cref{eq:truss_vibra} fits in our framework of SDPs with very low rank dual solutions.
\section{Numerical experiments}\label{sec:examples}
\subsection{Problem database}
To test our algorithms, we generated a library of truss topology problems of various sizes, both with and without the vibration constraint. All problems have the same geometry, boundary conditions and loads and only differ in the number of potential nodes and bars. 

The geometry and loading for TTO without vibration (formulation~\cref{v_sdp_b}) is as in \cref{fig:tr1}-left, while the data for problems with the vibration constraint (formulation~\cref{eq:vib}) differ only in the horizontal orientation of the load vector. We have generated four groups of problems with the following naming convention. In each group, all nodes in the initial ground structure are connected by potential bars.
\begin{verse}
    \item[\bf tru<n>] standard TTO problem~\cref{v_sdp_b} with $\underline{t}=0$, discretized by $n\times n$ nodes;
    \item[\bf tru<n>e] standard TTO problem~\cref{v_sdp_b} with $\underline{t}=\varepsilon>0$, discretized by $n\times n$ nodes;
    \item[\bf vib<n>]  TTO problem with the vibration constraint \cref{eq:vib} with $\underline{t}=0$, discretized by $n\times n$ nodes;
    \item[\bf vib<n>e]  TTO problem with the vibration constraint~\cref{eq:vib} with $\underline{t}=\varepsilon>0$, discretized by $n\times n$ nodes.
\end{verse}
\Cref{tab:sizes_tru} present dimensions of the generated problems. Notice that these problems satisfy \cref{assum:a5}. 
\begin{table}[tbhp]
{\footnotesize\caption{Problems tru<n> and tru<n>e and problems vib<n> and vib<n>e, number of variables $n$, size of the LMI constraint $m$ and number of linear constraints.}
    \label{tab:sizes_tru}
    \begin{minipage}[t]{0.48\hsize}
    \centering
    \begin{tabular}{cccc}\toprule
	problem & $n$ & $m$ & lin.\ constr.\\\midrule
tru3(e)	&36	    &13&72\\
tru5(e)	&300	&41&600\\
tru7(e)	&1176	&85&2352\\
tru9(e)	&3240	&145&6480\\
tru11(e)	&7260	&221&14520\\
tru13(e)	&14196	&313&28392\\
tru15(e)	&25200	&421&50400\\
tru17(e)	&41616	&545&83232\\
tru19(e)	&64980	&685&129960\\
tru21(e)	&97020	&841&194040\\
tru23(e)	&139656	&1013&279312\\
tru25(e)	&195000	&1201&390000\\\bottomrule
    \end{tabular}
    \end{minipage}
    \begin{minipage}[t]{0.48\hsize}
    \centering
    \begin{tabular}{cccc}\toprule
	problem & $n$ & $m$ & lin.\ constr.\\\midrule
vib3(e)	    &36	    &(13,\,12)&72\\
vib5(e)	    &300	&(41,\,40)&600\\
vib7(e)	    &1176	&(85,\,84)&2352\\
vib9(e)	    &3240	&(145,\,144)&6480\\
vib11(e)	&7260	&(221,\,220)&14520\\
vib13(e)	&14196	&(313,\,312)&28392\\
vib15(e)	&25200	&(421,\,420)&50400\\
vib17(e)	&41616	&(545,\,544)&83232\\
vib19(e)	&64980	&(685,\,684)&129960\\
vib21(e)	&97020	&(841,\,840)&194040\\
vib23(e)	&139656	&(1013,\,1012)&279312\\
vib25(e)	&195000	&(1201,\,1200)&390000\\\bottomrule
    \end{tabular}
    \end{minipage}
    }
\end{table}

For illustration, \cref{fig:tr_opt} shows optimal results for problems tru25 and vib19.
	\begin{figure}[tbhp]
	\begin{center}
		\resizebox{0.35\hsize}{!}
		{\includegraphics{{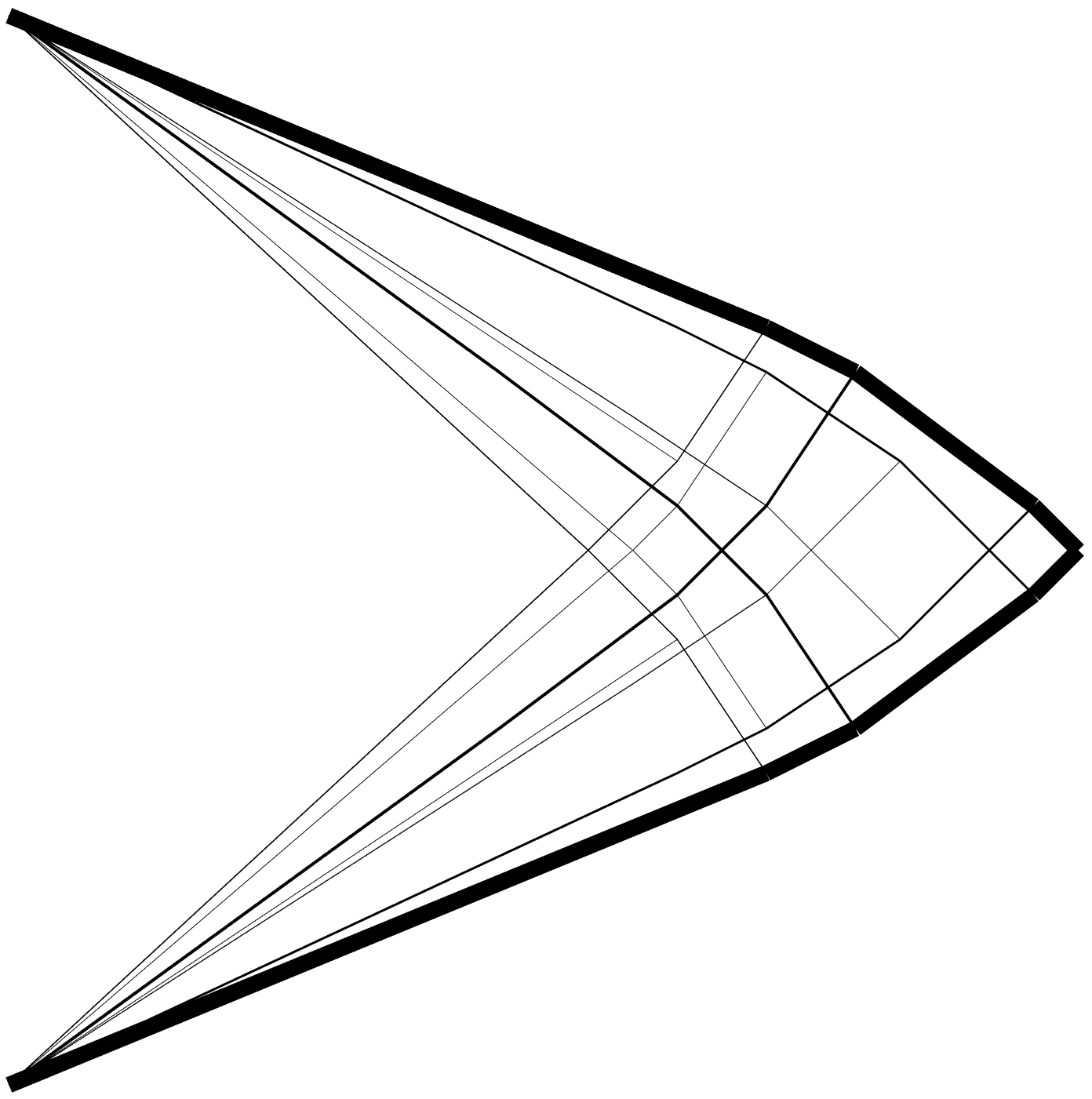}}}\qquad
				\resizebox{0.35\hsize}{!}
		{\includegraphics{{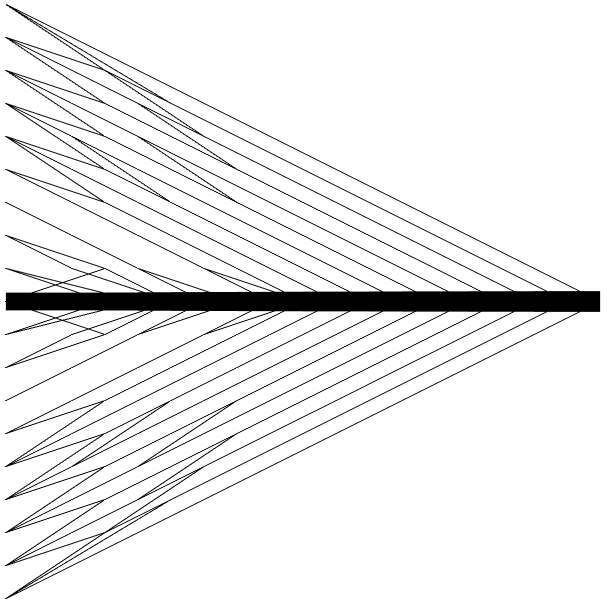}}}
	\end{center}
	\caption{Optimal results for problems tru25 (left) and vib19 (right).}
	\label{fig:tr_opt}
	\end{figure}

\subsection{Numerical results}
In this section we will present numerical results for the truss topology problems from the problem database. The two algorithms were implemented in MATLAB codes PDAL and Loraine (Low-rank interior-point). The codes use the same modules for data input and for the solution of the linear systems. 
Both codes use DIMACS stopping criteria \cite{dimacs} with $\varepsilon_{\scriptscriptstyle \rm DIMACS}=10^{-5}$.

In \Cref{tab:PDAL} we present the value of parameters chosen in PDAL (notice that they differ for the tru and vib collections). Moreover for the block constraints in the tru problems we are using the Penalty-Barrier function
with $\tau=0.5$ while for the vib problems we use a pure barrier function. Loraine parameters have already been specified in \cref{algo:IP}.

The stopping parameter for the CG method $\varepsilon_{\scriptscriptstyle \rm CG}$ in \cref{algo:PCG} has been set and updated as follows: we start with $\varepsilon_{\scriptscriptstyle \rm CG} = 0.01$ and multiply it after every major iteration by 0.5, until it reaches the value $10^{-6}$; then we continue with this value till convergence of the optimization algorithm.

\begin{table}[htbp]
    \centering
\label{tab:PDAL}%
  {\footnotesize\caption{Parameters used in PDAL for the tru and vib problems}
    \begin{tabular}{ccc|ccc}
    \toprule
 param & tru & vib & param & tru & vib \\  
 \midrule
 $\pi^{\rm{lin}}_{\rm{min}}$ & $10^{-9}$ & $10^{-11}$&  
  $\gamma^{\rm{lin}}$ & 0.5 & 1 \\[1ex]
 $\pi^{\rm{lmi}}_{\rm{min}}$ & $10^{-5}$ & $10^{-5}$ &  
 $\gamma^{\rm{lmi}}$ & 0.5 & 0.8 \\ [1ex]
 $\pi^{\rm{lin}}_{\rm{upd}}$ & 0.5 & 0.3&
 $r$ & 0.01 & 0.01  \\ [1ex]
  $\pi^{\rm{lmi}}_{\rm{upd}}$ & 0.5 & 0.3&
 $\varepsilon$ & $10^{-6}$ &  $10^{-6}$\\[1ex]
    \bottomrule
    \end{tabular}%
  }
\end{table}%
%

Unless stated otherwise, we use the following preconditioners:
\begin{description}
    \item[PDAL:] The preconditioner $H_\gamma$ described in \Cref{sec:prec_AL} is used throughout all iterations.
    \item[Loraine:] A hybrid preconditioner called $H_{\rm hyb}$: we start with $H_\beta$ \cref{eq:Hbeta} until the criterion below is satisfied and then switch to $H_\alpha$ \cref{Halpha}:
    $$
      N_{\rm cg\_iter}>kp\sqrt{n}/10\ \  \&\ \  N_{\rm iter}>\sqrt{n}/60\,.
    $$
    Here $N_{\rm iter}$ is the current IP iteration and $N_{\rm cg\_iter}$ the number of CG steps needed to solve the corrector equation in this iteration. This criterion is, obviously, purely heuristic. We further choose the value of $\tau$ in $H_\alpha$ and $H_\beta$ as 
    $$
    \tau = \lambda_1(W_i)+0.5\,{\rm mean}(\lambda_1(W_i),\ldots,\lambda_{m-k_i}(W_i))\,.
    $$
\end{description}

All problems were solved on an iMac with 3.6 GHz 8-Core Intel Core i9 and 40 GB 2667 MHz DDR4 using MATLAB R2020a.

\smallskip
Before presenting the results, we want to illustrate the efficiency of the used preconditioners, in particular, in the light of the assumption of \cref{th:precond1}. 
	\begin{figure}[tbhp]
	\begin{center}
		\resizebox{0.45\hsize}{!}
		{\includegraphics{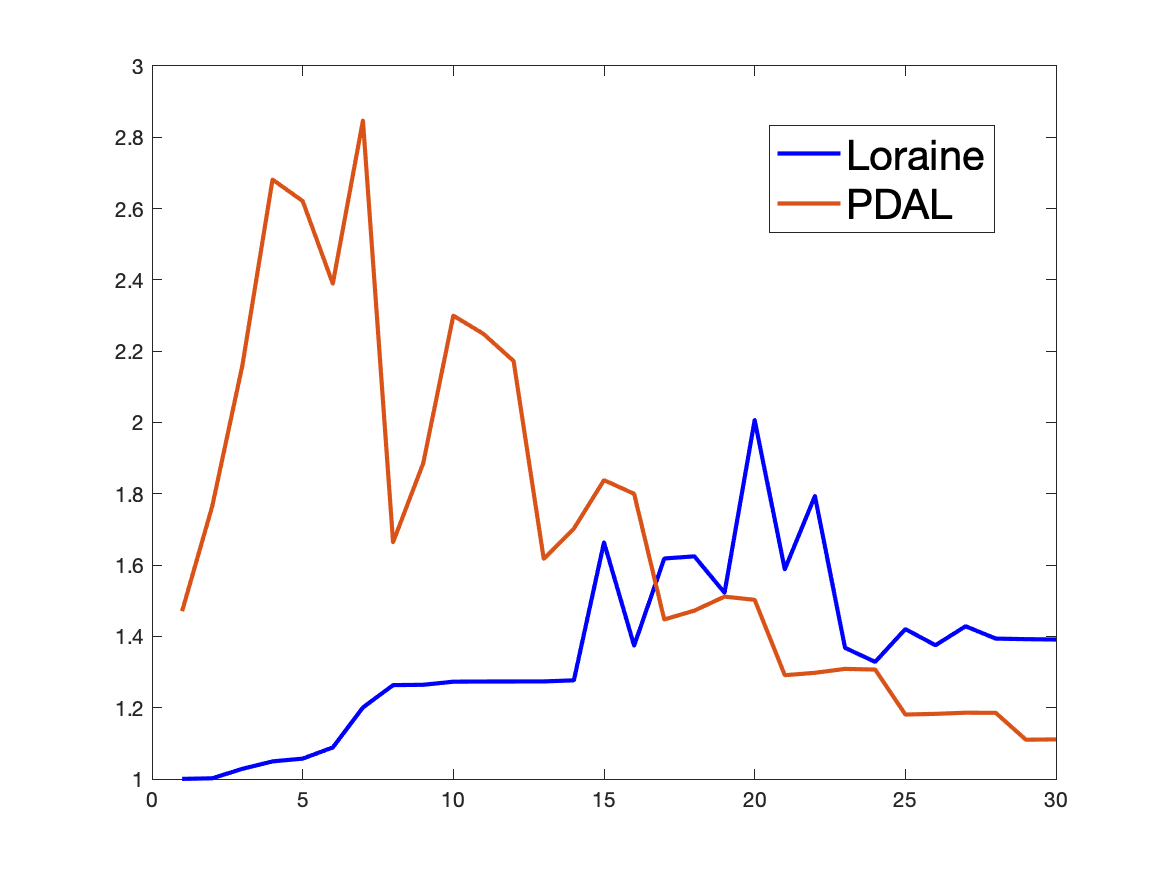}}
				\resizebox{0.45\hsize}{!}
		{\includegraphics{{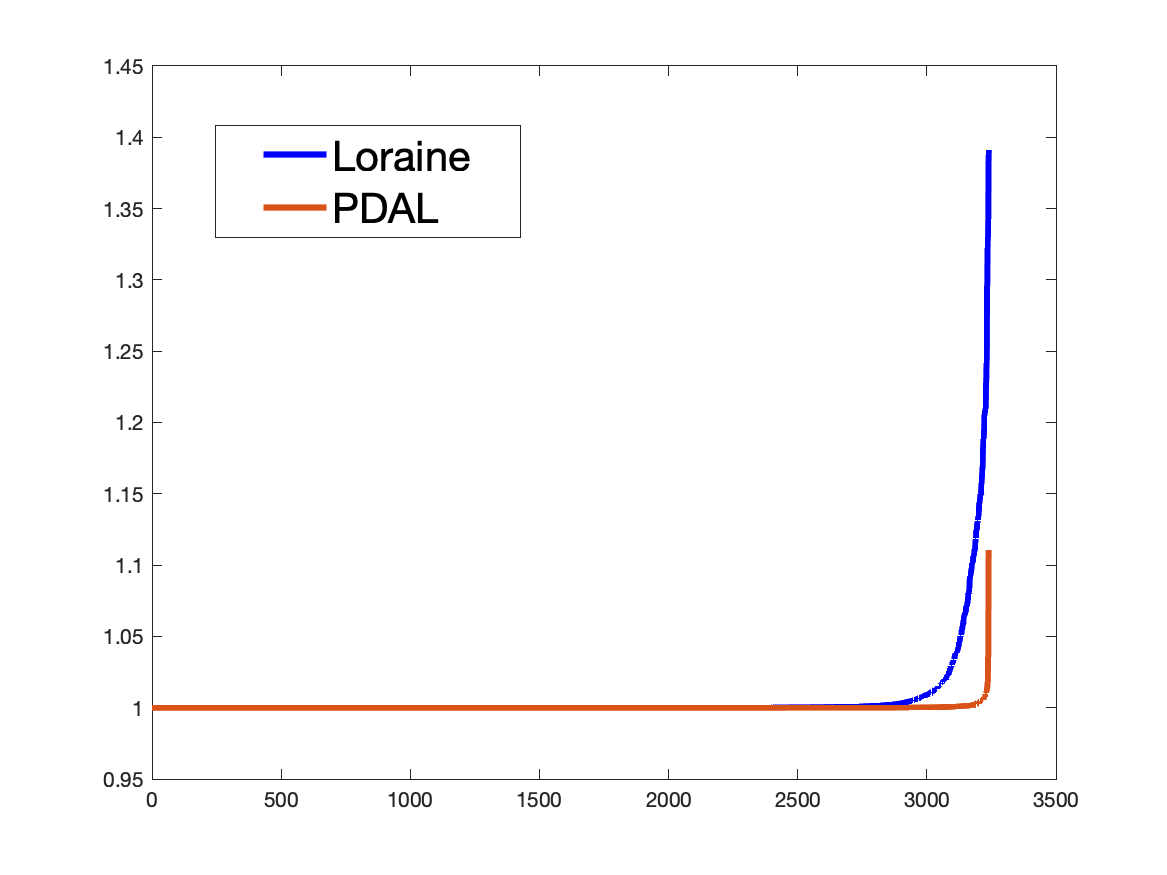}}}
	\end{center}
	\caption{Left: The history of the last 30 condition numbers of the preconditioned matrix in Loraine (blue) and PDAL (red) in tru9 problem. Right: Eigenvalues of the preconditioned matrix in the last iteration of Loraine (blue) and PDAL (red).}
	\label{fig:tr_IP_PDAL_cond}
	\end{figure}
In \cref{fig:tr_IP_PDAL_cond}--left we present the history of the condition numbers of the preconditioned matrices for problem tru9; a complete history for Loraine and the last 30 iterations for PDAL. While the history differs for the two algorithms, the condition numbers are very low and, in case of PDAL, decrease when converging to the solution. For this problem, we also computed the values of $\underline{\varepsilon}$ and $\overline{\varepsilon}$ from \cref{th:precond1} and found that the estimate of the condition number in this theorem was always sharp. \cref{fig:tr_IP_PDAL_cond}--right depicts all eigenvalues of the preconditioned matrix in the last iteration. Most of the eigenvalues are very close or equal to one and the matrices have only a few outlying eigenvalues, albeit still not far from one.

We now present results for all tru and vib problems using Loraine and PDAL. These are shown in \cref{tab:tru,tab:vib}. The tables present the number of iterations of the respective algorithm, the total number of CG iterations using the hybrid preconditioner $H_{\rm hyb}$ in case of Loraine and $H_\alpha$ in case of PDAL. This is followed by CPU time spent in the optimization solver and time per iteration. Notice that PDAL solves one linear system per iteration, while Loraine solves two (predictor and corrector).
For all problems we set the expected rank of the dual solutions $k=1$.
\begin{table}[htbp]
  \label{tab:tru}%
  \centering
  {\footnotesize\caption{Loraine and PDAL in tru$*$ problems}
    \begin{tabular}{l|rrrr|rrrr}        \toprule                                   
    \multicolumn{1}{r}{} & \multicolumn{4}{|c|}{Loraine} & \multicolumn{4}{c}{PDAL}   \\
    \multicolumn{1}{r}{} & \multicolumn{1}{|r}{iter} & \multicolumn{1}{r}{CG iter} & \multicolumn{1}{r}{CPU} & \multicolumn{1}{r}{CPU/iter} & \multicolumn{1}{|r}{iter} & \multicolumn{1}{r}{CG iter} & \multicolumn{1}{r}{CPU} & \multicolumn{1}{r}{CPU/iter}   \\\midrule
    tru3  & 16    & 122   & 0.01  & 0.00  & 35    & 107   & 0.04  & 0.00    \\
    tru5  & 21    & 190   & 0.08  & 0.00  & 61    & 357   & 0.28  & 0.00    \\
    tru7  & 27    & 236   & 0.24  & 0.01  & 62    & 310   & 0.54  & 0.01    \\
    tru9  & 31    & 333   & 0.65  & 0.02  & 69    & 374   & 1.5  & 0.02    \\
    tru11 & 36    & 370   & 1.6  & 0.04  & 78    & 435   & 3.4   & 0.04    \\
    tru13 & 45    & 500   & 4.5   & 0.10  & 102   & 818   & 10    & 0.10    \\
    tru15 & 52    & 882   & 11  & 0.22  & 97    & 670   & 20    & 0.21    \\
    tru17 & 53    & 980   & 24    & 0.45  & 96    & 744   & 39    & 0.41    \\
    tru19 & 64    & 1310  & 51    & 0.80  & 101   & 808   & 71    & 0.70    \\
    tru21 & 65    & 1325  & 84    & 1.29  & 111   & 965   & 151   & 1.36    \\
    tru23 & 72    & 2450  & 189   & 2.63  & 112   & 1019  & 270   & 2.41    \\
    tru25 & 87    & 2148  & 317   & 3.64  & 142   & 1929  & 634   & 4.46    \\
    \bottomrule
    \end{tabular}%
  }
\end{table}%
\begin{table}[htbp]
  \label{tab:vib}%
  \centering
  {\footnotesize\caption{Loraine and PDAL in vib$*$ problems}
    \begin{tabular}{l|rrrr|rrrr}        \toprule                                   
    \multicolumn{1}{r}{} & \multicolumn{4}{|c|}{Loraine} & \multicolumn{4}{c}{PDAL}   \\
    \multicolumn{1}{r}{} & \multicolumn{1}{|r}{iter} & \multicolumn{1}{r}{CG iter} & \multicolumn{1}{r}{CPU} & \multicolumn{1}{r}{CPU/iter} & \multicolumn{1}{|r}{iter} & \multicolumn{1}{r}{CG iter} & \multicolumn{1}{r}{CPU} & \multicolumn{1}{r}{CPU/iter}   \\\midrule
    vib3  & 20    & 209   & 0.04  & 0.00  & 66    & 251   & 0.2   & 0.00  \\
    vib5  & 31    & 411   & 0.21  & 0.01  & 69    & 434   & 0.45  & 0.01  \\
    vib7  & 39    & 501   & 0.67  & 0.02  & 69    & 372   & 0.91  & 0.01  \\
    vib9  & 47    & 663   & 2.2  & 0.05  & 82    & 473   & 3.0     & 0.04  \\
    vib11 & 59    & 995   & 6.1  & 0.10  & 99    & 464   & 7.6   & 0.08  \\
    vib13 & 69    & 1153  & 16    & 0.23  & 112   & 508   & 18    & 0.16  \\
    vib15 & 80    & 1480  & 37    & 0.46  & 131   & 534   & 44    & 0.34  \\
    vib17 & 94    & 1781  & 92    & 0.98  & 130   & 501   & 91    & 0.70  \\
    vib19 & 108   & 2116  & 189   & 1.75  & 145   & 536   & 177   & 1.22  \\
    vib21 & 120   & 2844  & 401   & 3.34  & 173   & 723   & 456   & 2.64  \\
    vib23 & 130   & 2720  & 718   & 5.52  & 193   & 799   & 961   & 4.98  \\
    vib25 & 143   & 3752  & 1321  & 9.24  & 205   & 808   & 1724  & 8.41  \\
    \bottomrule
    \end{tabular}%
  }
\end{table}%

The following \cref{fig:tr_IP_PDAL,fig:vib_IP_PDAL,fig:tr_IP_PDAL_iter,fig:vib_IP_PDAL_iter} depict graphs of CPU times versus the number of variables for all four groups of problems, presented in the log-log scale. We can see that, while PDAL needs more iterations, the times per iteration are almost identical for the two codes. In particular, the time grows almost linearly (factor $\sim$1.25) with the size of the problem.
	\begin{figure}[tbhp]
	\begin{center}
		\resizebox{0.45\hsize}{!}
		{\includegraphics{{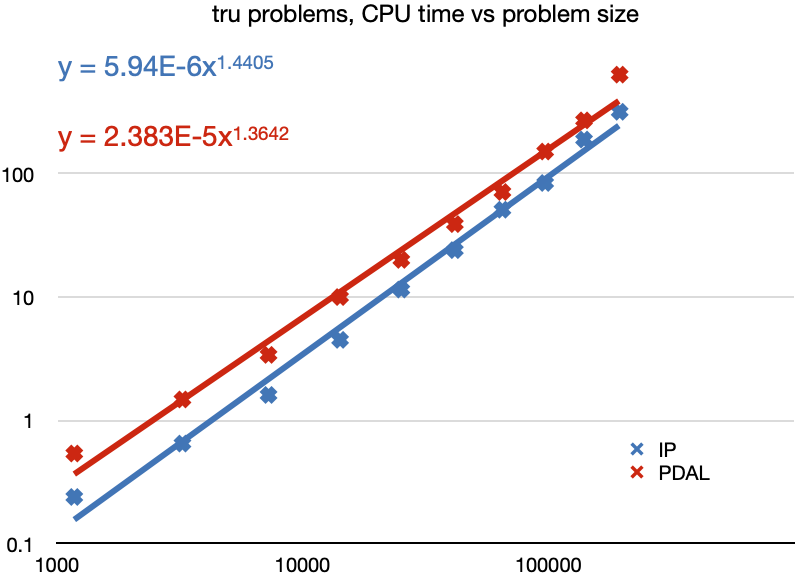}}}
				\resizebox{0.45\hsize}{!}
		{\includegraphics{{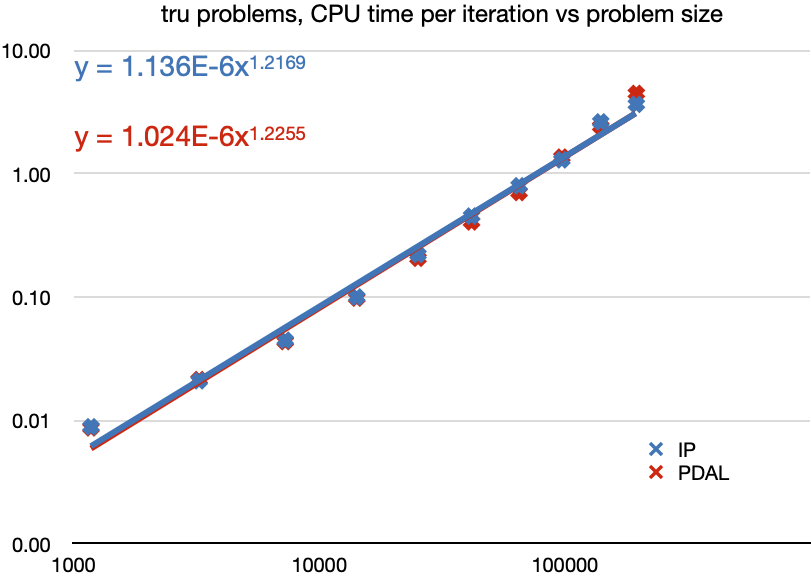}}}
	\end{center}
	\caption{Loraine (blue) and PDAL (red) in tru$*$ problems. CPU time vs number of variables in log-log scale.}
	\label{fig:tr_IP_PDAL}
	\end{figure}
	\begin{figure}[tbhp]
	\begin{center}
		\resizebox{0.45\hsize}{!}
		{\includegraphics{{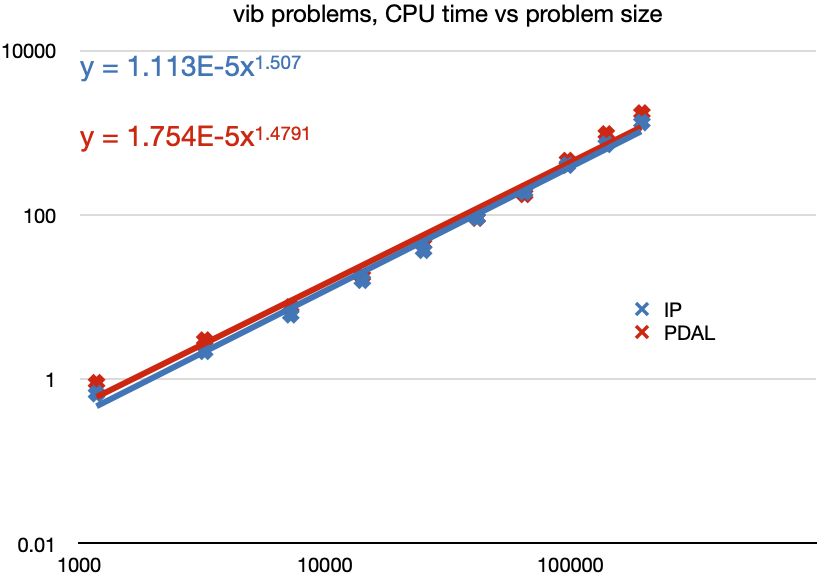}}}
				\resizebox{0.45\hsize}{!}
		{\includegraphics{{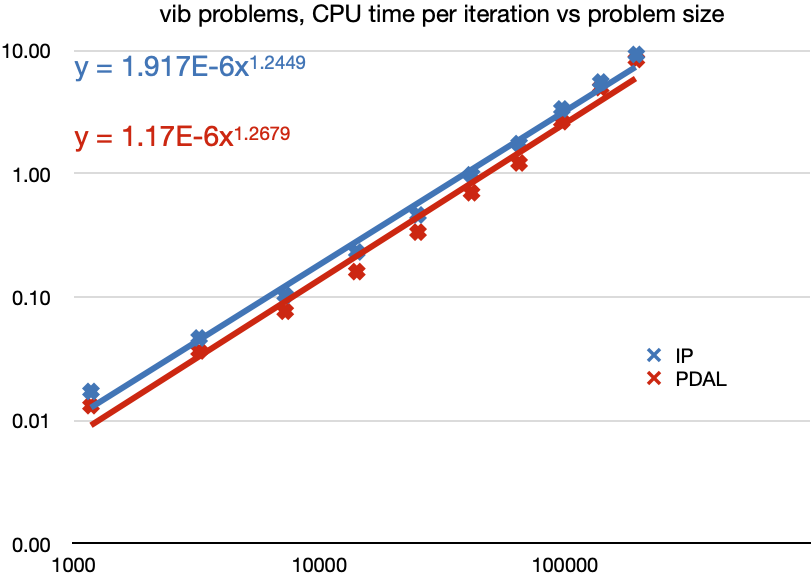}}}
	\end{center}
	\caption{Loraine (blue) and PDAL (red) in vib$*$ problems. CPU time vs number of variables in log-log scale.}
	\label{fig:vib_IP_PDAL}
	\end{figure}
	\begin{figure}[tbhp]
	\begin{center}
		\resizebox{0.45\hsize}{!}
		{\includegraphics{{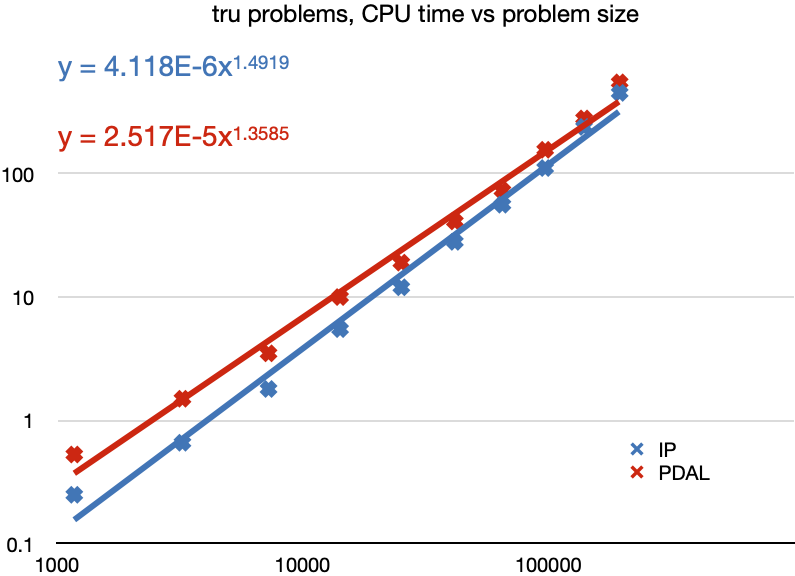}}}
				\resizebox{0.45\hsize}{!}
		{\includegraphics{{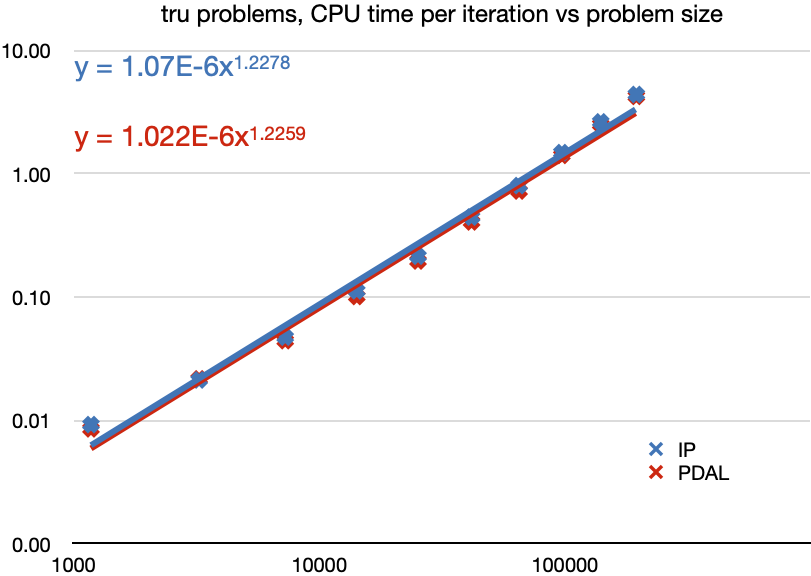}}}
	\end{center}
	\caption{Loraine (blue) and PDAL (red) in tru$*$e problems. CPU time vs number of variables in log-log scale.}
	\label{fig:tr_IP_PDAL_iter}
	\end{figure}
	\begin{figure}[tbhp]
	\begin{center}
		\resizebox{0.45\hsize}{!}
		{\includegraphics{{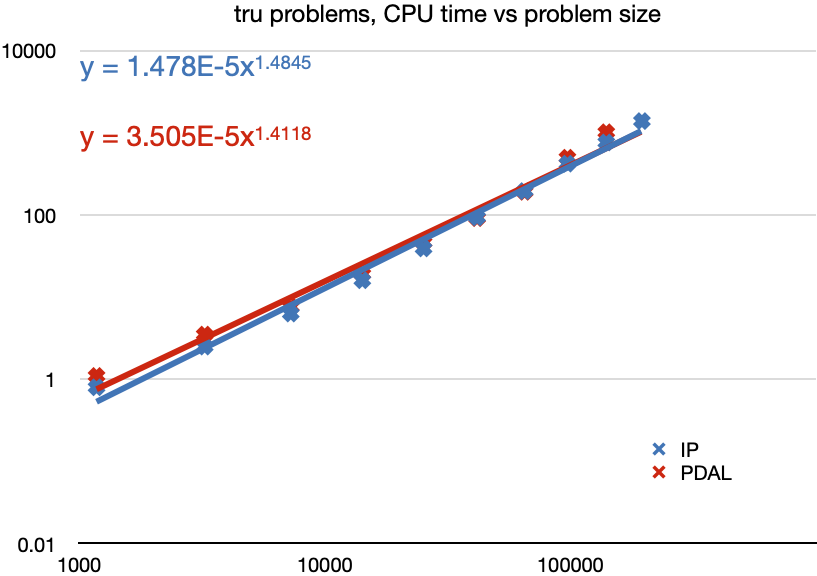}}}
				\resizebox{0.45\hsize}{!}
		{\includegraphics{{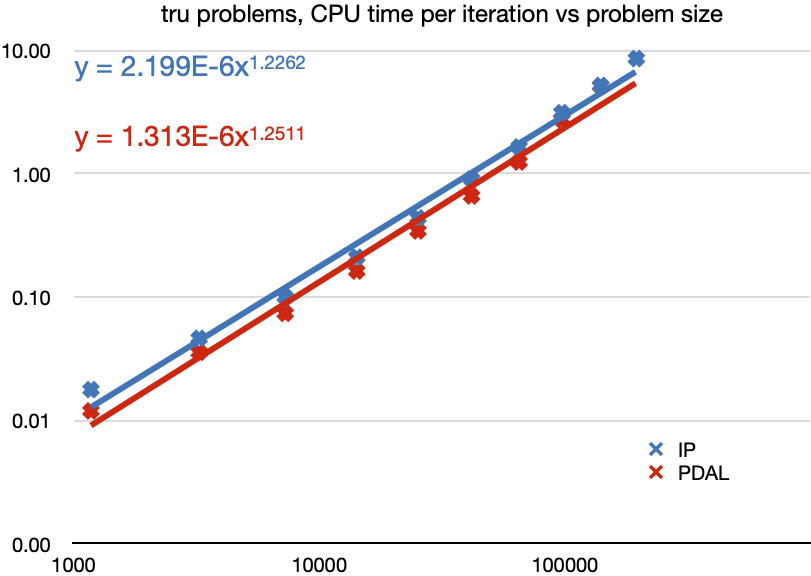}}}
	\end{center}
	\caption{Loraine (blue) and PDAL (red) in vib$*$e problems. CPU time vs number of variables in log-log scale.}
	\label{fig:vib_IP_PDAL_iter}
	\end{figure}


\subsection{Other software}
In this section we give a brief comparison Loraine and PDAL with other SDP software. First we present a comparison of Loraine, our IP implementation, with the solver MOSEK \cite{mosek} that is also based on an interior-point algorithm. While this comparison may seem unfair, as MOSEK uses a direct solver to solve the linear systems, we believe that it gives a good perspective of the clear advantage of iterative solvers with good preconditioners, when available and when applicable.
The results are presented in \cref{tab:mosek}; we again give the number of iterations of the solver, CPU time and CPU time per one iteration. The table also shows the speed-up with respect to MOSEK and speed-up per iteration. We only present results for problems tru3--tru17; for the larger problems MOSEK exceeded the available 40GB RAM.
%
\begin{table}[htbp]
  \centering
  {\footnotesize\caption{Loraine and MOSEK in tru$*$ problems}
    \begin{tabular}{l|rrr|rrrrrr}
    \toprule
    \multicolumn{1}{r}{} & \multicolumn{3}{|c|}{Mosek} & \multicolumn{6}{c}{Loraine} \\
   \multicolumn{1}{r}{problem} & \multicolumn{1}{|r}{iter} & \multicolumn{1}{r}{CPU} & \multicolumn{1}{r|}{CPU/iter} & \multicolumn{1}{r}{iter} & \multicolumn{1}{r}{CG iter} & \multicolumn{1}{r}{CPU} & \multicolumn{1}{r}{CPU/iter} & \multicolumn{1}{r}{speed-up} & \multicolumn{1}{r}{s-up/iter} \\\midrule
    tru3  & 14    & 0.22  & 0.02  & 16    & 122   & 0.01  & 0.00  & 22 & 25 \\
    tru5  & 15    & 0.23  & 0.02  & 21    & 190   & 0.08  & 0.00  & 2.9  & 4.0 \\
    tru7  & 17    & 0.78  & 0.05  & 27    & 236   & 0.24  & 0.01  & 3.3  & 5.2 \\
    tru9  & 20    & 5.6   & 0.28  & 31    & 333   & 0.65  & 0.02  & 8.6  & 13 \\
    tru11 & 26    & 44    & 1.69  & 36    & 370   & 1.61  & 0.04  & 27 & 37 \\
    tru13 & 29    & 235   & 8.10  & 45    & 500   & 4.5   & 0.10  & 52 & 81 \\
    tru15 & 32    & 1079  & 33.72 & 52    & 882   & 11.5  & 0.22  & 93 & 152 \\
    tru17 & 37    & 4671  & 126.24 & 53    & 980   & 24    & 0.45  & 194 & 278 \\
    \bottomrule
    \end{tabular}%
  \label{tab:mosek}%
  }
\end{table}%

In \cref{fig:mosek} we give a comparison of the CPU time per iteration. While MOSEK actually exceeds the theoretical complexity with coefficient 2.2, the complexity of Loraine is almost linear for these problems.
	\begin{figure}[tbhp]
	\begin{center}
		\resizebox{0.5\hsize}{!}
		{\includegraphics{{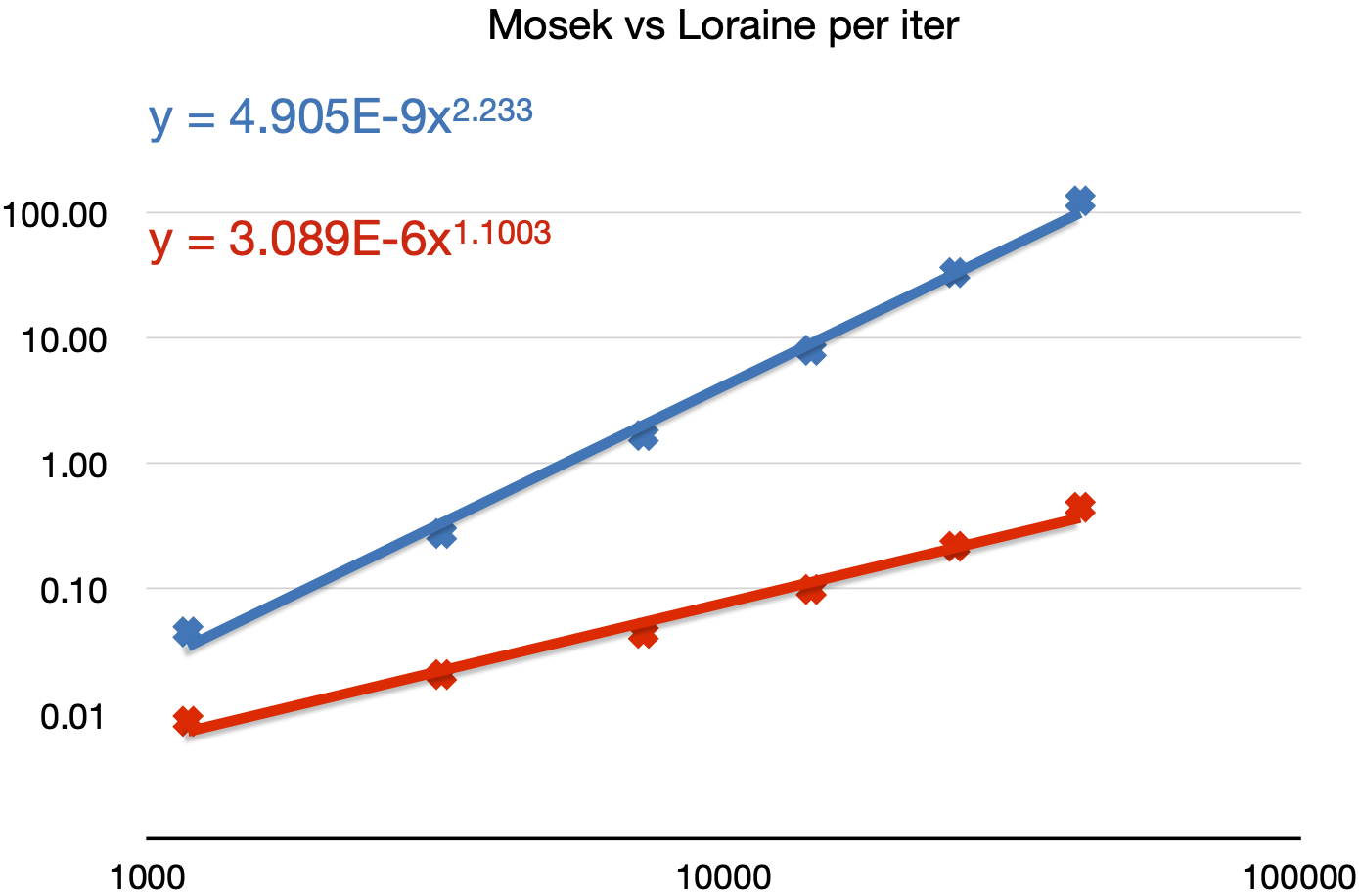}}}
	\end{center}
	\caption{Loraine (red) and MOSEK (blue) in tru$*$ problems. CPU time per iteration vs number of variables in log-log scale.}
	\label{fig:mosek}
	\end{figure}

The next \cref{tab:other} gives results of Loraine with the preconditioner ${H_\alpha}$ compared with other solvers that could potentially benefit from the low rank information: Loraine with the preconditioner $\widetilde{H}$ by Zhang and Lavaei \cite{Zhang_2017}; SDPNAL+ \cite{sdpnal}, an implementation of semismooth Newton-CG augmented Lagrangian method; and SDPLR \cite{burer-monteiro} using a nonlinear programming algorithm based on low-rank factorization of the variables. SDPNAL+ and SDPLR were used with default stopping tolerance. None of these codes can solves problems bigger than tru11 before reaching the maximum number of iterations, either internal or global. While Loraine solutions satisfy DIMACS criteria with $10^{-5}$, other codes often finish with lower precision, as documented by the final value of the objective function.
\begin{table}[htbp]
  \centering
  {\footnotesize\caption{Other solvers}
    \begin{tabular}{l|rrr|rrr|rr|rr}\toprule
    \multicolumn{1}{r}{} & \multicolumn{3}{|c}{{Loraine with ${H_\alpha}$}}  & \multicolumn{3}{|c}{{Loraine with $\widetilde{H}$}}
    & \multicolumn{2}{|c}{{SDPNAL+}} & \multicolumn{2}{|c}{{SDPLR}} \\
    \multicolumn{1}{r}{} & \multicolumn{1}{|r}{{iter}} & \multicolumn{1}{r}{{CG iter}} & \multicolumn{1}{r}{{CPU}} & 
    \multicolumn{1}{|r}{{iter}} & \multicolumn{1}{r}{{CG iter}} & \multicolumn{1}{r}{{CPU}} &  \multicolumn{1}{|r}{{iter}} & \multicolumn{1}{r}{{CPU}} &  \multicolumn{1}{|r}{{iter}} & \multicolumn{1}{r}{{CPU}}  \\\midrule
    {tru3}  & 16 & 122 & 0.01  & 16    & 131   & 0.02   & 440   & 0.5    & 15    & 0.01   \\
    {tru5}  & 21 & 190 & 0.08  & 21    & 197   & 0.3    & 9674  & 29     & 18    & 1.7    \\
    {tru7}  & 27 & 237 & 0.27  & 27    & 348   & 3.7    & 6324  & 55     & 19    & 21     \\
    {tru9}  & 31 & 331 & 0.94  & 31    & 570   & 42     & 47579 & 468    & 20    & 179    \\
    {tru11} & 36 & 344 & 2.62  & 36    & 625   & 303    & \multicolumn{2}{c|}{maxit} & 22    & 2469  \\
    \bottomrule
    \end{tabular}%
  \label{tab:other}%
  }
\end{table}%

\subsection{No preconditioner}
Recall the two contributing factors to the efficiency of the presented methods: a) the dedicated preconditioner and b) the very fact of using an iterative method for the solution of linear systems, a method requiring only matrix-vector multiplications, without the need to assemble and store the actual matrix. While our main focus was on the preconditioner, the second factor is just as important. Indeed, it turns out that for several problem classes, no preconditioner is actually needed; typically for the matrix completion problems used in \cite{bellavia,Zhang_2017} or problems resulting form relaxation of combinatorial optimization problems. The typical feature of these problems is the lack of linear constraints and the fact that the matrix $\BAT\BA$ is diagonal. In such a case, our $H_\alpha$ is almost identical to $\widetilde{H}$ of \cite{Zhang_2017}. 

For example, when solving problem \verb|gpp500-1| from SDPLIB \cite{sdplib} (solution of rank~6), Loraine needed 288 total CG steps (2 in the last IP iteration) and 96 seconds to solve the problem using $H_\alpha$ and 4045 CG steps (539 in the last IP iteration) and (only) 14 seconds to solve the problem without preconditioner. 
In another example, we solved the matrix completion problem (\cite{bellavia,Zhang_2017}) with $n=36000$, $m=1200$ and $\rank X^*=1$; in this case Loraine needed 580 total CG steps and 53 seconds to solve the problem using $H_\alpha$ and 1108 CG steps and 48 seconds to solve the problem without preconditioner.

To demonstrate that this is \emph{not} the case of the truss topology problems, we include \cref{tab:noprecon}. It presents results of Loraine with no preconditioner for problems that are still solvable before reaching the maximum number of CG iteration in one linear system, set to 100000. We clearly see the advantage of using the preconditioner; confront~\cref{tab:tru,tab:vib}.
\begin{table}[htbp]
  \centering
  {\footnotesize\caption{Loraine with no preconditioner}
    \begin{tabular}{lrrr|lrrr}\toprule
    \multicolumn{1}{r}{} & \multicolumn{1}{r}{iter} & \multicolumn{1}{r}{CG iter} & \multicolumn{1}{r|}{CPU} &       & \multicolumn{1}{r}{iter} & \multicolumn{1}{r}{CG iter} & \multicolumn{1}{r}{CPU}  \\\midrule
    tru3  & 16    & 346   & 0.01    & {vib3} & 21    & 3115  & 0.06   \\
    tru5  & 21    & 2385  & 0.15    & {vib5} & 31    & 46949 & 2.76   \\
    tru7  & 27    & 10091 & 0.97   & {vib7} & 40    & 166135 & 24.9   \\
    tru9  & 30    & 33163 & 7.3    & {vib9} & 48    & 432416 & 181    \\
    tru11 & 36    & 77700 & 45     &       &       &       &         \\
    \bottomrule
    \end{tabular}%
  \label{tab:noprecon}%
  }
\end{table}%


\section{Conclusions}
We have introduced a preconditioner for the CG method that can be used in two different second-order algorithms for linear SDP. The preconditioner utilizes low rank of the SDP solutions. We have demonstrated that it is extremely efficient---as a preconditioner---for SDP problems arising from structural optimization.

How do the two optimization algorithms, PDAL and IP, compare? The interior-point algorithm needs, in general, less overall iterations to reach solution of the same quality. This confirms the observation from \cite{MKMS}. On the other hand, PDAL exhibits better conditioning of the system matrices when converging to the solution. This is particularly important when using iterative solvers; see \cref{fig:tr_IP_PDAL_cond}. So, while Loraine is ``faster'', in average, PDAL is capable of reaching higher accuracy of solutions, when required.

While we tested our software for other problems from standard libraries, we do not report it here for one of the following reasons:
\begin{enumerate}
    \item The available collections, such as \cite{sdplib}, are not representative enough in the sense that they do not include enough problems of the same type. We did not want to pick up single problems.
    \item Our algorithms benefit from the use of iterative solver for the linear system but no preconditioner is really needed. This is the case of matrix completion problems \cite{bellavia,Zhang_2017} or some problems arising in relaxation of combinatorial optimization.
    \item The problems do not satisfy our dimensional assumption $n\gg m$. For these problems the preconditioner is still very efficient (assuming low rank solutions) but the use of iterative methods does not bring any significant advantage to direct solvers. This is the case of many problems found in SDPLIB.
\end{enumerate}

Our next goal is to generalize the algorithms to SDP problems with low-rank solutions and free variables, with the goal to solve large-scale problems arising in polynomial optimization.






\bibliographystyle{siam}
\bibliography{bibliography}

\end{document}